\def\argmin{\operatornamewithlimits{arg\ min}}
\newtheorem{remark}{Remark}
\newtheorem{algorithm}{Algorithm}
\newtheorem{Defi}{Definition}
\def\NN{\hbox{\rlap{I}\kern.16em N}}
\def\NC{\hbox{\rlap{\kern.24em\raise.1ex\hbox
                  {\vrule height1.3ex width.9pt}}C}}
\def\LL{{\cal{L}}}
\def\pT{{\partial T}}
\def\bR{\mathbb{R}}
\begin{document}

\setlength{\parindent}{0.25in} \setlength{\parskip}{0.08in}

\title{An $L^p$-Weak Galerkin Method for second order elliptic equations in non-divergence form}

\author{
Waixiang Cao\thanks{School of Mathematical Sciences, Beijing Normal University,  Beijing, 100875, China. The research of W. Cao was supported in part by National Natural Science Foundation of China grant  11871106.}
\and
Junping Wang\thanks{Division of Mathematical
Sciences, National Science Foundation, Alexandria, VA 22314
(jwang@nsf.gov). The research of J. Wang was supported in part by the
NSF IR/D program, while working at National Science Foundation.
However, any opinion, finding, and conclusions or recommendations
expressed in this material are those of the author and do not
necessarily reflect the views of the National Science Foundation.}
\and Yuesheng Xu\thanks{Department of Mathematics and Statistics, Old Dominion University, Norfolk VA 23529 (y1xu@odu.edu). The research of Y. Xu was supported in part by the NSF under
Grant DMS-1912958.}
}

\maketitle

\begin{abstract}
This article presents a new primal-dual weak Galerkin method for second order elliptic equations in
non-divergence form. The new method is devised as a constrained $L^p$-optimization problem with constraints that mimic the second order elliptic equation by using the discrete weak Hessian locally on each element. An equivalent min-max characterization is derived to show the existence and uniqueness of the numerical solution. Optimal order error estimates are established for the numerical solution under the discrete $W^{2,p}$ norm,  as well as the standard $W^{1,p}$ and $L^p$ norms. An equivalent characterization of the optimization problem in term of a system of fixed-point equations via the proximity operator is presented. An iterative algorithm is designed based on the fixed-point equations to solve the optimization problems. Implementation of the iterative algorithm is studied and convergence of the iterative algorithm is established. Numerical experiments for both smooth and non-smooth coefficients problems are presented to verify the theoretical findings.
\end{abstract}

\begin{keywords}
Primal-dual weak Galerkin method, finite element method, $L^p$-optimization, convergence analysis, weak Hessian operator, $L^p$ stabilizer, non-divergence form, fixed-point proximity algorithm.
\end{keywords}

\begin{AMS}
 65N30, 65N12, 65N15, 35J15, 35B45.
\end{AMS}

\section{Introduction}
This paper is concerned with the development of new numerical methods for second-order elliptic problems in non-divergence form. For simplicity, we consider the model problem that seeks an unknown function $u=u(x)$ such that 
\begin{eqnarray}\label{poission}
\begin{aligned}
   {\cal L}u:=\, \sum_{i,j=1}^da_{ij} \partial_{ij}^2 u&=f,  &&  {\text in} \ \Omega, \\
    u&=0,\ \  && {\text on} \ \partial\Omega,
\end{aligned}
\end{eqnarray}
where  $\Omega$ is a polygonal or polyhedral domain, $a_{ij}\in L^\infty(\Omega)$, and $f\in L^p(\Omega)$ is a given function with $p\in [1,\infty)$. Assume that  the coefficient matrix $a(x)=(a_{ij}(x))_{d\times d}$, $x\in \Omega$, is
symmetric, uniformly bounded and positive definite; i.e., there exist positive constants $\alpha$ and $\beta$ such that
 \[
     \alpha\xi^T\xi\le \xi^Ta(x)\xi\le  \beta\xi^T\xi,\ \ \forall \xi\in {\mathbb R}^d, \qquad \forall x\in\Omega.
\]

Equations in non-divergence form, such as \eqref{poission},  have attracted much attention recently
due to its wide application in many science and engineering problems such as the probability and stochastic process \cite{Fleming-Soner}, the linearized fully nonlinear problems \cite{Brenner-Sung,Neilan}, and the fully nonlinear Hamilton-Jacobi-Bellman equations (see, e.g., \cite{Jensen-Smears}).
The features of low regularity on the coefficient matrix and the non-divergence structure make the problem \eqref{poission} hard to admit any traditional variational formulation amenable to the use of standard Galerkin finite element techniques in numerical approximation.  Recently, several numerical methods based on Galerkin type methods have been developed (see, e.g., \cite{Feng-Neilan,Lakkis,Nochetto,Smears}). In \cite{Lakkis},  Lakkis and Pryer proposed a new mixed finite element method for solving the problem \eqref{poission} by constructing  an appropriate {\it finite element Hessian}. Later, Neilan \cite{Neilan2017} modified  the definition of the finite element Hessian rendering the auxiliary variable completely local, thus resulting in a more efficient scheme.  In addition,  error estimates in a discrete $H^2$-norm were derived. In \cite{WW-mathcomp}, a primal-dual weak Galerkin method was introduced to approximate the solution of the problem \eqref{poission}. The basic idea of the primal-dual weak Galerkin is to use the weak Galerkin strategy (see, e.g., \cite{mu-wang-ye-2015,Wang:YeWG2013,wang-ye-2014,wang-ye-2015})  to construct a {\it discrete weak Hessian} operator, and then seek a discontinuous function which satisfies the given PDE
\eqref{poission} weakly on each element (i.e., replace the operator ${\cal L}$ by  the specially designed discrete weak Hessian operator), plus minimizing the $L^2$ stabilizer on the boundary of each element. The constrained optimization problem gives rise to a symmetric linear system involving not only the primal variable $u$ but also a dual variable (say $\lambda$) known as the Lagrangian multiplier. Optimal order error estimates were derived for the finite element approximation in the discrete $H^2$, as well as the standard $H^1$ and
$L^2$-norms.

The goal of this paper is to present a new primal-dual weak Galerkin method by using an $L^p$ stabilizer for the problem \eqref{poission}, and then establish a general $L^p$ theory for the corresponding numerical method.  To our best knowledge, there are no existing results of $L^p$ theory for weak Galerkin finite element methods in the literature, and this paper is the first along the direction of $L^p$.
Different from the method in \cite{WW-mathcomp}, here our numerical scheme is formulated as a constrained $L^p$ optimization problem with constraints that satisfy the PDE \eqref{poission} weakly on each element. To study the convergence behavior of the numerical solutions arising from the constrained $L^p$ optimization, we shall present an equivalent Euler-Lagrange form and a min-max characterization for the numerical scheme.  Based on the assumption that the problem \eqref{poission} has the $W^{2,p}$ regularity (that is, the solution to  $\LL \varphi = f,\ \varphi|_{\partial\Omega} = 0$ is $W^{2,p}$ regular and satisfies
$
\|\varphi\|_{2,p} \leq C \|f\|_{0,p}
$ ),
we  establish optimal order error estimates  for the finite element approximation in the discrete $W^{2,p}$, as well as the standard $W^{1,p}$- and $L^p$-norms. 
In addition,  an iterative algorithm and its detailed implementation are developed for solving the constrained nonlinear $L^p$ optimization problem numerically.
In particular, when $p=1$ we need to solve a minimization problem with non-differentiable objective functions, for which standard optimization methods are not applicable. Inspired by recent development in image science \cite{krol-li-shen-xu, li-shen-xu-zhang, li-Micchelli-sheng-xu, Micchelli-Shen-Xu}, we design a fixed-point iterative algorithm via the proximity operator of the non-differentiable functions that appear in its objective function. We establish a convergence result for the iterative algorithm and study crucial issues in implementation of the algorithm.

We would like to emphasize that the current  work is our first attempt to relate the new primal-dual weak Galerkin method to a constrained $L^p$ optimization problem with $p\neq 2$,  where error estimates, iterative algorithm and its convergence and implementation are of major interests and study. Analysis of  other important advantages and features of the $L^p$ optimization problem (e.g., using $p=1$ and wavelet basis approximation may yield sparse numerical solutions) is equally interesting and challenging,  and thus deserves a separate investigation.

The rest of the paper is organized as follows. In Section 2, we review some preliminary results and introduce basic notations. In Section 3, we present the numerical scheme based on constrained $L^p$ optimizations.  In Section 4, we shall rewrite the numerical scheme into
an equivalent min-max characterization. Section 5 is devoted to a discussion of the solution existence and uniquness for the constrained $L^p$ optimization problem. Section 6 is the most technical and main body of this paper, where optimal order error estimates are established for the primal variable in the discrete $W^{2,p}$, $W^{1,p}$, and $L^p$ norms. Section 6 additionally derives an $L^q$-estimate for the Lagrangian multiplier.  In Section 7, we present a characterization of the optimization problem in term of a system of fixed-point equations via proximity operator and design an iterative algorithm to solve the nonlinear optimization problems numerically. The convergence of the iterative algorithm is studied in Section 8.  In Section 9, we demonstrate how to implement the iterative algorithm and calculate the proximity operator. Finally,  some numerical examples are presented in Section 10 to support our theoretical findings.

\section{Notations and Preliminaries}
We adopt the standard notation for Sobolev spaces such as $W^{m,p}(D)$ on subdomain $D\subset\Omega$ equipped with the norm $\|\cdot\|_{m,p,D}$ and the semi-norm $|\cdot|_{m,p,D}$. When $D=\Omega$, we shall omit the index $D$ in the norm notation. For $p=2$, we set
$H^m(D)=W^{m,p}(D)$, $\|\cdot\|_{m,D}=\|\cdot\|_{m,p,D}$, and $|\cdot|_{m,D}=|\cdot|_{m,p,D}$.

\begin{Defi}
By a classical solution  of \eqref{poission}, we mean a function $u\in W^{2,p}(\Omega)\cap W_0^{1,p}(\Omega)$ satisfying ${\cal L}u=f$ a.e. in $\Omega$.
\end{Defi}

Throughtout this paper, we assume the elliptic operator
${\cal L}=\sum_{i,j=1}^da_{ij}\partial_{ij}^2$, together with the usual Dirichlet boundary condition, have the $W^{2,p}$-regularity in the sense that the solution to
\[
    {\cal L}\psi=f,\ \ \psi |_{\partial\Omega}=0
\]
  is $W^{2,p}$-regular and satisfies the  following a priori estimate
 \[
    \|\psi\|_{2,p}\le C \|f\|_{0,p}.
 \]
  Consequently, the following {\it inf-sup} condition holds true
 \[
     \sup\limits_{\psi\in W^{2,p}\cap W^{1,p}_0}\frac{|({\cal L}\psi,w)|}{\|\psi\|_{2,p}}\ge \beta \|w\|_{0,q},
 \]
where $q=p/(p-1)$ is the conjugate of $p$.
 
Let ${\cal T}_h$ be a partition of the domain $\Omega$, and denote by ${\cal E}_h$ the set of all edges in ${\cal T}_h$, and ${\cal E}^0_h$ the set of all interior edges.
For each element $T\in {\cal T}_h$, we denote by $h_{T}$ the diameter and the mesh size $h=\max_{T \in {\cal T}_h} h_{T}$ for ${\cal T}_h$.

 Let $K$ be a polygonal or polyhedral domain with boundary $\partial K$. By a weak function on $K$, we mean a triplet $v=\{v_0,v_b, {\bf v}_g\}$ such that
 $v_0\in L^2(K), v_b\in L^2(\partial K)$ and  ${\bf v}_g=(v_{g1},\ldots, v_{gd})\in [L^2(\partial K)]^d$. Here $v_0$ and $v_b$ can be understood as the value of $v$ in the interior
 and on the boundary, while ${\bf v}_g$ represents the gradient $\nabla v$ on the boundary. Note that $v_0$ and ${\bf v}_g$ are not necessarily the trace of $v_0$ and $\nabla v_0$ on $\partial K$, respectively. Denote by
 \[
    W(K):=\{v=\{v_0,v_b, {\bf v}_g\}: \ v_0\in L^2(K), v_b\in L^2(\partial K), {\bf v}_g\in [L^2(\partial K)]^d\}
 \]
the space of all weak functions on $K$. Recall that weak Hessian is a $d\times d$ matrix consisting of weak second order partial derivatives $\partial_{ij,w}^2v$ defined as a linear functional on the Sobolev space $H^2(K)$ so that its action on $\psi\in H^2(K)$ is given by
 \begin{equation}
   (\partial_{ij,w}^2v,\psi)_K:=(v_0,\partial_{ji}^2\psi)_K-\langle v_bn_i, \partial_j\psi\rangle_{\partial K}+\langle v_{gi},\psi n_j\rangle_{\partial K}.
 \end{equation}
   Here  ${\bf n}=(n_1,\ldots, n_d)$ denotes the unit outward normal direction on $\partial K$, and
    for any function $v,\psi$,
\[
 (v,\psi)_K:=\int_{K} v\psi,\ \  \langle v,\psi\rangle_{\partial K}:=\int_{\partial K} v\psi ds.
\]
It follows that the weak Hessian of $v\in W(K)$ is defined as follows:
 \[
      \nabla_{w} v:=(\partial^2_{ij,w}v)_{i,j=1}^d.
 \]
  
Analogously, we can define the weak second order partial derivatives and weak Hessian on any finite dimensional linear spaces. Specifically, for a given finite dimensional space $S_r(K)$ on $K$, the weak second order partial derivative  on $S_r(K)$ is defined as a function  $\partial_{ij,w}^2v\in S_r(K)$ satisfying
\begin{equation}\label{weak:deri}
   (\partial_{ij,w}^2v,\psi)_K=(v_0,\partial_{ji}^2\psi)_K-\langle v_bn_i, \partial_j\psi\rangle_{\partial K}+\langle v_{gi},\psi n_j\rangle_{\partial K},\ \ \forall \psi\in S_r(K).
 \end{equation}

\section{Numerical Scheme}
For any given integer $k\ge 2$, we denote by $V_h$ the subspace of $W(K)$ consisting of piecewise polynomials in the following form
\begin{equation}\label{Vh}
   V_h:=\{v=\{v_0,v_b, {\bf v}_g\}:\ v|_{T}\in P_k(T)\times P_k(e)\times [P_{k-1}(e)]^d,\ e\in \partial T, T\in {\cal T}_h \}.
\end{equation}
  Denote by $V_h^0$ the subspace of $V_h$ with vanishing boundary value for $v_b$ on the boundary $\partial\Omega$. That is,
\begin{equation}\label{vh0}
    V^0_h:=\{v=\{v_0,v_b, {\bf v}_g\}\in V_h:  v_b|_{e}=0,\ \ e\in\partial\Omega \}.
\end{equation}
Let $W_h$ be a linear space of polynomials satisfying
  \begin{equation}
     W_h:=\{\sigma: \sigma|_{T}\in P_{l}, l=k-2,  \ {\rm or} \ k-1,\ \  T \in{\cal T}_h\}.
  \end{equation}
 
Let the weak differential operator ${\cal L}_w$ be given by
$$
{\cal L}_w: = \sum_{i,j=1}^d a_{ij} \partial_{ij,w}^2,
$$
where $\partial_{ij,w}^2$ is the weak second order partial derivative operator defined in \eqref{weak:deri} with $S_r=W_h$.
Introduce the following $L^p$ stabilizer for any $p\ge 1$:
 \begin{eqnarray*}
   s(v):&=&\left\{
   \begin{array}{ll}\displaystyle
   \frac{1}{p} \sum_{T\in{\cal T}_h}\int_{\partial T}\left(h_T^{1-2p}|v_0-v_b|^p+h^{1-p}_T|\nabla v_0-{\bf v}_g|^p\right)ds,& p\in [1,\infty), \vspace{1mm} \\
      \displaystyle\sup\limits_{T\in{\cal T}_h}\left(  h_T^{-2}\|v_0-v_b\|_{0,\infty,\partial T}+ h_T^{-1} \|\nabla v_0-{\bf v}_g\|_{0,\infty,\partial T}\right), & p=\infty.
   \end{array}\right.
\end{eqnarray*}
 Here $v=\{v_0, v_b, {\bf v}_g\}$ stands for weak finite element functions defined on the
finite element partition ${\cal T}_h$, and $h_e$ denotes the length of  the edge $e\in {\cal E}_h$.
We are now in a position to present a numerical scheme for \eqref{poission} by following the usual primal-dual weak Galerkin finite element method with the new $L^p$ stabilizer.

\begin{algorithm}[$L^p$-Weak Galerkin]
The second-order elliptic equation \eqref{poission} can be
discretized as a constrained optimization problem by seeking $u_h\in S_h$
such that
\begin{equation}\label{min:uh-new}
   u_h={\argmin}_{v\in S_h} s(v),
\end{equation}
where $S_h$ is the set of admissible functions
\begin{equation}\label{sh}
    S_h:=\{v\in V^0_h: \ ({\cal L}_w v, w) = (f, w) \quad \forall w\in W_h\}.
\end{equation}
\end{algorithm}

Note that \eqref{min:uh-new} basically states that $u_h$ is the finite element function which satisfies the given PDE
\eqref{poission} weakly on each element $T$, plus minimizing the mismatch on the boundary of each element in the $L^p$ norm.

For any $\chi_h\in V_h$, denote by $Ds(\chi_h)$ the Fr\'echet
derivative at $\chi_h$. It is not hard to show that the action of the derivative at each $v\in V^0_h$ is given by
\begin{equation}
\begin{split}
\langle Ds(\chi_h),  v\rangle = & \sum_{T\in{\cal T}_h} h_T^{1-2p} \langle
|\chi_0-\chi_b|^{p-1}
sgn(\chi_0-\chi_b),  v_0-v_b\rangle_\pT \\
& + h_T^{1-p} \langle |\nabla \chi_0-\chi_g|^{p-1} sgn(\nabla
\chi_0-\chi_g), \nabla v_0-v_g\rangle_\pT.
\end{split}
\end{equation}
  for $1\le p<\infty$.

By introducing a Lagrange multiplier $\lambda_h\in W_h$, the
constrained minimization problem (\ref{min:uh-new}) can be
formulated as follows: Find $u_h\in V_h^0$ and $\lambda_h\in W_h$ such
that 
\begin{eqnarray}
\langle Ds(u_h), v\rangle + ({\cal L}_w v, \lambda_h) &=& 0, \qquad
\forall v
\in V_h^0\label{EQ:Var:001}\\
({\cal L}_w u_h, \sigma) &=& (f,\sigma), \quad \forall \sigma \in
W_h.\label{EQ:Var:002}
\end{eqnarray}

As for $ p=\infty$, the functional $s(\cdot)$ is not Fr\'echet differentiable so that no traditional Euler-Lagrange formulation is possible for the corresponding weak Galerkin finite element scheme.

\section{Min-Max Characterization}
For simplicity, we introduce the Lagrangian for the constrained optimization problem (\ref{min:uh-new})
$$
J(v,\sigma):= s(v) + b_h(v, \sigma) - (f,\sigma),
$$
where $b_h(v,\sigma):=(\LL_w v, \sigma)$ is a bilinear form.
The equations (\ref{EQ:Var:001})-(\ref{EQ:Var:002}) indicate that the numerical solution $(u_h;\lambda_h)$ is a critical point of the Lagrangian. In this section, we show that this critical point is indeed a saddle point of the same Lagrangian.

Observe that at the critical point $(u_h;\lambda_h)$, the equation \eqref{EQ:Var:002} holds true so that $u_h$ is a weak function in the admissible set $S_h$. Moreover, we have
$$
J(u_h, \lambda_h) = s(u_h).
$$
In fact, for any admissible function $v\in S_h$, one has $J(v,\sigma)= s(v)$ so that
$$
\max_{\sigma\in W_h} J(v,\sigma) = s(v).
$$
It follows that
\begin{equation}\label{May28:001}
J(u_h, \sigma) = s(u_h) = J(u_h, \lambda_h),\qquad \forall \sigma\in W_h.
\end{equation}
As the functional $J(v, \lambda_h)$ is convex in $v$, the condition of $D_v J(u_h, \lambda_h)=0$ implies that $u_h$ is a global minimizer of the functional $v \rightarrow J(v,\lambda_h)$; i.e.,
\begin{equation}\label{May28:002}
J(u_h,\lambda_h)\leq J(v,\lambda_h),\qquad \forall v\in V^0_h.
\end{equation}
Combining \eqref{May28:001} with \eqref{May28:002} yields
\begin{equation}\label{EQ:saddle-point}
J(u_h, \sigma)\leq J(u_h, \lambda_h) \leq  J(v, \lambda_h)\qquad \forall v\in V^0_h, \sigma\in W_h,
\end{equation}
which shows that the critical point $(u_h;\lambda_h)$ is a saddle point of the functional $J(\cdot,\cdot)$.

In summary, the constrained minimization problem (\ref{min:uh-new}) can be formulated as a min-max problem that seeks $u_h$ and $\lambda_h$ such that
$$
(u_h; \lambda_h) = \arg\min_{v\in V^0_h}\arg\max_{\sigma\in W_h} J(v,\sigma).
$$

\section{Solution Existence and Uniqueness}
As a convex minimization problem, the numerical scheme (\ref{min:uh-new}) must have a solution $u_h$ in the corresponding finite element space.  The following is a result on the solution uniqueness.

\begin{theorem}\label{theo:4.1}
For any $p\in (1,\infty)$, the constrained minimization problem (\ref{min:uh-new}) has one and only one solution, provided that the mesh size is sufficiently small.
\end{theorem}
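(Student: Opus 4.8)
The plan is to reduce both assertions to a single structural property of the finite element pair $(V^0_h, W_h)$ and then to extract that property from the assumed $W^{2,p}$-regularity of ${\cal L}$.

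Write the admissible set \eqref{sh} as an affine subspace $S_h = u^\star + S_h^0$, where $u^\star\in S_h$ is any fixed element and
\begin{equation}\label{ShZero}
S_h^0 := \{v\in V^0_h:\ ({\cal L}_w v, w)=0 \ \ \forall w\in W_h\}.
\end{equation}
Since $s$ is convex and, up to the mesh weights $h_T^{1-2p},h_T^{1-p}$, equals the $p$-th power of the seminorm $v\mapsto\big(\sum_{T}\|v_0-v_b\|^p_{0,p,\partial T}+\|\nabla v_0-{\bf v}_g\|^p_{0,p,\partial T}\big)^{1/p}$, it is coercive on the finite dimensional affine set $S_h$ precisely when
\begin{equation}\label{KernelTrivial}
S_h^0\cap\ker(s)=\{0\}.
\end{equation}
Hence \eqref{KernelTrivial} yields existence of a minimizer (together with the nonemptiness of $S_h$, i.e.\ surjectivity of $v\mapsto({\cal L}_w v,\cdot)$ from $V^0_h$ onto $W_h'$, which is the companion inf-sup and is likewise available once $h$ is small). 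For uniqueness, suppose $u^{(1)}_h\neq u^{(2)}_h$ were two minimizers with common optimal value $m$. Convexity gives $s\big(\tfrac12(u^{(1)}_h+u^{(2)}_h)\big)\le m$, and since $\tfrac12(u^{(1)}_h+u^{(2)}_h)\in S_h$ this must be an equality; for $p\in(1,\infty)$ the map $t\mapsto|t|^p$ is strictly convex, so equality forces $v_0-v_b$ and $\nabla v_0-{\bf v}_g$ to coincide for $u^{(1)}_h$ and $u^{(2)}_h$ pointwise almost everywhere on every $\partial T$. Therefore $v:=u^{(1)}_h-u^{(2)}_h$ lies in $S_h^0\cap\ker(s)$, and \eqref{KernelTrivial} gives $v=0$. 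This strict-convexity step is the only use of $p\in(1,\infty)$, and it is essential: when $p=1$ the objective is piecewise affine in the jumps and uniqueness can genuinely fail.

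To prove \eqref{KernelTrivial}, take $v\in S_h^0$ with $s(v)=0$. Then $v_0-v_b=0$ and $\nabla v_0-{\bf v}_g=0$ on each $\partial T$; as $v_b$ and ${\bf v}_g$ are single valued on interior edges, $v_0$ is a globally $C^1$ piecewise polynomial, so $v_0\in W^{2,p}(\Omega)\cap W^{1,p}_0(\Omega)$ with $v_b=v_0|_{\partial T}$ and ${\bf v}_g=\nabla v_0|_{\partial T}$. Integrating by parts twice in the definition \eqref{weak:deri} with this $v$ gives, for each $T$ and each $\psi\in W_h$,
\[
(\partial^2_{ij,w}v,\psi)_T=(\partial^2_{ij}v_0,\psi)_T+R_{ij}(\psi),
\]
where the remainder $R_{ij}(\psi)$ collects boundary integrals that are antisymmetric in the index pair $(i,j)$. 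Since $a=(a_{ij})$ is symmetric, $\sum_{ij}a_{ij}R_{ij}\equiv0$, whence $({\cal L}_w v,w)_T=({\cal L}v_0,w)_T$ for all $w\in W_h$. Thus the constraint $v\in S_h^0$ is exactly $({\cal L}v_0,w)=0$ for all $w\in W_h$; that is, the $L^2$-projection of ${\cal L}v_0$ onto $W_h$ vanishes, while each Hessian entry $\partial^2_{ij}v_0$ already belongs to $W_h$.

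It remains to deduce $v_0=0$, and this is where I expect the real work — and the smallness of $h$ — to enter. The $W^{2,p}$-regularity, in the form $\|v_0\|_{2,p}\le C\|{\cal L}v_0\|_{0,p}$, combined with the $L^2$-orthogonality of ${\cal L}v_0$ to $W_h$, suggests estimating $\|{\cal L}v_0\|_{0,p}$ by an element-wise duality argument: using $L^p$-stability of the $L^2$-projection on a quasi-uniform mesh, local approximation, and inverse inequalities, one seeks a bound $\|{\cal L}v_0\|_{0,p}\le\varepsilon(h)\,\|v_0\|_{2,p}$ with $\varepsilon(h)\to0$, which would force $v_0=0$, hence $v=0$, for $h$ small enough. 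The difficulty is that, unlike the $L^2$ theory of \cite{WW-mathcomp}, there is no Galerkin orthogonality or energy identity available, the coefficients $a_{ij}$ are merely in $L^\infty$, and the required discrete inf-sup has to be distilled from the continuous one through a Fortin-type construction that is uniform in $h$; carrying this out in the $W^{2,p}$ norm is precisely the technical core, and is in effect a special case of the stability analysis developed in Section 6.
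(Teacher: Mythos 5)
Your reduction of uniqueness to the kernel condition $S_h^0\cap\ker(s)=\{0\}$ via strict convexity of $t\mapsto|t|^p$, and the observation that any $v$ with $s(v)=0$ is a globally $C^1$ piecewise polynomial so that $({\cal L}_w v,w)=({\cal L}v_0,w)$ for $w\in W_h$, all match the paper's argument (the paper's Step 1 arrives at exactly the same smoothness condition \eqref{EQ:mynumber:001} and then equates weak and strong Hessians). Your remark about the antisymmetric integration-by-parts remainder and symmetry of $a$ is a correct, if slightly roundabout, way to see this; the paper simply observes that a more favourable integration-by-parts order makes the boundary remainders cancel exactly, without invoking symmetry of $a$. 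You also rightly flag the existence issue (nonemptiness of $S_h$, via the discrete inf-sup condition) that the paper passes over in one sentence.

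The genuine gap is the last step. You correctly deduce ${\cal Q}_h({\cal L}v_0)=0$ and that each $\partial_{ij}^2 v_0\in W_h$, and you correctly identify what has to be shown, namely $\|{\cal L}v_0\|_{0,p}\le\varepsilon(h)\|v_0\|_{2,p}$ with $\varepsilon(h)\to0$; but you stop there, speculating that it requires an $L^p$ stability/duality argument or a Fortin-type construction and that it is ``the technical core.'' In fact the paper closes this in three lines with an elementary algebraic trick that needs none of that machinery. Since $\partial_{ij}^2 v_0\in W_h$ and the cell average $\bar a_{ij}$ is constant on $T$, one has $(I-{\cal Q}_h)\bigl(\bar a_{ij}\,\partial_{ij}^2 v_0\bigr)=0$, hence
\[
{\cal L}v_0 \;=\; (I-{\cal Q}_h){\cal L}v_0 \;=\; \sum_{i,j}(I-{\cal Q}_h)\bigl((a_{ij}-\bar a_{ij})\,\partial_{ij}^2 v_0\bigr),
\]
and therefore $\|{\cal L}v_0\|_{0,p}\le C\max_{i,j}\sup_T\|a_{ij}-\bar a_{ij}\|_{0,\infty,T}\,\|\nabla^2 v_0\|_{0,p}$ using only the $L^p$-boundedness of the local $L^2$ projection. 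Combined with the a priori estimate $\|v_0\|_{2,p}\le C\|{\cal L}v_0\|_{0,p}$ this yields $v_0=0$ for small $h$. Note this requires $a_{ij}$ to be (piecewise) continuous so that the cell-average oscillation tends to zero; the paper invokes this explicitly in the proof although it is not restated in the theorem. Your concern that $a_{ij}\in L^\infty$ alone might be insufficient is therefore well founded, but your proposed remedies (inf-sup, Fortin operator, duality) are not the route the paper takes and are not needed here.
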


\begin{proof}
Let $u_h^{(1)}$ and $u_h^{(2)}$ be two minimum points. It follows that
\begin{equation*}
   s(u_h^{(j)}) \leq s(v),\qquad \forall v \in S_h.
\end{equation*}
  Note that $s(v)$ is the integral of $|v_0-v_b|^p$. For any two real
numbers $a$ and $b$, one has
$$
|a+b|^p \le 2^{p-1}(|a|^p + |b|^p),
$$
and the equality holds true if and only if $a=b$. The above
inequality is equivalent to
$$
|(a+b)/2|^p \le (|a|^p + |b|^p)/2.
$$
It follows that
\[
s((u_h^{(1)}+u_h^{(2)})/2) \le \left( s(u_h^{(1)})
+s(u_h^{(2)})\right)/2=\min_{v\in S_h} s(v).
\]
Furthermore, the above equality holds  true if and only if
$$
u_0^{(1)}- u_b^{(1)} =u_0^{(2)}- u_b^{(2)},\ \nabla u_0^{(1)}-
u_g^{(1)} = \nabla u_0^{(2)}- u_g^{(2)}.
$$
Or equivalently
\begin{equation}\label{EQ:mynumber:001}
u_0^{(1)}- u_0^{(2)}  =u_b^{(1)} - u_b^{(2)},\ \nabla (u_0^{(1)}
- u_0^{(2)}) = u_g^{(1)} - u_g^{(2)}.
\end{equation}
Next, observe that
$$
({\cal L}_w ( u_h^{(1)} - u_h^{(2)}), \sigma) = 0,\quad \forall \sigma\in W_h
$$
and with the smoothness condition (\ref{EQ:mynumber:001}), the weak action $\partial^2_{ij,w} ( u_h^{(1)} - u_h^{(2)})$ is the same as the strong action $\partial_{ij}^2( u_h^{(1)} - u_h^{(2)})$. It follows that
\begin{equation}\label{EQ:Feb24:001}
({\cal L} ( u_h^{(1)} - u_h^{(2)}), \sigma)= 0.
\end{equation}
Our goal is to show that $e_h=u_h^{(1)} - u_h^{(2)}=0$.
To this end, we first denote by ${\cal Q}_h$ the $L^2$ projection operator from $V^0_h$ onto $W_h$,  and then chose $\sigma={\cal Q}_h({\cal L}e_h)$ in  \eqref{EQ:Feb24:001} to obtain
$$
{\cal Q}_h(\LL e_h) = 0.
$$
It follows that
$$
\LL e_h = (I-{\cal Q}_h) \LL e_h =\sum_{i,j=1}^d (I-{\cal Q}_h) (a_{ij}-\bar{a}_{ij})\partial_{ij}^2 e_h.
$$
  Here and in the following, $I$ denotes the identity operator, and $\bar a_{ij}$ denotes the cell-average of $a_{ij}$, i.e.,
 \begin{equation}\label{aij}
    \bar a_{ij}|_{T}:=\frac{1}{T}\int_{T} a_{ij}.
 \end{equation}
As $a_{ij}$ is continuous on each element $T$, we then have
$$
\|\LL e_h\|_{0,p} \leq C\varepsilon \|\nabla^2 e_h\|_{0,p}.
$$
  Here and in the following, $C$ is a constant independent of the mesh size $h$, which is not necessarily the same at  each appearance.
Then the $W^{2,p}$-regularity assumption implies that
$$
\|e_h\|_{2,p}\leq C \|\LL e_h\|_{0,p}.
$$
Combining the last two estimates gives
$$
\|e_h\|_{2,p}\leq C \varepsilon \|e_h\|_{2,p}.
$$
Thus, we have $e_h\equiv 0$ when the mesh size is sufficiently small.
\end{proof}

\section{Error Estimates} 

We establish in this section some error estimates for the numerical solution.

\subsection{$L^q$-estimate for the Lagrangian multiplier}
This subsection is dedicated to the $L^q$ estimate for the Lagrangian multiplier $\lambda_h$ with $q=p/(p-1)$. To this end, we first introduce some projections operators.

 For each element $T$, we denote by $Q_0$ the $L^2$ projection operator onto $P_k(T), k\ge 2$. For each edge or face $e\subset \partial T$, denote by $Q_b$ and ${\bf Q}_g$ the $L^2$ projection operators onto $P_k(e)$ and $[P_{k-1}(e)]^d$, respectively. For any $v\in H^2(\Omega)$, denote by $Q_hv$ the $L^2$ projection onto the weak finite element space $V_h$ such that on each element $T$,
\[
   Q_hv=\{Q_0v,Q_bv,{\bf Q}_g(\nabla v)\}.
\]
  It has been proved in \cite{WW-2014} that the projection operator $Q_h$ satisfies the following commutative property:
 \begin{equation}\label{QQ}
    \partial^2_{ij,w}(Q_hv)={\cal Q}_h(\partial^2_{ij} v),\ \ i,j=1,\ldots,d.
 \end{equation}
   Here ${\cal Q}_h$ is the $L^2$ projection  operator onto $W_h$.
   
\begin{theorem}
Assume that the coefficient matrix $\{a_{ij}\}_{d\times d}$ in \eqref{poission} is piecewise continuous on each element, and $u_h$ is the numerical solution
arising from (\ref{min:uh-new}) with $p\ge 1$, and $\lambda_h$ is the Lagrangian multiplier for the variational problem. Then the following estimate holds true
 \begin{eqnarray}\label{EQ:Feb23:201}
   \|\lambda_h\|_{0,q}&\le &\left\{
   \begin{array}{ll}
  C s(u_h)^{\frac 1q},&  q\in (1,\infty), \vspace{1mm} \\
  C, & q=1,\infty,
   \end{array}\right.
\end{eqnarray}
provided that the mesh size $h$ is sufficiently small. 
\end{theorem}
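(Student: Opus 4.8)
The plan is to run a duality argument built on the inf-sup condition from Section~2 and the identity \eqref{EQ:Var:001} that defines the multiplier. Since $\lambda_h\in W_h\subset L^q(\Omega)$, that inf-sup condition already yields
\[
\beta\|\lambda_h\|_{0,q}\le\sup\Big\{\frac{|({\cal L}\psi,\lambda_h)|}{\|\psi\|_{2,p}}:\ \psi\in W^{2,p}\cap W^{1,p}_0\Big\},
\]
so the problem reduces to estimating $|({\cal L}\psi,\lambda_h)|$ for an arbitrary such $\psi$. Because $\psi$ vanishes on $\partial\Omega$, its projection $Q_h\psi=\{Q_0\psi,Q_b\psi,{\bf Q}_g(\nabla\psi)\}$ lies in $V^0_h$ and may be used as the test function $v$ in \eqref{EQ:Var:001}, which turns $b_h(Q_h\psi,\lambda_h)=({\cal L}_w Q_h\psi,\lambda_h)$ into $-\langle Ds(u_h),Q_h\psi\rangle$. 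Inserting the $L^2$ projection ${\cal Q}_h$ into the pairing (legitimate since $\lambda_h\in W_h$), I would decompose
\[
({\cal L}\psi,\lambda_h)=({\cal Q}_h({\cal L}\psi),\lambda_h)=-\langle Ds(u_h),Q_h\psi\rangle-\big({\cal L}_w Q_h\psi-{\cal Q}_h({\cal L}\psi),\ \lambda_h\big)
\]
and bound the two terms separately.

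For the consistency term, the commutativity \eqref{QQ} gives ${\cal L}_w Q_h\psi=\sum_{i,j}a_{ij}{\cal Q}_h(\partial_{ij}^2\psi)$; splitting each $a_{ij}$ into its element average $\bar a_{ij}$ and a remainder, the constant part commutes with the element-wise operator ${\cal Q}_h$, so ${\cal L}_w Q_h\psi-{\cal Q}_h({\cal L}\psi)$ reduces to a sum of commutators $(a_{ij}-\bar a_{ij}){\cal Q}_h(\partial_{ij}^2\psi)-{\cal Q}_h\big((a_{ij}-\bar a_{ij})\partial_{ij}^2\psi\big)$. Using the $L^p$-stability of ${\cal Q}_h$ on piecewise polynomials and the continuity of each $a_{ij}$ on every element --- which makes $\varepsilon:=\max_{i,j}\max_{T\in{\cal T}_h}\|a_{ij}-\bar a_{ij}\|_{0,\infty,T}$ tend to zero as $h\to 0$ --- this term is bounded by $C\varepsilon\|\psi\|_{2,p}\|\lambda_h\|_{0,q}$ and can be absorbed on the left once $h$ is small. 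For the derivative term I substitute $v=Q_h\psi$ into the explicit formula for $\langle Ds(u_h),v\rangle$; the building blocks are the boundary estimates $\|Q_0\psi-Q_b\psi\|_{0,p,\partial T}\le Ch_T^{2-1/p}|\psi|_{2,p,T}$ and $\|\nabla Q_0\psi-{\bf Q}_g(\nabla\psi)\|_{0,p,\partial T}\le Ch_T^{1-1/p}|\psi|_{2,p,T}$, which follow from the trace inequality, the $L^p$-stability of $Q_0,Q_b,{\bf Q}_g$, and local approximation. Applying H\"older on each $\partial T$ --- using $\bigl\||u_0-u_b|^{p-1}\bigr\|_{0,q,\partial T}=\|u_0-u_b\|_{0,p,\partial T}^{\,p-1}$ and the analogous identity for the gradient jump --- and then the discrete H\"older inequality over $T\in{\cal T}_h$ with the conjugate exponents $q$ and $p$, the powers of $h_T$ cancel exactly, leaving $|\langle Ds(u_h),Q_h\psi\rangle|\le Cs(u_h)^{1/q}\|\psi\|_{2,p}$. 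Dividing by $\|\psi\|_{2,p}$, taking the supremum, and invoking the inf-sup bound gives $\beta\|\lambda_h\|_{0,q}\le Cs(u_h)^{1/q}+C\varepsilon\|\lambda_h\|_{0,q}$, hence the claimed estimate for $q\in(1,\infty)$ once the mesh is fine enough.

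For the endpoint $q=\infty$ (i.e. $p=1$) the same argument applies, the only change being that $|u_0-u_b|^{p-1}sgn(\cdot)$ and $|\nabla u_0-u_g|^{p-1}sgn(\cdot)$ reduce to $sgn(\cdot)$, which is pointwise bounded by $1$; hence $|\langle Ds(u_h),Q_h\psi\rangle|\le C\|\psi\|_{2,1}$ with $C$ independent of $u_h$, and the inf-sup bound gives $\|\lambda_h\|_{0,\infty}\le C$. The endpoint $q=1$ (i.e. $p=\infty$) is the delicate one, since $s(\cdot)$ is not Fr\'echet differentiable: $\langle Ds(u_h),\cdot\rangle$ must then be understood through an element of the subdifferential of the sup-type stabilizer, and with that interpretation the same duality argument, together with the pointwise boundedness of the subgradient, still yields $\|\lambda_h\|_{0,1}\le C$. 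I expect the main obstacle to be exactly the scaling bookkeeping in the derivative term --- verifying that the weights $h_T^{1-2p}$ and $h_T^{1-p}$, the powers produced by the boundary approximation estimates, and the $s(u_h)$-contributions combine to leave precisely $s(u_h)^{1/q}$ --- together with establishing the $L^p$ (rather than the familiar $L^2$) stability and approximation properties of the projections $Q_h,Q_0,Q_b,{\bf Q}_g,{\cal Q}_h$ and the variable-coefficient commutator estimate.
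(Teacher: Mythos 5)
Your argument for $q\in(1,\infty)$ is essentially the same as the paper's, differing only in presentation: you invoke the abstract inf-sup bound $\beta\|\lambda_h\|_{0,q}\le\sup_\psi|({\cal L}\psi,\lambda_h)|/\|\psi\|_{2,p}$ and then estimate the numerator for a generic $\psi$, whereas the paper realizes the same duality concretely by taking $\eta=|\lambda_h|^{q-1}\mathrm{sgn}(\lambda_h)$, solving ${\cal L}\Phi=\eta$, and plugging $\Phi$ into the same two-term decomposition (consistency error plus $\langle Ds(u_h),Q_h\Phi\rangle$). The Hölder and trace bookkeeping you describe, including the exact cancellation of the $h_T$ powers to produce $s(u_h)^{1/q}\|\psi\|_{2,p}$, matches the paper step for step, and your remarks about $L^p$-stability of $Q_0,Q_b,{\bf Q}_g,{\cal Q}_h$ and the variable-coefficient commutator are the right caveats.

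The real divergence is at the endpoints, and this is where your sketch has a genuine gap. For $q=1$ (i.e.\ $p=\infty$) you write that ``$\langle Ds(u_h),\cdot\rangle$ must be understood through an element of the subdifferential of the sup-type stabilizer'' and assert the same duality argument then goes through. That claim is not established: the subdifferential of the $\ell^\infty$-type stabilizer $s(\cdot)$ at $u_h$ is a set of measures supported on where the sup is attained, and it is not at all immediate that an admissible element pairs with $Q_h\Phi$ to give the bound $C\|\Phi\|_{2,1}$. The paper avoids this entirely. Instead of working with $Ds$ at all for $p=1,\infty$, it exploits the saddle-point inequality from the min-max characterization: from $J(u_h,\lambda_h)\le J(u_h+c_0 v,\lambda_h)$ for all $c_0\in\bR$, $v\in V_h^0$, choosing $c_0=-\mathrm{sgn}(b_h(v,\lambda_h))$ and then the triangle property $|s(u)-s(v)|\le s(u-v)$ (which holds precisely when $p=1$ or $p=\infty$) yields the differentiability-free bound $|b_h(v,\lambda_h)|\le s(v)$. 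Taking $v=Q_h\Phi$ and observing $s(Q_h\Phi)\le C\|\Phi\|_{2,q}\le C$ then finishes both endpoints in one stroke. You should replace your $q=1$ paragraph with this saddle-point argument (or at minimum give a concrete description of the subgradient of the sup-stabilizer and check that it pairs correctly); for $q=\infty$ your direct calculation with $\mathrm{sgn}$ is fine but the saddle-point route is still cleaner and covers both cases uniformly.
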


\begin{proof}
 Let $P_0\in\Omega$ be the point such that
 \[
    |\lambda_h(P_0)|=\sup_{x\in\Omega} |\lambda_h(x)|,
 \]
 and denote by   $\tilde\delta_{P_0}$ the regularized  delta function associated with $P_0$.  Let
 \begin{eqnarray*}
   \eta&=&\left\{
   \begin{array}{ll}
   |\lambda_h|^{q-1}sgn(\lambda_h),&  q\in [1,\infty), \vspace{1mm} \\
  \tilde\delta_{P_0}sgn(\lambda_h(P_0)), & q=\infty,
   \end{array}\right.
\end{eqnarray*}
and consider the dual problem of seeking $\Phi \in W^{1,p}_0\cap W^{2,p}$ such that
$$
{\cal L} \Phi =\eta.
$$
By using the definition of $Q_h$ and ${\cal Q}_h$ and the commutative property \eqref{QQ}, we have
\begin{eqnarray*}
a_{ij} \partial_{ij,w}^2 Q_h\phi &=& a_{ij} {\cal Q}_h \partial_{ij}^2 \phi
= \bar{a}_{ij} {\cal Q}_h \partial_{ij}^2 \phi + (a_{ij}-\bar{a}_{ij}) {\cal Q}_h \partial_{ij}^2 \phi\\
&=& {\cal Q}_h (a_{ij} \partial_{ij}^2 \phi) + {\cal Q}_h (\bar{a}_{ij} - a_{ij}) \partial_{ij}^2 \phi +  (a_{ij}-\bar{a}_{ij}) {\cal Q}_h \partial_{ij}^2 \phi.
\end{eqnarray*}
 Here $\bar a_{ij}$ denotes the cell average of $a_{ij}$. Thus, for any function $\phi\in W^{2,p}(\Omega)$ and $\sigma\in W_h$,  we  get
\begin{eqnarray*}
(\LL_w Q_h\phi, \sigma) &=& \sum_{i,j=1}^d(a_{ij} \partial_{ij,w}^2 Q_h\phi, \sigma)\\
&=&(\LL\phi, \sigma) + \sum_{i,j=1}^d ((\bar{a}_{ij} -a_{ij})(I-{\cal Q}_h)\partial_{ij}^2 \phi,\sigma).
\end{eqnarray*}
Hence,
\begin{equation}\label{EQ:Feb23:101}
\begin{split}
|(\LL_w Q_h\phi, \sigma)- (\LL\phi, \sigma)|&=|\sum_{i,j=1}^d((\bar{a}_{i,j} - a_{ij})(I-{\cal Q}_h)\partial_{ij}^2 \phi,\sigma)|\\
&\leq C \varepsilon \sum_{i,j=1}^d\|(I-{\cal Q}_h)\partial_{ij}^2 \phi\|_{0,p}\|\sigma\|_{0,q}.
\end{split}
\end{equation}
Here $\varepsilon$ could be sufficiently small as the mesh size decreases.

We now consider the case of $q\in (1,\infty)$. By replacing $\phi$ with $\Phi$, we obtain
 \begin{equation}\label{EQ:Feb23:001}
\begin{split}
\|\lambda_h\|_{0,q}^q &= (\eta, \lambda_h)= ({\cal L} \Phi,\lambda_h) \\
& \leq |({\cal L}_w Q_h \Phi, \lambda_h)| + |({\cal L} \Phi,\lambda_h)-({\cal L}_w Q_h \Phi, \lambda_h)| \\
& \leq |({\cal L}_w Q_h \Phi, \lambda_h)| + C \varepsilon \sum_{i,j=1}^d\|(I-{\cal Q}_h)\partial_{ij}^2 \Phi\|_{0,p} \|\lambda_h\|_{0,q}\\
&\leq |({\cal L}_w Q_h \Phi, \lambda_h)| + C\varepsilon \|\eta\|_{0,p} \|\lambda_h\|_{0,q}\\
&\leq |({\cal L}_w Q_h \Phi, \lambda_h)|+ C\varepsilon  \|\lambda_h\|_{0,q}^{q},
\end{split}
\end{equation}
which leads to
\begin{equation}\label{cc}
(1-C\varepsilon) \|\lambda_h\|_{0,q}^{q} \leq |({\cal L}_w Q_h \Phi, \lambda_h)|=| \langle Ds(u_h), Q_h
\Phi\rangle|.
\end{equation}
 Here in the second step, we have used the  equation (\ref{EQ:Var:001}).
Recall that
\begin{equation*}
\begin{split}
\langle Ds(u_h),  Q_h\Phi\rangle = & \sum_{T\in {\cal T}_h} h_T^{1-2p} \langle
|u_0-u_b|^{p-1}
sgn(u_0-u_b),  Q_0 \Phi- Q_b \Phi \rangle_\pT \\
& + h_T^{1-p} \langle |\nabla u_0- u_g|^{p-1} sgn(\nabla u_0-u_g),
\nabla Q_0 \Phi- Q_b \nabla \Phi\rangle_\pT.
\end{split}
\end{equation*}
For simplicity, we shall deal with the estimate for the first term
on the right-hand side. This can be done by using the usual
H\"older's inequality as follows.
\begin{equation*}
|\langle |u_0-u_b|^{p-1} sgn(u_0-u_b),  Q_0 \Phi- Q_b \Phi
\rangle_\pT| \leq \| u_0-u_b \|_{0,p,\pT}^{p/q} \|Q_0 \Phi- Q_b
\Phi\|_{0,p,\pT}.
\end{equation*}
Summing over all element $T\in {\cal T}_h$ yields
\begin{equation*}
\begin{split}
&\sum_T |\langle |u_0-u_b|^{p-1} sgn(u_0-u_b),  Q_0 \Phi- Q_b \Phi
\rangle_\pT| \\
\leq & \left(\sum_T \| u_0-u_b \|_{0,p,\pT}^p\right)^{1/q} \left(
\sum_T \|Q_0 \Phi- Q_b
\Phi\|_{0,p,\pT}^p\right)^{1/p}\\
\leq & C \left(\sum_T \| u_0-u_b \|_{0,p,\pT}^p\right)^{1/q}
h^{2-\frac1p}
\|\Phi\|_{2,p}\\
\leq & C h^{2-\frac1p} \left(\sum_T \| u_0-u_b
\|_{0,p,\pT}^p\right)^{1/q} \|\lambda_h\|_{0,q}^{q/p}.
\end{split}
\end{equation*}
  Consequently,
 \[
    | \langle Ds(u_h),  Q_h\Phi\rangle|\le C h^{1-2p} h^{2-\frac1p} \left(\sum_T \| u_0-u_b
\|_{0,p,\pT}^p\right)^{1/q} \|\lambda_h\|_{0,q}^{q/p}.
 \]
 Substituting the above error bound into \eqref{cc}, we have
\begin{equation*}
\begin{split}
\|\lambda_h\|_{0,q} & \leq C h^{1-2p} h^{2-\frac1p}\left(\sum_T \|
u_0-u_b
\|_{0,p,\pT}^p\right)^{1/q}\\
& \leq C  \left(\sum_T h^{1-2p} \| u_0-u_b
\|_{0,p,\pT}^p\right)^{1/q}
= C s(u_h)^{1/q}.
\end{split}
\end{equation*}

We next consider the case for $q=1, \infty$. By following the same argument as adopted for \eqref{EQ:Feb23:001}, we obtain
\begin{equation}\label{e:11}
\|\lambda_h\|_{0,q} 
\leq |({\cal L}_w Q_h \Phi, \lambda_h)|+ C\varepsilon  \|\lambda_h\|_{0,q}
=  | b_h( Q_h \Phi, \lambda_h)|+ C\varepsilon  \|\lambda_h\|_{0,q},\ \ q=1,\infty.
\end{equation}
 From the second half of the inequality \eqref{EQ:saddle-point}, we have for all  $c_0\in\mathbb \bR, v\in V_h^0$,
\begin{eqnarray*}
     s(u_h)+b_h(u_h,\lambda_h)\le s(u_h+c_0v)+b_h(u_h+c_0v,\lambda_h),
\end{eqnarray*}
  which yields
\[
    s(u_h)\le s(u_h+c_0v)+b_h(c_0v,\lambda_h),\ \ \forall c_0\in \bR.
\]
Note that the above inequality is valid for all real number $c_0$.
By choosing $c_0=-sgn (b_h(v,\lambda_h))$, we obtain
\begin{eqnarray}\label{ineq:1}
\begin{split}
       |b_h(v,\lambda_h)| &\le  s(u_h- sgn (b_h(v,\lambda_h)) v)-s(u_h) \\
     &\le
      \max( |s(u_h-v)-s(u_h)|, |s(u_h+v)-s(u_h)|)\\
      &\le s(v).
  \end{split}
\end{eqnarray}
Here in the last step, we have used the following triangle inequality 
\[
    |s(u)-s(v)|\le |s(u-v)|,
\]
which holds true for the case of $p=1, \infty$. By the definition of $s(\cdot)$ and the trace inequality, we have
 \[
    s(Q_h\Phi)\le C \|\Phi\|_{2,q}\le C \|\eta\|_{0,q}\le C, \ \ q=1,\infty,
 \]
    which yields, together with \eqref{ineq:1},
 \begin{eqnarray*}
     | b_h( Q_h \Phi, \lambda_h)|\le s(Q_h\Phi) \le  C.
 \end{eqnarray*}
  Plugging the above inequality into \eqref{e:11}, we obtain \eqref{EQ:Feb23:201} for $q=1, \infty$. This completes the proof of the theorem.
\end{proof}

\subsection{Error estimates in a discrete $W^{2,p}$-norm}
For any weak function $v=\{v_0,v_b,{\bf v}_g\}$, define
\begin{eqnarray}\label{tildes}
  \tilde s(v)&= &\left\{
   \begin{array}{ll}
     s(v)^{\frac 1p},&  p\in [1,\infty), \vspace{1mm} \\
    s(v),  & p= \infty.
   \end{array}\right.
\end{eqnarray}
 The following is a discrete $W^{2,p}$-norm for $v$:
 \begin{equation}\label{discrete-H2}
   \|v\|_{2,p,h}=\tilde s(v)+\|{\cal Q}_h {\cal L} v_0\|_{0,p},\ \  p\ge 1.
 \end{equation}

The following result provides a qualitative measure for the ``discontinuity" of the finite element approximation $u_h$ through an  estimation of $s(u_h)$ or $\tilde s(u_h)$.

\begin{theorem}\label{theo:001}
Assume that the exact solution $u$ satisfies $u\in W^{k+1,p}(\Omega)$. For the numerical solution $u_h$
arising from (\ref{min:uh-new}) with $p\ge 1$, the following estimate holds true
\begin{eqnarray}\label{EQ:Feb25:100}
  \tilde s(u_h)\le C h^{k-1} \|u\|_{k+1, p}.
\end{eqnarray}
Furthermore, combining the above estimate with (\ref{EQ:Feb23:201}) yields
\begin{eqnarray*}
\|\lambda_h\|_{0,q}^q &\leq & C h^{(k-1)p} \|u\|_{k+1,p}^{p},\quad q\in (1,\infty),\\
\|\lambda_h\|_{0,q} &\leq & C,\qquad q=1,\infty.
\end{eqnarray*}
\end{theorem}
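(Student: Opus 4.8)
The strategy is to test the saddle-point inequality \eqref{EQ:saddle-point} against a suitable interpolant of the exact solution, rather than trying to show directly that $Q_hu$ is admissible (which it is not, in general, owing to the oscillation of the coefficients). Note first that $Q_hu\in V_h^0$, since $u|_{\partial\Omega}=0$ forces $Q_bu=0$ on $\partial\Omega$. By definition of $J$ one has $J(v,\lambda_h)=s(v)+b_h(v,\lambda_h)-(f,\lambda_h)$ for $v\in V_h^0$, and from the min-max characterization we have $J(u_h,\lambda_h)=s(u_h)$. Hence the right half of \eqref{EQ:saddle-point} with $v=Q_hu$ gives
$$
s(u_h)=J(u_h,\lambda_h)\le J(Q_hu,\lambda_h)=s(Q_hu)+\big(b_h(Q_hu,\lambda_h)-(f,\lambda_h)\big).
$$
By the computation leading to \eqref{EQ:Feb23:101} with $\phi=u$ (which relies on the commutative property \eqref{QQ}) together with ${\cal L}u=f$ a.e., the last term equals the consistency defect $R:=\sum_{i,j=1}^d\big((\bar a_{ij}-a_{ij})(I-{\cal Q}_h)\partial^2_{ij}u,\lambda_h\big)$, so that $s(u_h)\le s(Q_hu)+|R|$. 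It then remains to estimate $s(Q_hu)$ and $|R|$.

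For $s(Q_hu)$ I would use standard polynomial approximation theory on the shape-regular mesh. On each $\partial T$, expressing $Q_0u-Q_bu$ and $\nabla Q_0u-{\bf Q}_g(\nabla u)$ in terms of the projection errors $u-Q_0u$ and $\nabla u-\nabla Q_0u$ (using that $Q_b$ and ${\bf Q}_g$ reproduce the restrictions of $Q_0u$ and $\nabla Q_0u$ to each edge) and applying the trace inequality, the $L^p$-stability of the local $L^2$-projections, and the approximation estimates for $u\in W^{k+1,p}(T)$, one obtains
$$
\|Q_0u-Q_bu\|_{0,p,\partial T}\le C h_T^{\,k+1-1/p}|u|_{k+1,p,T},\qquad \|\nabla Q_0u-{\bf Q}_g(\nabla u)\|_{0,p,\partial T}\le C h_T^{\,k-1/p}|u|_{k+1,p,T}.
$$
Inserting these into the definition of $s$, the two weights $h_T^{1-2p}$ and $h_T^{1-p}$ each produce the common factor $h_T^{(k-1)p}$, and summing over $T\in{\cal T}_h$ yields $s(Q_hu)\le Ch^{(k-1)p}\|u\|^p_{k+1,p}$ for $1\le p<\infty$; the obvious $L^\infty$ modifications give $s(Q_hu)\le Ch^{k-1}\|u\|_{k+1,\infty}$ for $p=\infty$. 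I expect this step, though routine, to be the most computation-heavy part of the proof, since it requires lining up the two stabilizer scalings against the approximation orders $k+1$ (for the jump $v_0-v_b$) and $k$ (for $\nabla v_0-{\bf v}_g$) and treating both finite $p$ and $p=\infty$.

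For the defect, \eqref{EQ:Feb23:101} with $\sigma=\lambda_h$ gives $|R|\le C\varepsilon\sum_{i,j}\|(I-{\cal Q}_h)\partial^2_{ij}u\|_{0,p}\,\|\lambda_h\|_{0,q}$; since $\partial^2_{ij}u\in W^{k-1,p}$ and $W_h$ contains piecewise polynomials of degree $l\in\{k-2,k-1\}$, in either case $\|(I-{\cal Q}_h)\partial^2_{ij}u\|_{0,p}\le Ch^{k-1}|u|_{k+1,p}$, whence $|R|\le C\varepsilon h^{k-1}\|u\|_{k+1,p}\,\|\lambda_h\|_{0,q}$. Now I would invoke the already proved multiplier estimate \eqref{EQ:Feb23:201}. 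For $p\in(1,\infty)$ it gives $\|\lambda_h\|_{0,q}\le Cs(u_h)^{1/q}$, so $|R|\le Ch^{k-1}\|u\|_{k+1,p}\,s(u_h)^{1/q}$, and a weighted Young's inequality with exponents $p$ and $q$ absorbs a small multiple of $s(u_h)$ into the left side of $s(u_h)\le s(Q_hu)+|R|$, leaving $s(u_h)\le Ch^{(k-1)p}\|u\|^p_{k+1,p}$, i.e.\ $\tilde s(u_h)=s(u_h)^{1/p}\le Ch^{k-1}\|u\|_{k+1,p}$. For $p=1$ (resp.\ $p=\infty$) we have $q=\infty$ (resp.\ $q=1$) and \eqref{EQ:Feb23:201} gives $\|\lambda_h\|_{0,q}\le C$, so that $|R|\le Ch^{k-1}\|u\|_{k+1,p}$ directly and $\tilde s(u_h)=s(u_h)\le Ch^{k-1}\|u\|_{k+1,p}$ without any absorption. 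In all cases $h$ is required small enough for \eqref{EQ:Feb23:201} to apply.

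Finally, the multiplier bounds follow immediately by combining the estimate just obtained with \eqref{EQ:Feb23:201}: for $q\in(1,\infty)$, $\|\lambda_h\|^q_{0,q}\le Cs(u_h)=C\tilde s(u_h)^p\le Ch^{(k-1)p}\|u\|^p_{k+1,p}$; for $q=1,\infty$ the bound $\|\lambda_h\|_{0,q}\le C$ is precisely \eqref{EQ:Feb23:201}. Aside from the approximation bookkeeping in the estimate for $s(Q_hu)$, the one point that needs care is the accounting of the $\varepsilon$- and Young-constants so that the error term generated by $R$ is genuinely $O(h^{(k-1)p})$; this is why it is essential to retain the full factor $h^{k-1}$ in the bound for $\|(I-{\cal Q}_h)\partial^2_{ij}u\|_{0,p}$ rather than merely using smallness of $\varepsilon$.
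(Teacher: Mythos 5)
Your proposal follows the paper's proof essentially line by line: you test the right half of the saddle-point inequality \eqref{EQ:saddle-point} at $v=Q_hu$, rewrite the residual $b_h(Q_hu,\lambda_h)-(f,\lambda_h)$ as the consistency defect using \eqref{EQ:Feb23:101} with $\phi=u$ and ${\cal L}u=f$, bound $s(Q_hu)$ via the trace inequality and approximation properties of $Q_h$, control the defect by \eqref{EQ:Feb23:201} and absorb via a weighted Young's inequality for $p\in(1,\infty)$, and handle $p=1,\infty$ directly. This is exactly the argument in the paper, including the final step of feeding the bound on $\tilde s(u_h)$ back into \eqref{EQ:Feb23:201} to obtain the $\lambda_h$ estimates.
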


\begin{proof}
From the saddle-point property of the Lagrangian $J(v,\sigma)$ we have
$$
s(u_h) = J(u_h, \lambda_h) \leq J(v, \lambda_h)=s(v) +(\LL_w v, \lambda_h)-(f,\lambda_h)
$$
for all $v\in V_h^0$. In particular, by choosing $v=Q_h u$ we obtain
\begin{eqnarray*}
s(u_h) &\leq & s(Q_h u) +(\LL_w Q_h u, \lambda_h)-(f,\lambda_h)\\
&=&s(Q_hu) +(\LL_w Q_h u, \lambda_h)-(\LL u,\lambda_h).
\end{eqnarray*}
From the estimate (\ref{EQ:Feb23:101}) we have
$$
s(u_h) \leq s(Q_h u) + C\varepsilon \|(I-{\cal Q}_h)\nabla^2 u\|_{0,p} \|\lambda_h\|_{0,q}.
$$
Substituting the estimate (\ref{EQ:Feb23:201}) to the right-hand side of the above inequality yields
$$
s(u_h) \leq s(Q_h u) + C \varepsilon \|(I-{\cal Q}_h)\nabla^2 u\|_{0,p} s(u_h)^{1/q},\ \ q\in (1,\infty),
$$
  and
$$
s(u_h) \leq s(Q_h u) + C \varepsilon \|(I-{\cal Q}_h)\nabla^2 u\|_{0,p},\ \ q=1,\infty.
$$
Now from the inequality $ab\leq \rho a^p + \rho^{-q/p}b^q$  and the estimate  
$$
\|(I-{\cal Q}_h)\nabla^2 u\|_{0,p}\le C h^{k-1}\|u\|_{k+1,p}
$$ 
we have
 \begin{eqnarray*}
  s(u_h)&\le &\left\{
   \begin{array}{ll}
     s(Q_h u) +  C h^{(k-1)p} \|u\|^p_{k+1, p},&  p\in (1,\infty), \vspace{1mm} \\
    s(Q_h u) + C h^{k-1} \|u\|_{k+1, p},& p= 1, \infty,
   \end{array}\right.
\end{eqnarray*}
  or equivalently,
 \begin{eqnarray}\label{EQ:Feb23:202}
   \tilde s(u_h)\le \tilde s(Q_h u) + C h^{k-1} \|u\|_{k+1, p},\ \  p\ge 1.
 \end{eqnarray}

The term $\tilde s(Q_h u)$ can be handled as follows.
For any $p\in [1,\infty)$, using the trace inequality, we arrive at
\begin{equation*}
\begin{split}
\int_{\pT} |Q_0 u - Q_b u|^p ds & \leq C \int_{\pT} |Q_0 u - u|^p ds
\\
& \leq C h^{-1} \left( \|u - Q_0 u\|_{0, p, T}^p + h^p \|\nabla(u-Q_0 u)\|_{0, p, T}^p\right)\\
&\leq C h^{-1} h^{p(k+1)} \|u\|_{k+1, p, T}^p.
\end{split}
\end{equation*}
Following the same argument, we obtain
\[
\int_{\pT} |\nabla (Q_0 u)-{\bf Q}_b(\nabla u)|^p ds \le C h^{-1} h^{pk} \|u\|_{k+1, p, T}^p.
\]
Thus,
$$
 \tilde s(Q_h u) \leq C h^{(k-1)} \|u\|_{k+1, p},\ \ p\in [1,\infty).
$$
The same analysis can be applied to  the  estimate for the case of $p=\infty$. Substituting the above inequality into (\ref{EQ:Feb23:202}) yields the desired result (\ref{EQ:Feb25:100}). This completes the proof of the theorem.
\end{proof}

\bigskip

Next, we would like to estimate $\|u_h-Q_hu\|_{2,p,h}$, where $Q_h$ is the $L^2$ projection operator into the finite element space $V_h$.  

\begin{theorem}
Assume that $u\in W^{k+1,p}(\Omega)$ and let $u_h$ be its numerical approximation
arising from (\ref{min:uh-new}) with $p\ge 1$. Then, the following estimate holds true
\begin{equation}\label{EQ:Feb25:104}
\|u_h-Q_hu\|_{2,p,h}\leq C h^{k-1}\|u\|_{k+1,p}.
\end{equation}
\end{theorem}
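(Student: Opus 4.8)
The proof estimates the two parts of $\|e_h\|_{2,p,h}$, with $e_h:=u_h-Q_hu=\{e_0,e_b,{\bf e}_g\}$, namely the stabilizer part $\tilde s(e_h)$ and the Hessian part $\|{\cal Q}_h{\cal L}e_0\|_{0,p}$, and bounds each by $Ch^{k-1}\|u\|_{k+1,p}$. For the stabilizer part, note that $\tilde s(\cdot)$ is a seminorm: for $p\in[1,\infty)$, $s(v)^{1/p}$ is, up to the fixed factor $p^{-1/p}$, the $L^p$-norm of the pair $\big(h_T^{(1-2p)/p}(v_0-v_b),\,h_T^{(1-p)/p}(\nabla v_0-{\bf v}_g)\big)$ over the skeleton, which depends linearly on $v$, so Minkowski's inequality gives $\tilde s(e_h)\le\tilde s(u_h)+\tilde s(Q_hu)$; the case $p=\infty$ is the analogous elementary estimate. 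Theorem~\ref{theo:001} bounds $\tilde s(u_h)$ by $Ch^{k-1}\|u\|_{k+1,p}$, and its proof has already bounded $\tilde s(Q_hu)$ by the same quantity; this disposes of the stabilizer part.

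\emph{The key new ingredient} is the estimate $\|\partial_{ij,w}^2e_h-\partial_{ij}^2e_0\|_{0,p}\le C\,\tilde s(e_h)$ for all $i,j$, asserting that the stabilizer controls the gap between the discrete weak Hessian and the classical Hessian of the interior component. Integrating by parts twice in the defining identity~\eqref{weak:deri} and using that $\partial_{ij}^2e_0$ is a polynomial of degree $\le k-2\le l$ on each $T$ (hence unchanged by ${\cal Q}_h$), one obtains, for all $\sigma\in W_h$ and $T\in{\cal T}_h$,
$$(\partial_{ij,w}^2e_h-\partial_{ij}^2e_0,\sigma)_T=\langle(e_0-e_b)n_i,\partial_j\sigma\rangle_{\partial T}+\langle(e_{gi}-\partial_ie_0)n_j,\sigma\rangle_{\partial T}.$$
Since $\partial_{ij,w}^2e_h-\partial_{ij}^2e_0\in W_h|_T$, its $L^p(T)$-norm is, up to an $h$-independent constant (using the $L^q$-stability of ${\cal Q}_h$ on quasi-uniform meshes), the supremum of the left-hand side over $\sigma\in W_h|_T$ with $\|\sigma\|_{0,q,T}=1$; moving the derivatives off $\sigma$ by trace and inverse inequalities yields
$$\|\partial_{ij,w}^2e_h-\partial_{ij}^2e_0\|_{0,p,T}\le C\big(h_T^{-1-1/q}\|e_0-e_b\|_{0,p,\partial T}+h_T^{-1/q}\|e_{gi}-\partial_ie_0\|_{0,p,\partial T}\big).$$
Raising to the $p$-th power, summing over $T$, and using the cancellation $p-p/q-1=0$ forced by $1/p+1/q=1$ produces exactly $\|\partial_{ij,w}^2e_h-\partial_{ij}^2e_0\|_{0,p}\le C\,\tilde s(e_h)$; the case $p=\infty$, $q=1$ is identical.

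For the Hessian part, combine this with the error equation. The identity preceding~\eqref{EQ:Feb23:101} with $\phi=u$, together with ${\cal L}u=f$ and the constraint $({\cal L}_wu_h,\sigma)=(f,\sigma)$, gives $({\cal L}_we_h,\sigma)=-\sum_{i,j}((\bar a_{ij}-a_{ij})(I-{\cal Q}_h)\partial_{ij}^2u,\sigma)$ for all $\sigma\in W_h$; by the same finite-dimensional duality as above this yields $\|{\cal Q}_h{\cal L}_we_h\|_{0,p}\le C\varepsilon\|(I-{\cal Q}_h)\nabla^2u\|_{0,p}\le C\varepsilon h^{k-1}\|u\|_{k+1,p}$. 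Writing ${\cal L}e_0={\cal L}_we_h+\sum_{i,j}a_{ij}(\partial_{ij}^2e_0-\partial_{ij,w}^2e_h)$, applying ${\cal Q}_h$, and using the uniform bound on $a_{ij}$ together with the previous step gives
$$\|{\cal Q}_h{\cal L}e_0\|_{0,p}\le\|{\cal Q}_h{\cal L}_we_h\|_{0,p}+C\tilde s(e_h)\le C\varepsilon h^{k-1}\|u\|_{k+1,p}+Ch^{k-1}\|u\|_{k+1,p}.$$
Adding this to the stabilizer estimate proves~\eqref{EQ:Feb25:104}.

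I expect the crux to be the gap estimate $\|\partial_{ij,w}^2e_h-\partial_{ij}^2e_0\|_{0,p}\le C\tilde s(e_h)$: it requires the $L^p$--$L^q$ duality to be executed carefully on the finite-dimensional space $W_h$ (invoking $h$-uniform $L^q$-stability of the $L^2$-projection) and, crucially, the powers of $h_T$ generated by the trace and inverse inequalities must cancel exactly, which happens precisely because the weights in the stabilizer $s(\cdot)$ are tuned to the conjugacy relation $1/p+1/q=1$.
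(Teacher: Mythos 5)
Your proof is correct and follows essentially the same route as the paper: it splits $\|e_h\|_{2,p,h}$ into the stabilizer part (handled via Minkowski plus Theorem~\ref{theo:001}) and the Hessian part (handled via the integration-by-parts identity for $\partial_{ij,w}^2$, trace and inverse inequalities, the commutative relation \eqref{QQ}, the constraint \eqref{EQ:Var:002}, and $L^p$--$L^q$ duality). The only organizational difference is that you extract the intermediate bound $\|\partial_{ij,w}^2 e_h-\partial_{ij}^2 e_0\|_{0,p}\le C\tilde s(e_h)$ as a standalone norm estimate via local finite-dimensional duality, whereas the paper keeps this in duality form as \eqref{EQ:Feb25:106} (with the $a_{ij}$'s already contracted in) and only invokes $L^p$--$L^q$ duality once at the very end; both rely on exactly the same cancellation of $h$-powers enforced by the stabilizer weights.
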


\begin{proof}
First, we note that
 \begin{equation}\label{esti:s}
   \tilde s(u_h-Q_hu)\le \tilde s(u_h)+\tilde s(Q_hu)\le C  h^{k-1}\|u\|_{k+1,p},\ \ \forall p\ge 1.
 \end{equation}
   Second, for any function  $\phi\in L^q(\Omega)$ and weak function $v=\{v_0, v_b, {\bf v}_g\}\in V_h^0$, we have from \eqref{weak:deri} and the integration by parts that
\begin{eqnarray*}
(\LL_w v, \phi)_T&=&\sum_{i,j=1}^d(a_{ij} \partial_{ij,w}^2 v, \phi)_T
=\sum_{i,j=1}^d (\partial_{ij,w}^2 v, {\cal Q}_h( a_{ij}\phi))_T\\
&=&\sum_{i,j=1}^d \left( (\partial_{ij}^2 v_0, {\cal Q}_h( a_{ij}\phi))_T + \langle v_{gi}-\partial_i v_0, {\cal Q}_h( a_{ij}\phi)n_j\rangle_\pT\right)\\
& & \; -\sum_{i,j=1}^d
\langle v_b- v_0, \partial_j ({\cal Q}_h( a_{ij}\phi)) n_i\rangle_\pT\\
&=& (\LL v_0, \phi)_T + \sum_{i,j=1}^d\left(\langle v_{gi}-\partial_i v_0, {\cal Q}_h( a_{ij}\phi)n_j\rangle_\pT
-\langle v_b- v_0, \partial_j ({\cal Q}_h( a_{ij}\phi)) n_i\rangle_\pT\right),
 \end{eqnarray*}
 which gives, together with the trace and inverse inequality,
\begin{equation}\label{EQ:Feb25:106}
| (\LL_w v, \phi) - (\LL v_0, \phi) | \leq C \tilde s(v) \sum_{i,j=1}^d \|{\cal Q}_h(a_{ij}\phi)\|_{0,q}, \ \ \forall \phi\in L^q(\Omega).
\end{equation}
In particular, we have
\begin{eqnarray*}
| ({\cal Q}_h\LL_w v, \phi) - ({\cal Q}_h\LL v_0, \phi) | &=& | (\LL_w v, {\cal Q}_h\phi) - (\LL v_0, {\cal Q}_h\phi) |\\
& \leq & C \tilde s(v) \|\phi\|_{0,q}.
\end{eqnarray*}
By choosing $v=u_h-Q_hu$ in the above inequality we obtain
\begin{equation}\label{uu0}
   |({\cal Q}_h\LL (u_0-Q_0u, \phi) | \le C  | ({\cal Q}_h\LL_w (u_h-Q_hu), \phi) |+ C\tilde s(u_h-Q_hu) \|\phi\|_{0,q}.
\end{equation}
On the other hand, by \eqref{EQ:Var:002} and \eqref{QQ}, we get
\begin{eqnarray}\label{11}
\begin{split}
\left|({\cal Q}_h \LL_w (u_h-Q_h u), \phi)\right|&=\left|(\LL_w (u_h-Q_h u), {\cal Q}_h \phi)\right|&\\
&=\left|(f, {\cal Q}_h\phi) -\sum_{i,j=1}^d (a_{ij} {\cal Q}_h\partial_{ij}^2 u, {\cal Q}_h\phi)\right|&\\
&=\left|\sum_{i,j=1}^d(a_{ij}(I-{\cal Q}_h)\partial_{ij}^2 u, {\cal Q}_h\phi)\right|
\leq C h^{k-1}\|u\|_{k+1,p}\|\phi\|_{0,q}.&
\end{split}
\end{eqnarray}
 Plugging the above estimate and \eqref{esti:s} into \eqref{uu0}, we have
\[
   |({\cal Q}_h\LL (u_0-Q_0u), \phi) | \le C  h^{k-1}\|u\|_{k+1,p}\|\phi\|_{0,q},
\]
which leads to
\[
   \|{\cal Q}_h\LL (u_0-Q_0u)\|_{0,p}\le C h^{k-1}\|u\|_{k+1,p}.
\]
Then the desired result follows. This completes the proof of the theorem.
\end{proof}

\subsection{Error estimates in $L^p$ and $W^{1,p}$}

In this section,  we shall derive error estimates for $u_h$ in the usual $L^p$  and $W^{1,p}$ norms. To this end, consider the auxiliary problem that seeks an unknown function $\varphi$
such that
\begin{eqnarray}\label{dual-problem:new}
\begin{aligned}
   {\LL^*}\varphi:=\sum_{i,j=1}^d\partial_{ji}^2 (a_{ij} \varphi)=\theta,  &&  \mbox{ in} \ \Omega, \\
   \varphi=0,\ \  && \mbox{ on} \ \partial\Omega,
\end{aligned}
\end{eqnarray}
where $\theta$ is a given function in $L^q(\Omega)$. Assume the dual problem (\ref{dual-problem:new}) has the usual $W^{2,q}$-regularity in the sense that the solution is in $W^{2,q}(\Omega)$ and satisfies
\begin{equation}\label{regularity:1}
\|\varphi\|_{2,q}\leq C \|\theta\|_{0,q}.
\end{equation}

 \begin{lemma}\label{lemma:1}
    Assume that the coefficient $a_{ij}\in C^1(\Omega)$, then for any weak function $v=\{v_0,v_b, {\bf v}_g\}$,  and $\varphi\in W^{2,q}(\Omega)$ with $\varphi=0$ on $\partial \Omega$, there holds
  \begin{eqnarray*}
      (v_0, {\LL^*}\varphi)&=&\sum_{i,j=1}^d(\partial_{ij,w}^2 v, a_{ij}\varphi)-\sum_{T\in{\cal T}_h} \sum_{i,j=1}^d\langle
 a_{ij}\varphi-{\cal Q}_h(a_{ij}\varphi),  (\partial_i v_0
 -v_{gi}) n_j \rangle_{\partial T}\\
  & &  + \sum_{T\in{\cal T}_h}\sum_{i,j=1}^d\langle (v_0-v_b)n_i,  \partial_j (a_{ij}\varphi)-\partial_j {\cal Q}_h(a_{ij}\varphi) \rangle_{\partial T}.
\end{eqnarray*}
\end{lemma}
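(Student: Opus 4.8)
\emph{Sketch of proof.} The plan is to establish the identity elementwise and then sum over $\mathcal T_h$. The engine is the defining relation \eqref{weak:deri} of the discrete weak second-order derivative on $W_h$ together with two integrations by parts; the two correction sums in the statement will emerge as the mismatch between testing against $a_{ij}\varphi$ and against its $L^2$-projection $\mathcal Q_h(a_{ij}\varphi)$, which is the polynomial that \eqref{weak:deri} actually "sees."

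First, fix $T$ and a pair $(i,j)$. Since $\partial^2_{ij,w}v|_T\in W_h|_T$ and $\mathcal Q_h$ is the $L^2$-projection onto $W_h$, one has $(\partial^2_{ij,w}v,a_{ij}\varphi)_T=(\partial^2_{ij,w}v,\mathcal Q_h(a_{ij}\varphi))_T$, and \eqref{weak:deri} with the admissible test function $\psi=\mathcal Q_h(a_{ij}\varphi)$ gives
\[
(\partial^2_{ij,w}v,a_{ij}\varphi)_T=(v_0,\partial^2_{ji}\mathcal Q_h(a_{ij}\varphi))_T-\langle v_bn_i,\partial_j\mathcal Q_h(a_{ij}\varphi)\rangle_{\partial T}+\langle v_{gi},\mathcal Q_h(a_{ij}\varphi)\,n_j\rangle_{\partial T}.
\]
Next I would integrate by parts twice in the volume term to move both derivatives onto $v_0$; this is legitimate because $\mathcal Q_h(a_{ij}\varphi)|_T$ is a polynomial, and since $\partial^2_{ij}v_0|_T\in P_{k-2}(T)\subseteq W_h|_T$ the resulting volume integral $(\partial^2_{ij}v_0,\mathcal Q_h(a_{ij}\varphi))_T$ equals $(\partial^2_{ij}v_0,a_{ij}\varphi)_T$. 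Integrating by parts twice in the opposite direction, now against $a_{ij}\varphi$ itself, identifies $(\partial^2_{ij}v_0,a_{ij}\varphi)_T$ with $(v_0,\partial^2_{ji}(a_{ij}\varphi))_T$ up to boundary terms. Collecting everything, the boundary pairings built from $\mathcal Q_h(a_{ij}\varphi)$ and from $a_{ij}\varphi$ combine---via $a_{ij}\varphi-\mathcal Q_h(a_{ij}\varphi)=(I-\mathcal Q_h)(a_{ij}\varphi)$---into: (a) pairings of $(I-\mathcal Q_h)(a_{ij}\varphi)$ and of $\partial_j(I-\mathcal Q_h)(a_{ij}\varphi)$ against $v_0,\nabla v_0,v_b,v_{gi}$, together with (b) two ``bare'' pairings that still carry $a_{ij}\varphi$ itself, namely $\langle v_bn_i,\partial_j(a_{ij}\varphi)\rangle_{\partial T}$ and $\langle v_{gi},(a_{ij}\varphi)n_j\rangle_{\partial T}$.

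Then I would sum over $i,j$ and over $T\in\mathcal T_h$. On the left, $\sum_{i,j}(v_0,\partial^2_{ji}(a_{ij}\varphi))_T$ sums to $(v_0,\LL^*\varphi)$. On the right, the type-(a) terms become, after relabeling $i\leftrightarrow j$ in the appropriate summands (permissible since $a_{ij}=a_{ji}$), precisely the two correction sums in the statement---the relabeling is what turns the $\partial_i(\cdot)\,n_j$ pattern produced by the calculus into the $\partial_j(\cdot)\,n_i$ pattern displayed in the lemma. The type-(b) terms cancel upon summation: on every interior edge each such pairing occurs twice with opposite outward normals, while $v_b$ and ${\bf v}_g$ are single-valued there and the traces of $a_{ij}\varphi$ and of $\nabla(a_{ij}\varphi)$ agree from the two sides (the latter because $\nabla\varphi\in W^{1,q}(\Omega)$ has a single-valued trace on the interior skeleton); on $\partial\Omega$ one has $a_{ij}\varphi=0$ since $\varphi|_{\partial\Omega}=0$, which annihilates $\langle v_{gi},(a_{ij}\varphi)n_j\rangle$ unconditionally and, together with $v_b|_{\partial\Omega}=0$, also the remaining bare pairing. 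This produces the asserted identity.

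The main obstacle is the bookkeeping: keeping signs and index placement straight through the two integrations by parts and then reconciling the index pattern produced by calculus with the one in the statement by invoking the symmetry of $a$. A secondary point is that, since $a_{ij}$ is only $C^1$, the integrations by parts against $a_{ij}\varphi$ are to be understood through the well-posed dual problem $\LL^*\varphi=\theta\in L^q(\Omega)$ (equivalently, proved first for smooth coefficients and then passed to the limit).
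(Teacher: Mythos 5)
The paper does not include a proof of this lemma (it cites \cite{WW-mathcomp}), so there is no in-paper proof to compare against; but your argument is the standard one for such commutation-type identities in the weak Galerkin framework, and it is essentially correct. The two key moves are exactly right: first, $(\partial^2_{ij,w}v,a_{ij}\varphi)_T=(\partial^2_{ij,w}v,\mathcal Q_h(a_{ij}\varphi))_T$ because the discrete weak Hessian lives in $W_h|_T$, which lets you invoke \eqref{weak:deri} with the polynomial test function $\mathcal Q_h(a_{ij}\varphi)$; second, after two elementwise integrations by parts the volume term $(\partial^2_{ij}v_0,\mathcal Q_h(a_{ij}\varphi))_T$ can be freely swapped back to $(\partial^2_{ij}v_0,a_{ij}\varphi)_T$ because $\partial^2_{ij}v_0|_T\in P_{k-2}(T)\subseteq W_h|_T$. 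Reassembling and integrating by parts against $a_{ij}\varphi$ itself yields exactly the two correction sums plus the two bare pairings, and your cancellation argument for the bare terms (flip of normal and single-valuedness across interior faces, $\varphi|_{\partial\Omega}=0$ and $v_b|_{\partial\Omega}=0$ on the boundary) is the right one.

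Two small remarks. The relabeling $i\leftrightarrow j$ via the symmetry $a_{ij}=a_{ji}$ is only needed if you perform the integrations by parts in the order that does not mirror the index placement in \eqref{weak:deri}; if you instead peel off $\partial_i$ first (producing $\langle v_0 n_i,\partial_j\psi\rangle$ and $\langle\partial_i v_0,\psi n_j\rangle$), the boundary terms pair directly with $-\langle v_b n_i,\partial_j\psi\rangle+\langle v_{gi},\psi n_j\rangle$ and no relabeling is required; either route is fine. Second, you correctly observe that the cancellation of $\sum_T\sum_{i,j}\langle v_b n_i,\partial_j(a_{ij}\varphi)\rangle_{\partial T}$ on the boundary requires $v_b|_{\partial\Omega}=0$, which is not literally in the hypotheses ``any weak function''; the lemma is applied in the paper only to $e_h\in V^0_h$, so this condition is implicitly in force and worth stating explicitly if you write this up. You also rightly flag that the elementwise integrations by parts against $a_{ij}\varphi$ strictly need more regularity than $a_{ij}\in C^1$; the paper in fact assumes $a_{ij}|_T\in W^{2,\infty}(T)$ in Theorem \ref{theo:1}, under which everything you wrote is rigorous elementwise.
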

Here we omit the proof since it  has been proved in \cite{WW-mathcomp}.

\begin{theorem}\label{theo:1} Assume $u\in W^{k+1,p}(\Omega)$ and let $u_h$ be its numerical solution arising from (\ref{min:uh-new}) with $p\ge 1$.  Assume that the coefficient $a_{ij}\in C^1(\Omega)$ satisfies $a_{ij}|_{T}\in W^{2,\infty}(T)$ for all $T\in {\cal T}_h, i,j\le d$, and the dual problem \eqref{dual-problem:new} has the $W^{2,q}(\Omega)$ regularity with the a priori estimate \eqref{regularity:1}. Then, the following estimate holds true
\begin{equation}\label{es:2}
   \|u_h-u\|_{0,p}\le C h^{k+1}\|u\|_{k+1,p}, \qquad k\ge 2,
\end{equation}
provided that $P_1(T)\subset W_h(T)$ for all $T\in {\cal T}_h$ and the mesh size $h$ is sufficiently small. In the case that $W_h(T)$ does not contain all the linear functions, the above estimate should  be replaced by
\begin{equation}\label{es:3}
   \|u_h-u\|_{0,p}\le C h^{k}\|u\|_{k+1,p}, \qquad k\ge 2.
\end{equation}
\end{theorem}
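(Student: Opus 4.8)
## Proof proposal

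The plan is to estimate $\|u_h - u\|_{0,p}$ by a duality argument based on the adjoint problem \eqref{dual-problem:new}. Given $\theta\in L^q(\Omega)$ with $\|\theta\|_{0,q}\le 1$, let $\varphi\in W^{2,q}\cap W^{1,q}_0$ solve $\LL^*\varphi=\theta$, so that $\|\varphi\|_{2,q}\le C\|\theta\|_{0,q}$ by the assumed regularity \eqref{regularity:1}. Writing $e_0 = u_0 - Q_0 u$ (the interior component of $u_h - Q_h u$) and decomposing $\|u_h-u\|_{0,p}\le \|u_0 - Q_0 u\|_{0,p} + \|Q_0 u - u\|_{0,p}$, the second term is $O(h^{k+1}\|u\|_{k+1,p})$ by the standard approximation property of $Q_0$, so everything reduces to bounding $(e_0,\theta)$. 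I would apply Lemma~\ref{lemma:1} with $v = u_h - Q_h u$ to express
$$
(e_0, \theta) = (e_0, \LL^*\varphi) = \sum_{i,j}(\partial^2_{ij,w}(u_h-Q_h u), a_{ij}\varphi) + R_1 + R_2,
$$
where $R_1, R_2$ are the two boundary sums in Lemma~\ref{lemma:1} involving $a_{ij}\varphi - {\cal Q}_h(a_{ij}\varphi)$ and its tangential derivative.

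The three pieces are handled as follows. For the boundary remainders $R_1$ and $R_2$: using the trace inequality, the approximation property $\|(I-{\cal Q}_h)(a_{ij}\varphi)\|_{0,q,T} + h_T\|\nabla(I-{\cal Q}_h)(a_{ij}\varphi)\|_{0,q,T} \le C h_T^{\min(2,l+1)}\|a_{ij}\varphi\|_{\min(2,l+1),q,T}$ (where $l$ is the polynomial degree of $W_h$, using $a_{ij}\in W^{2,\infty}$), together with Hölder's inequality pairing against $(\partial_i v_0 - v_{gi})$ and $(v_0-v_b)$ on $\partial T$, one bounds $|R_1| + |R_2| \le C h^{\min(2,l+1)-1}\tilde s(u_h - Q_h u)\,\|\varphi\|_{2,q}$ — note the scaling powers of $h_T$ in $\tilde s$ are exactly matched to those produced by the trace inequality. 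Invoking \eqref{EQ:Feb25:104}, $\tilde s(u_h-Q_h u)\le \|u_h-Q_h u\|_{2,p,h}\le C h^{k-1}\|u\|_{k+1,p}$, so $|R_1|+|R_2| \le C h^{k-1+\min(2,l+1)-1}\|u\|_{k+1,p}$, which is $O(h^{k+1}\|u\|_{k+1,p})$ precisely when $l\ge 1$, i.e. when $P_1(T)\subset W_h(T)$, and only $O(h^k\|u\|_{k+1,p})$ otherwise. This is the place where the dichotomy between \eqref{es:2} and \eqref{es:3} appears. For the volume term, I would use the commutative property \eqref{QQ} to write $\partial^2_{ij,w}(u_h - Q_h u) = \partial^2_{ij,w}u_h - {\cal Q}_h\partial^2_{ij}u$, then rewrite $(\partial^2_{ij,w}(u_h-Q_h u), a_{ij}\varphi) = (\partial^2_{ij,w}(u_h-Q_h u), {\cal Q}_h(a_{ij}\varphi))$ since the weak Hessian lies in $W_h$, and use the constraint equation \eqref{EQ:Var:002}: $\sum_{i,j}(a_{ij}\partial^2_{ij,w}u_h, {\cal Q}_h\phi) = (\LL_w u_h, {\cal Q}_h\phi) = (f,{\cal Q}_h\phi)$ for $\phi\in L^q$. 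Replacing $f = \LL u = \sum_{i,j}a_{ij}\partial^2_{ij}u$ and combining with the $-{\cal Q}_h\partial^2_{ij}u$ term, the volume contribution collapses to $\sum_{i,j}((a_{ij}-\bar a_{ij})(I-{\cal Q}_h)\partial^2_{ij}u, {\cal Q}_h\varphi)$-type terms, exactly as in \eqref{11} and \eqref{EQ:Feb23:101}; these are bounded by $C h^{k-1}\|u\|_{k+1,p}\cdot C\varepsilon\|\varphi\|_{0,q}$ with $\varepsilon$ small. Wait — this only gives $O(\varepsilon h^{k-1})$, not enough; so instead I would keep the full expression $\sum_{i,j}(a_{ij}(I-{\cal Q}_h)\partial^2_{ij}u, {\cal Q}_h\varphi)$ and move one factor: since $(I-{\cal Q}_h)$ is an $L^2$ projection, $(a_{ij}(I-{\cal Q}_h)\partial^2_{ij}u, {\cal Q}_h\varphi) = ((I-{\cal Q}_h)\partial^2_{ij}u,\, (I-{\cal Q}_h)(a_{ij}{\cal Q}_h\varphi))$, and now both factors carry approximation powers: $\|(I-{\cal Q}_h)\partial^2_{ij}u\|_{0,p}\le C h^{\min(k-1,l+1)}\|u\|_{k+1,p}$ and $\|(I-{\cal Q}_h)(a_{ij}{\cal Q}_h\varphi)\|_{0,q}\le C h^{\min(2,l+1)}\|\varphi\|_{2,q}$. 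Taking the product and optimizing the exponents against the target $h^{k+1}$ again reproduces the $l\ge 1$ versus $l=0$ dichotomy.

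Collecting the three estimates gives $|(e_0,\theta)|\le C h^{k+1}\|u\|_{k+1,p}$ (resp. $h^k$) uniformly over $\|\theta\|_{0,q}\le 1$, hence $\|e_0\|_{0,p} = \sup_{\|\theta\|_{0,q}\le 1}(e_0,\theta) \le C h^{k+1}\|u\|_{k+1,p}$ (resp. $h^k$), and adding $\|Q_0 u - u\|_{0,p}\le C h^{k+1}\|u\|_{k+1,p}$ finishes \eqref{es:2} and \eqref{es:3}. The main obstacle is bookkeeping the $h$-powers carefully at two points: (i) making sure the trace/inverse inequalities produce exactly the weights built into $\tilde s(\cdot)$ and $\|\cdot\|_{2,p,h}$ so that Theorem~\ref{theo:001} and \eqref{EQ:Feb25:104} can be applied verbatim; and (ii) in the volume term, recognizing that one must exploit the orthogonality of ${\cal Q}_h$ to put approximation power on \emph{both} factors (rather than settling for the cruder $C\varepsilon$ bound used for uniqueness in Theorem~\ref{theo:4.1}), and correctly reading off how $\min(2,l+1)$ and $\min(k-1,l+1)$ interact with the cases $l=k-2$ and $l=k-1$ in the definition of $W_h$. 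A secondary care point is that all Hölder pairings must be the $(p,q)$ split, since we are no longer in the Hilbert setting.
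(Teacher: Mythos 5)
Your overall strategy is the same as the paper's: duality through the adjoint problem \eqref{dual-problem:new}, the decomposition of $(e_0,\theta)$ via Lemma~\ref{lemma:1} applied to $e_h=u_h-Q_hu$, a Hölder-and-trace treatment of the two boundary sums (the paper's $I_3,I_4$) using the weights built into $\tilde s(\cdot)$ together with \eqref{EQ:Feb25:104}, and, for the interior piece, the correct recognition that the crude $C\varepsilon$ bound from the uniqueness argument is too weak and that one must instead exploit $L^2$-orthogonality of ${\cal Q}_h$ to put approximation power on \emph{both} factors, reproducing the paper's $I_2 = \sum_{i,j}\bigl((I-{\cal Q}_h)\partial_{ij}^2 u,\ (I-{\cal Q}_h)(a_{ij}{\cal Q}_h\varphi)\bigr)$ and yielding the $l\ge1$ versus $l=0$ dichotomy. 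That is the right route.

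There is, however, a genuine gap in your handling of the interior term. You write the volume contribution as $\sum_{i,j}(\partial^2_{ij,w}e_h,\ {\cal Q}_h(a_{ij}\varphi))$ (correct, since $\partial^2_{ij,w}e_h\in W_h$), but then you invoke the constraint \eqref{EQ:Var:002} in the form $\sum_{i,j}(a_{ij}\partial^2_{ij,w}u_h,\ {\cal Q}_h\varphi)=(f,{\cal Q}_h\varphi)$, which silently replaces ${\cal Q}_h(a_{ij}\varphi)$ by $a_{ij}{\cal Q}_h\varphi$. The projection ${\cal Q}_h$ does not commute with multiplication by $a_{ij}$, and the discrepancy is not negligible: since $\partial^2_{ij,w}e_h\in W_h$,
\begin{equation*}
\sum_{i,j}\bigl(\partial^2_{ij,w}e_h,\ {\cal Q}_h(a_{ij}\varphi)\bigr)-\sum_{i,j}\bigl(a_{ij}\partial^2_{ij,w}e_h,\ {\cal Q}_h\varphi\bigr)
= \bigl(\LL_w e_h,\ \varphi-{\cal Q}_h\varphi\bigr),
\end{equation*}
which is exactly the term the paper isolates as $I_1$. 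This contribution is of the \emph{same} order $h^{k+m}\|u\|_{k+1,p}\|\varphi\|_{m+1,q}$ as the $I_2$ piece you retain, so it cannot be discarded. It must be estimated on its own — the paper does this via the identity behind \eqref{EQ:Feb25:106} (elementwise integration by parts, then trace and inverse inequalities), giving $|(\LL_w e_h,\varphi-{\cal Q}_h\varphi)| \lesssim h^{m+1}\bigl(\|{\cal Q}_h\LL e_0\|_{0,p}+\tilde s(e_h)\bigr)\|\varphi\|_{m+1,q}$, and then \eqref{EQ:Feb25:104} closes the estimate. You already deploy exactly this machinery for the boundary remainders $R_1, R_2$, so incorporating $I_1$ is a mechanical fix, but as written your proposal accounts only for $I_2$ and the proof is incomplete without it.
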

\begin{proof}
Set
\[
   e_h=u_h-Q_hu=\{u_0-Q_0u,u_b-Q_bu, {\bf u}_g-{\bf Q}_gu\}=\{e_0,e_b, {\bf e}_g\}.
\]
For any given function $\theta\in L^q(\Omega)$, let $\varphi$ be the solution of \eqref{dual-problem:new}.  From Lemma \ref{lemma:1} we have
 \begin{eqnarray*}
 (\theta, e_0)&=&\sum_{i,j=1}^d(\partial_{ij,w}^2 e_h, a_{ij}\varphi)-\sum_{T\in{\cal T}_h} \sum_{i,j=1}^d\langle
 a_{ij}\varphi-{\cal Q}_h(a_{ij}\varphi),  (\partial_i e_0
 -e_{gi}) n_j \rangle_{\partial T}\\
  & &  + \sum_{T\in{\cal T}_h}\sum_{i,j=1}^d\langle (e_0-e_b)n_i,  \partial_j (a_{ij}\varphi)-\partial_j {\cal Q}_h(a_{ij}\varphi) \rangle_{\partial T}\\
 &=&(\LL_w e_h,  \varphi-{\cal Q}_h\varphi)+(\LL_w e_h, {\cal Q}_h\varphi)+I_3+I_4=I_1+I_2+I_3+I_4.
 \end{eqnarray*}
We next estimate $I_i,i\le 4$  respectively.  By \eqref{EQ:Feb25:106}) and \eqref{EQ:Feb25:104}, and the property of the $L^2$ projection operator ${\cal Q}_h$, we have
\begin{eqnarray*}
|I_1|= |(\LL_w e_h, \varphi-{\cal Q}_h \varphi)| & \lesssim & |(\LL e_0, \varphi-{\cal Q}_h \varphi)|+\tilde s(e_h) \|\varphi-{\cal Q}_h \varphi\|_{0,q}\\
 &\le & C h^{m+1} ( \|\LL e_0\|_{0,p} + \tilde s(e_h)) \|\varphi\|_{m+1,q}\\
&\leq &   C h^{k+m}\|u\|_{k+1,p}\|\varphi\|_{m+1,q}.
 \end{eqnarray*}
Here $m$ is an integer with $0\le m\le 1$.
As to $I_2$, we have from \eqref{QQ}
\begin{eqnarray*}
I_2=(\LL_w e_h, {\cal Q}_h\varphi) &=& (f, {\cal Q}_h \varphi) - (\LL_w Q_h u, {\cal Q}_h \varphi) \\
 &=& (\LL u, Q_h \varphi) -\sum_{i,j=1}^d (a_{ij} {\cal Q}_h\partial_{ij}^2u, {\cal Q}_h \varphi)\\
&=& \sum_{i,j=1}^d(a_{ij} (I - {\cal Q}_h) \partial_{ij}^2 u, {\cal Q}_h \varphi)\\
&=& ((I - {\cal Q}_h) \partial_{ij}^2 u, (I-{\cal Q}_h) (a_{ij} {\cal Q}_h \varphi)).
\end{eqnarray*}
Thus,
\begin{equation}\label{EQ:Feb25:107}
  |I_2|\leq Ch^{m+1} \|(I - {\cal Q}_h) \partial_{ij}^2 u\|_{0,p} \|\varphi\|_{m+1,q}\leq Ch^{k+m}\|u\|_{k+1,p} \|\varphi\|_{m+1,q}.
\end{equation}
   By the Cauchy-Schwartz inequality and the trace inequality,  we have the following estimate for $I_3$ when $p\in [1,\infty)$,
\begin{eqnarray*}
\left|I_3\right| & \lesssim & \left(\sum_{T\in{\cal T}_h} h_T^{(1-p)}\int_{\pT}|\nabla e_0-{\bf e}_g|^pds\right)^{\frac{1}{p}}
  \left(\sum_{T\in{\cal T}_h} h_T^{(p-1)q/p}\int_{\pT}|\varphi-{\cal Q}_h \varphi|^qds\right)^{\frac{1}{q}}\\
&\lesssim &  \|e_h\|_{2,p,h} (\|\varphi-{\cal Q}_h
\varphi\|_{0,q}+h\|\nabla(\varphi-{\cal Q}_h \varphi)\|_{0,q})\\
& \lesssim & h^{k+m}\|u\|_{k+1,p} \|\varphi\|_{m+1,q},
 \end{eqnarray*}
  and for $p=\infty$,
 \begin{eqnarray*}
     \left|I_3\right| & \lesssim & \sup_{T\in{\cal T}_h} h_{T}^{-1}\|\nabla e_0-{\bf e}_g\|_{0,\infty,\partial T}\  \sum_{T\in{\cal T}_h}\int_{\pT}h_T|\varphi-{\cal Q}_h \varphi|ds \\
&\lesssim &  \|e_h\|_{2,\infty,h} (\|\varphi-{\cal Q}_h
\varphi\|_{0,1}+h\|\nabla(\varphi-{\cal Q}_h \varphi)\|_{0,1})\\
& \lesssim & h^{k+m}\|u\|_{k+1,\infty} \|\varphi\|_{m+1,1}.
 \end{eqnarray*}
 The same argument can be applied to yield the following estimate for $I_4$:
 \[
    |I_4|\lesssim  h^{k+m}\|u\|_{k+1,p}\|\varphi\|_{m+1,q},\ \ \forall p\ge 1.
 \]
Combining the above estimates yields
\begin{equation}\label{eq:12}
|(e_0, \theta)| \lesssim h^{k+m}\|u\|_{k+1,p}\|\varphi\|_{m+1,q},\ \ 0\le m\le 1.
\end{equation}
Note that when $P_1(T)\subset W_h(T)$ and  $a_{ij}|_{T}\in W^{2,\infty}(T)$, the above inequality holds true with $m=1$. The desired error estimate \eqref{es:2} then follows from
  the above inequality and the regularity \eqref{regularity:1}. If $W_h(T)$ does not include all the linear functions, then one would only have
\[
     \|{\cal Q}_h \varphi- \varphi\|_{0,q}\le C h\|\varphi\|_{1,q},
\]
which implies that \eqref{eq:12} is valid only with $m=0$ and so is the error estimate \eqref{es:3}.
This completes the proof.
\end{proof}

\begin{theorem}\label{theo:2} Assume the exact solution satisfies
$u\in W^{k+1,p}(\Omega)$ and let $u_h$ be its numerical solution
arising from (\ref{min:uh-new}) with $p\ge 1$.  Assume that the coefficient $a_{ij}\in C^1(\Omega)$, and the dual problem \eqref{dual-problem:new}
has the $W^{2,q}(\Omega)$ regularity with the a priori estimate \eqref{regularity:1}.
Then the following error estimate holds true
\begin{equation}\label{est:3}
   \|\nabla (u_0-u)\|_{0,p}\le C h^{k}\|u\|_{k+1,p}.
\end{equation}
\end{theorem}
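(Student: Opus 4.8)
The plan is to use the same duality device employed in the proof of the $L^p$ estimate, but now testing against a dual problem whose right-hand side is chosen to extract the gradient error $\nabla(u_0-u)$ rather than the function value. Concretely, given $\theta\in L^q(\Omega)$, I would consider an auxiliary problem of the form $\LL^*\varphi = \operatorname{div}(\text{something involving }\theta)$, or more directly work with the quantity $(\nabla(u_0-Q_0u),\mathbf{w})$ for $\mathbf{w}\in [L^q(\Omega)]^d$ and integrate by parts to move the gradient onto a solution of the divergence-form adjoint. The first step is therefore to set up the right dual problem: one wants $\varphi$ with $\|\varphi\|_{2,q}\le C\|\theta\|_{0,q}$ (or $\|\varphi\|_{1,q}$, since here we only expect one order less than the $L^p$ case) such that pairing $\LL^*\varphi$ against $e_0=u_0-Q_0u$ reproduces, after integration by parts over each element and summation, the gradient error plus boundary terms controlled by $\tilde s(e_h)$ and the discrete $W^{2,p}$ norm.

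The second step is to invoke Lemma~\ref{lemma:1} to rewrite $(\theta,e_0)$ (or its gradient analogue) as $\sum_{i,j}(\partial^2_{ij,w}e_h, a_{ij}\varphi)$ plus the two families of boundary terms, exactly as in the proof of Theorem~\ref{theo:1}. Then decompose $(\LL_w e_h,\varphi) = (\LL_w e_h,\varphi-{\cal Q}_h\varphi) + (\LL_w e_h,{\cal Q}_h\varphi) = I_1+I_2$ and keep the boundary terms $I_3,I_4$. The estimates for $I_1$ and $I_2$ proceed as before via \eqref{EQ:Feb25:106}, the commutative property \eqref{QQ}, equation \eqref{EQ:Var:002}, and the bound \eqref{EQ:Feb25:104} on $\|e_h\|_{2,p,h}$, while $I_3,I_4$ are handled by Hölder/trace inequalities together with \eqref{EQ:Feb25:104}. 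The key difference is that, because we are now measuring the gradient, the relevant approximation estimate for the dual solution is only $\|\varphi-{\cal Q}_h\varphi\|_{0,q}+h\|\nabla(\varphi-{\cal Q}_h\varphi)\|_{0,q}\lesssim h\|\varphi\|_{1,q}$ — i.e.\ we can only afford $m=0$ in the notation of the previous proof — which degrades the rate from $h^{k+1}$ to $h^{k}$, consistent with the statement \eqref{est:3}. One then combines all four bounds to get $|(\nabla e_0,\mathbf{w})|\lesssim h^k\|u\|_{k+1,p}\|\mathbf{w}\|_{0,q}$, and finishes with the triangle inequality $\|\nabla(u_0-u)\|_{0,p}\le \|\nabla e_0\|_{0,p}+\|\nabla(Q_0u-u)\|_{0,p}$ and the standard projection estimate $\|\nabla(Q_0u-u)\|_{0,p}\le Ch^k\|u\|_{k+1,p}$ for $k\ge 2$.

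The main obstacle I anticipate is the precise setup of the dual/duality pairing that produces $\nabla e_0$ cleanly: one must decide whether to test $\LL^*\varphi$ against $e_0$ and then differentiate, or to build a vector-valued adjoint problem, and in either case to verify that the extra integration by parts across element interfaces produces only boundary jump terms that are absorbed by $\tilde s(e_h)$ and do not require more regularity on $a_{ij}$ than $C^1(\Omega)$ (which is all that is assumed here, as opposed to the $W^{2,\infty}$ hypothesis in Theorem~\ref{theo:1}). A secondary point to get right is bookkeeping the powers of $h$ so that the loss of exactly one order — and not two — is transparent; this hinges on still being able to use the full strength of $\|e_h\|_{2,p,h}\lesssim h^{k-1}\|u\|_{k+1,p}$ from \eqref{EQ:Feb25:104} while only paying a single factor of $h$ from the dual approximation. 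Once the pairing is correctly arranged, the remaining computations are parallel to those already carried out for Theorem~\ref{theo:1}.
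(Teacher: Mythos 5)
Your proposal is correct and follows essentially the same route as the paper. The paper resolves your anticipated obstacle exactly as your first sketch suggests: fix a vector field $\eta\in[C^1(\Omega)]^d$ vanishing on ${\cal E}_h$, solve the adjoint problem \eqref{dual-problem:new} with $\theta=-\nabla\cdot\eta$, and use $(\nabla e_0,\eta)=-(e_0,\nabla\cdot\eta)=(e_0,\theta)$, after which the decomposition into the same $I_1,\dots,I_4$ of Theorem~\ref{theo:1}, the forced choice $m=0$, density of such $\eta$ in $[L^q(\Omega)]^d$, and the triangle inequality give \eqref{est:3} — so no extra integration by parts across element interfaces or added regularity on $a_{ij}$ is needed.
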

\begin{proof} For any given vector field ${\bf \eta}\in [C^1(\Omega)]^d$ with $\eta=0$ on ${\cal E}_h$, let $  \varphi$ be the solution of the dual problem \eqref{dual-problem:new} with
$\theta=-\nabla\cdot \eta$. It follows that
\begin{eqnarray*}
   (\nabla e_0, {\bf \eta})=-(e_0, \nabla\cdot {\bf \eta})=(e_0, \theta)
      =I_1+I_2+I_3+I_4,
\end{eqnarray*}
Here $I_i, i\le 4$ are exactly the same as in the proof of Theorem \ref{theo:1}. By  choosing $m=0$  in \eqref{eq:12}, we get
  \[
      |(\nabla e_0, {\bf \eta}) |\le C h^{k}\|u\|_{k+1,p}\|\varphi\|_{1,q}\le C  h^{k}\|u\|_{k+1,p} \|\eta\|_{0,q}.
  \]
As the set of all such vector fields $\eta$ is dense in $[L^q(\Omega)]^d$, we thus have
  \[
     \|\nabla e_0\|_{0,p}\le C h^{k}\|u\|_{k+1,p}.
  \]
    The error estimate \eqref{est:3} then follows from the triangle inequality and the approximation property of the $L^2$ projection operator. This completes the proof.
\end{proof}

\section{Algorithm and Implementation}

In this section, we develop a fixed-point iterative algorithm for solving the minimization problem \eqref{min:uh-new} based on the $L^p$ stabilizer for $p\neq 2$. For simplicity,  we present only the case $p=1$. Other cases can be handled in similar manners.

\subsection{An equivalent matrix form of the minimization problem}
We  first  reformulate the constrained minimization problem \eqref{min:uh-new} as an equivalent discrete minimization problem in $\bR^{N}$.

Recalling the definition of $V^0_h$ in \eqref{Vh}-\eqref{vh0}, a function $v_h\in V^0_h$ can be represented as a triple 
\[
    v_h=\{v_0,v_b,{\bf{v_g}}\},\ {\rm with}\ 
    {\bf{v_g}}=(v_{g1},\ldots,v_{gd}).
\]
Suppose that we have chosen bases $\{\phi_{0,n}\}_{n=1}^{N_1}$,$\{\phi_{b,n}\}_{n=1}^{N_2}$, $\{\phi^j_{g,n}\}_{n=1}^{N_3}$ for $v_0,v_b, v_{gj}, 1\le j\le d$, respectively. With the bases, we have that
\begin{equation}\label{6.1}
v_0=\sum_{i=1}^{N_1}v_{0,i}\phi_{0,i},\ \ 
v_b=\sum_{i=1}^{N_2}v_{b,i}\phi_{b,i},\ \ 
 {v_{gj}}=\sum_{i=1}^{N_3} v^j_{g,i} \phi^j_{g,i},
\end{equation}
where $v_{0,i},v_{b,i},v^j_{g,i}$ are real numbers.
Define $N:=N_1+N_1+dN_3$ and 
\begin{equation}\label{6.2}
    {\bf v}:=(v_1,\ldots,v_N)^T=( {\bf v}_0,{\bf v}_b,{\bf v}_{g1},\ldots, {\bf v}_{gd})^T,
\end{equation}
   where 
\begin{equation}\label{6.3}
    {\bf v}_0=(v_{0,1},\ldots, v_{0,N_1}),\ 
    {\bf v}_b=(v_{b,1},\ldots, v_{b,N_2}),\ 
    {\bf v}_{gj}=(v^j_{g,1},\ldots, v^j_{g,N_3}),\ 1\le j\le d. 
\end{equation}
   We call ${\bf v}$  the coefficient vector of $v_h.$
  Clearly, the vector ${\bf v}$ is uniquely 
  determined by the function $v_h$ and 
  vice versa. 
 Let 
\[
{\bf\phi}:=(\phi_1,\ldots,\phi_n)^T=(\{\phi_{0,n}\}_{n=1}^{N_1},
    \{\phi_{b,n}\}_{n=1}^{N_2}, \{\phi^j_{g,n}\}_{n=1}^{N_3})^T,
\]
 and $\{\psi_m\}_{m=1}^M$ be a basis of  $W_h$. With the definition of $S_h$ in \eqref{sh}, a function $v_h\in S_h\subset V_h^0$ has the following variational equation
\[
   ({\cal L}_w v_h, \psi_m) = (f, \psi_m),\ \  \mbox{for all} \ \  m=1,2,\ldots M.
\]
We introduce the matrix
\begin{equation}\label{a}
 A:=(b_{m,n})_{M\times N},\ \ \mbox{with}\ \ b_{m,n}:=\sum_{i,j=1}^d(a_{ij}{\cal L}_w\phi_n,\psi_m),\ \ 1\le m\le M, 1\le n\le N.
\end{equation}
The variational equation above becomes
\begin{equation}\label{af}
    A {\bf{v}}={\bf{f}}, \ \ \mbox{where}\ \ {\bf{f}}:=(f_1,\ldots,f_M)^{T},\ \ \mbox{with} \ \ f_m:=(f,\psi_m),\ \ 1\le m\le M,
\end{equation}
In other words, the function space  $S_h$ defined in \eqref{sh} can be represented as
\[
   S_h=\left\{v_h\in V_h^0:  A\bf v=f\right\}.
\]
Note that the discrete bilinear form $({\cal L}_w v_h, \sigma)$ satisfies the {\it inf-sup} condition (see \cite{WW-mathcomp})
and thus, the matrix $A$ is of row full rank. 

We denote by $N_e$ the cardinality of ${\cal E}_h$ (i.e., the number of all edges).
For any function $v_h:=\{v_0,v_b,{\bf v}_g\}$, we note that 
on each edge $e_i\in \partial T, T\in{\cal T}_h$, $(v_0-v_b)(x)$  and $(\partial_j v_0-{\bf v}_{gj})(x)$, $1\le j\le d$,  are polynomials of degree $k$, that is, $(v_0-v_b)(x), (\partial_j v_0-{\bf v}_{gj})(x) \in {\mathbb P}_{k}(e_i)$. By a scaling from $e_i$ to the reference element $(0,1)$, 
it follows that
\[
h_{T}^{1-2p}(v_0-v_b)(x)|_{e_i} =(h_{e_i})^{-1}\sum_{m=0}^{k}c_{i,m+1}t^{m}, \ \ \mbox{for all}\ \  T\in {\cal T}_h, e_i\in\partial T, t\in (0,1),
\]
where $h_{e_i}$ denotes the length of $e_i$ and
the coefficients $c_{i,m}$ are constants dependent on ${\bf v}:=(v_1,\ldots,v_N)^{T}$.
Denoting by $\bar {\bf v}_i$ the vector representation of the function $v_0-v_b$ on 
$e_i$ and  
defining  $c_i:=(c_{i,1},\ldots,c_{i,k+1})$, 
 we introduce a linear local operator ${\cal \bar B}_i: {\mathbb R}^{N_1+N_2}\rightarrow  {\mathbb R}^{k+1}$ by
\[
{\cal \bar B}_i {\bf \bar v}_i={\bf c}_i,\ 1\le i\le N_e.
\]
Let $\bar B_i\in \bR^{(k+1)\times {(N_1+N_2)}}$ be the matrix representation of the operator ${\cal \bar B}_i$. Then, we have that
\[
   \bar B_i {\bf \bar v}_i={\bf c}_i. 
\]
where $\bar B_i=h_{e_i}^{-1}h_{T}^{2p-1}I$  with $I$ the identity matrix for 
  $e_i\in\partial\Omega$, and 
 \begin{equation}\label{Bi}
 \bar B_i=h_{e_i}^{-1}h_{T}^{2p-1}\left( \begin{array}{cccccccc}
  1& 0& \cdots& 0& -1 &0&\cdots& 0\\
  0 & 1& \cdots& 0& 0 &-1&\cdots& 0\\
 \vdots&\vdots&\ddots& \vdots&\ddots&\vdots \vdots&\vdots&\\
  0&0& \cdots& 1& 0& 0&\cdots&-1
\end{array}
\right )
\end{equation}
 for $e_i\in {\cal E}_h^0$. 
Defining ${\bf c}:=(c_1,\ldots,c_{N_e})^{T}$ and going through all element $T\in{\cal T}_h$ and all edges of $T$, we obtain a global matrix $B_0\in\bR^{(k+1)N_e\times N}$ such that 
\[
B_0 {\bf v} ={\bf c}
\]
 with $B_0$ dependent on the local matrix $\bar B_i$. Likewise,  for each $j$ with $1\le j\le d$,  it follows that
\[
   h_{T}^{1-p}(\partial_j v_0-{\bf v}_{gj})(x)|_{e_i} =(h_{e_i})^{-1}\sum_{m=0}^{k}d^{j}_{i,m+1}t^{m},\ \ \mbox{for all}\ \ T\in {\cal T}_h, e_i\in\partial T, t\in (0,1). 
\]
Then we can construct the matrix $B_j$ that satisfies
\[
    B_j{\bf v} ={\bf d}^j,
\]
where ${\bf d}^j:=(d^j_1,\ldots,d^j_{N_e})^{T}$ and  $d^j_i:=(d^{j}_{i,1},\ldots,d^{j}_{i,k+1})$.

We are now ready to rewrite the constrained minimization problem \eqref{min:uh-new} in its equivalent discrete form. To this end, we define a convex function $\varphi: \bR^{(d+1)(k+1)N_e}\rightarrow \bR$ by
\begin{equation}\label{varphi}
\varphi({\bf q}):=\sum_{i=1}^{(d+1)N_e}\int_{0}^1\left|\sum_{m=0}^{k}q_{i,m+1}t^{m}\right|dt,
\end{equation}
for ${\bf q}:=(q_1,\ldots,q_{(d+1)N_e})^{T}$, with $q_i:=(q_{i,1},\ldots,q_{i,k+1})$.
By defining
\[
   B:=(B^T_0,B^T_1,\ldots,B^T_d)^T,
\]
from the definition of $s(\cdot)$ for $p=1$,  we identify $s(v_h)$ as a composition of $\varphi$ with $B$, that is,
\begin{equation}\label{ss1}
s(v_h)=\varphi (B{\bf v}).
\end{equation}
Thus, it can be verified that the constrained minimization problem \eqref{min:uh-new} is equivalent to
\begin{equation}\label{discrete-form}
   {\bf u}={\arg\min}_{\bf v\in \bR^{N}, A\bf v=\bf f} (\varphi\circ B)({\bf v}).
\end{equation}
Let $L$ denote the indicator function, defined at ${\bf x}\in {\mathbb R}^M$ by
\begin{eqnarray}\label{indicator-function}
   L({\bf x})&=&\left\{
   \begin{array}{ll}
   0, & \ {\rm if}\ {\bf x}={\bf f}, \vspace{1mm}  \\
   +\infty, & \ {\rm otherwise}.
   \end{array}\right.
\end{eqnarray}
The minimization problem \eqref{discrete-form} may be rewritten as
 \begin{equation}\label{2.5}
   {\bf u}=\arg\min_{\bf v\in \bR^{N}}\left\{ \varphi( B{\bf v})+L(A{\bf v})\right\}.
\end{equation}
Note that the domain of the indicator function $L$ is convex, which indicates that $L$ is convex.
Consequently,  \eqref{2.5} can be described as the
minimum point of the sum of two convex functions. Actually, optimization problems which minimizes the sum of
two convex functions has important applications in image and signal processing, and has been widely discussed and studied during
the past decades (see, e.g., \cite{a1,krol-li-shen-xu, li-shen-xu-zhang, li-Micchelli-sheng-xu, Micchelli-Shen-Xu, Rudin-Osher}).

\subsection{Characterization via a system of fixed-point equations}
In this subsection, we characterize the solution of \eqref{2.5} in term
of a system of fixed-point equations via the proximity operator of $\varphi$ and $L$. The system of fixed-point equations will serves as a basis for developing our iterative algorithm.

Noticing that the definition of function $\varphi$ involves the absolute value function, it is not differentiable and neither is the indicator function $L$. Thus, the standard gradient type methods are not applicable to the minimization problem \eqref{2.5}. Inspired by the work \cite{krol-li-shen-xu, li-shen-xu-zhang, li-Micchelli-sheng-xu, Micchelli-Shen-Xu}, we shall develop a fixed-point iterative algorithm for solving \eqref{2.5}, without using gradient information of the functions $\varphi$ and $L$. 

We begin with recalling the notion of the subdifferential and the proximity operator of a convex function.

\begin{Defi}
Let $\psi$ be a real-valued convex function on $\mathbb{R}^m$.
The subdifferential of $\psi$ at ${\bf x}\in \bR^m$ is defined by
\[
\partial\psi({\bf x}):=\{{\bf y}: {\bf y}\in \bR^m, \ {\rm and }\ \psi({\bf z})\ge \psi({\bf x})+({\bf y},{\bf z}-{\bf x}),\ \mbox{for all}\ {\bf z}\in \bR^m\},
\]
and the proximity operator of $\psi$ is defined for ${\bf x}\in \bR^m$ by
\[
{\rm prox}_{\psi}({\bf x}):={\arg\min}\{\frac{1}{2}({\bf v}-{\bf x}, {\bf v}-{\bf x})+\psi({\bf v}): {\bf v}\in \bR^m\}.
\]
Here $(\cdot,\cdot)$ denotes the standard inner product on $\mathbb{R}^m$.
\end{Defi}

The subdifferential of a convex function is intimately related to its proximity operator. According to \cite{Bauschke-Combettes, Micchelli-Shen-Xu}, we have the following relationship between the subdifferential and the proximity operator of a convex function:
\begin{equation}\label{relation}
{\bf y}\in \partial\psi({\bf x})\ \ \mbox{if and only if}\ \ {\bf x}={\rm prox}_{\psi}({\bf x}+{\bf y}).
\end{equation}


Appealing to relation \eqref{relation}, we  characterize the solution of \eqref{2.5} in terms of a system of fixed-point equations via the proximity operator of $\varphi$ and $L$.

\begin{proposition} \label{lemma:10}
If ${\bf u}\in {\mathbb R}^N$ is the solution of problem \eqref{2.5} with  $A\in {\mathbb R}^{M\times N}$ and $B\in {\mathbb R}^{(d+1)(k+1)N_e\times N}$,
then there exists  vectors ${\bf x}\in  {\mathbb R}^M$ and ${\bf y}\in {\mathbb R}^{(d+1)(k+1)N_e}$ such that for any positive constants $\alpha, \beta$,
\begin{eqnarray}\label{2.90}
\left\{
\begin{array}{ll}
    \beta A^T{\bf x}+ \alpha B^T{\bf y}=0, &\\
    {\bf y}=({\cal I}-{\rm prox}_{\frac{1}{\alpha}\varphi})(B{\bf u}+{\bf y}), &\\
     {\bf x}=({\cal I}-{\rm prox}_{\frac{1}{\beta} L} )(A{\bf u}+{\bf x}), &
\end{array}\right.
\end{eqnarray}
where ${\cal I}$ denotes the identity operator.  Conversely, if there exist positive constants $\alpha, \beta$
     and  vectors ${\bf u}\in {\mathbb R}^N$, ${\bf x}\in {\mathbb R}^M$, ${\bf y}\in {\mathbb R}^{(d+1)(k+1)N_e}$ satisfying
    \eqref{2.90}, then ${\bf u}$ is the solution of \eqref{2.5}.
\end{proposition}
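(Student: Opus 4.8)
The plan is to derive the system \eqref{2.90} from the first-order optimality conditions for the convex minimization problem \eqref{2.5}, using the characterization of the subdifferential via the proximity operator in \eqref{relation}. The problem \eqref{2.5} is an unconstrained minimization of the sum $F({\bf v}) := \varphi(B{\bf v}) + L(A{\bf v})$ of two convex functions. Since $L$ is the indicator function of $\{{\bf f}\}$, the effective domain of $F$ is the affine set $\{{\bf v} : A{\bf v} = {\bf f}\}$, and on this set $F$ reduces to $\varphi \circ B$; hence \eqref{2.5} is genuinely a constrained problem and a minimizer ${\bf u}$ exists (it lies in the nonempty feasible set, as $A$ has full row rank). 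The Fermat rule gives that ${\bf u}$ is a minimizer if and only if $0 \in \partial F({\bf u})$. By the subdifferential sum/chain rule for convex functions (valid here because $\varphi$ is finite-valued and continuous, so the usual constraint qualification holds), this becomes
\[
0 \in B^T \partial\varphi(B{\bf u}) + A^T \partial L(A{\bf u}).
\]
Thus there exist ${\bf p} \in \partial\varphi(B{\bf u})$ and ${\bf r} \in \partial L(A{\bf u})$ with $B^T {\bf p} + A^T {\bf r} = 0$.

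Next I would introduce the scaling by the positive constants $\alpha, \beta$: set ${\bf y} := \tfrac{1}{\alpha}{\bf p}$ and ${\bf x} := \tfrac{1}{\beta}{\bf r}$. Then $\alpha B^T {\bf y} + \beta A^T {\bf x} = 0$, which is the first equation of \eqref{2.90}. For the second equation, note that ${\bf p} \in \partial\varphi(B{\bf u})$ is equivalent, by positive homogeneity of the subdifferential in the sense $\partial(\tfrac{1}{\alpha}\varphi)(B{\bf u}) = \tfrac{1}{\alpha}\partial\varphi(B{\bf u})$, to ${\bf y} \in \partial(\tfrac{1}{\alpha}\varphi)(B{\bf u})$. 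Applying \eqref{relation} with $\psi = \tfrac{1}{\alpha}\varphi$ and ${\bf x}$ there equal to $B{\bf u}$, this says $B{\bf u} = {\rm prox}_{\frac{1}{\alpha}\varphi}(B{\bf u} + {\bf y})$, i.e. ${\bf y} = ({\cal I} - {\rm prox}_{\frac{1}{\alpha}\varphi})(B{\bf u} + {\bf y})$. The third equation follows identically with $\varphi \to L$, $B \to A$, $\alpha \to \beta$. This establishes the forward direction.

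For the converse, I would reverse each implication. Given positive $\alpha, \beta$ and vectors ${\bf u}, {\bf x}, {\bf y}$ satisfying \eqref{2.90}, the second equation together with \eqref{relation} yields ${\bf y} \in \partial(\tfrac{1}{\alpha}\varphi)(B{\bf u})$, hence $\alpha{\bf y} \in \partial\varphi(B{\bf u})$; similarly $\beta{\bf x} \in \partial L(A{\bf u})$. The first equation then gives $0 = \beta A^T{\bf x} + \alpha B^T{\bf y} \in A^T\partial L(A{\bf u}) + B^T\partial\varphi(B{\bf u}) \subseteq \partial F({\bf u})$, so ${\bf u}$ is a global minimizer of the convex function $F$, i.e. the solution of \eqref{2.5}. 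Note that ${\bf y} \in \partial(\tfrac{1}{\alpha}\varphi)(B{\bf u})$ forces $B{\bf u} \in {\rm dom}\,\varphi$ (automatic, as $\varphi$ is finite everywhere) and ${\bf x} \in \partial(\tfrac{1}{\beta}L)(A{\bf u})$ forces $A{\bf u} \in {\rm dom}\,L = \{{\bf f}\}$, so feasibility $A{\bf u} = {\bf f}$ is recovered automatically from \eqref{2.90}.

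The main obstacle — really the only subtle point — is the validity of the subdifferential chain/sum rule $\partial(\varphi\circ B + L\circ A)({\bf u}) = B^T\partial\varphi(B{\bf u}) + A^T\partial L(A{\bf u})$, since $L\circ A$ is an extended-real-valued function with nontrivial domain. I would justify it by invoking the standard qualification condition (e.g. Rockafellar): it suffices that there is a point in $\{{\bf v} : A{\bf v} = {\bf f}\}$ at which $\varphi \circ B$ is continuous, which holds because $\varphi$ is finite and continuous on all of $\mathbb{R}^{(d+1)(k+1)N_e}$ and the feasible set is nonempty by the full-row-rank property of $A$ stated earlier in the excerpt. Everything else is a direct application of \eqref{relation} and the positive homogeneity of subdifferentials under scalar multiplication, which are routine.
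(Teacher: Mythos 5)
Your proposal is correct and follows essentially the same route as the paper: Fermat's rule, the subdifferential sum/chain rule to split into $B^T\partial\varphi(B{\bf u})$ and $A^T\partial L(A{\bf u})$, scaling by $\alpha$ and $\beta$, and then converting each subdifferential inclusion to a fixed-point equation via the relation \eqref{relation}, with the converse obtained by reversing. The only (welcome) addition is that you explicitly justify the constraint qualification needed for the sum/chain rule, a point the paper's proof passes over silently.
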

\begin{proof}
Let ${\bf u}\in {\mathbb R}^N$ be the solution of problem \eqref{2.5}. According to the Fermat rule and the definition of the subdifferential, we have that
 \[
 {\bf 0}\in\partial(\varphi\circ B)({\bf u})+ \partial(L\circ A)({\bf u}).
 \]
It follows that there exist  ${\bf y_0}\in \partial(\varphi\circ B)({\bf u})$ and ${\bf y_1}\in   \partial (L\circ A)({\bf u})$ such that
${\bf y_0}+ {\bf y_1}=0$.
The chain rule
$$
\partial (\varphi\circ B)({\bf u})=B^T\partial \varphi(B {\bf u} )
$$
ensures that there exists an
\begin{equation}\label{x_0}
{\bf x}_0\in \partial\frac{1}{\alpha}\varphi(B {\bf u} )
\end{equation}
satisfying ${\bf y_0}=\alpha B^T{\bf x}_0$.
Likewise, by the chain rule
$$
\partial (L\circ A)({\bf u})=A^T\partial L(A {\bf u} ),
$$
there exists an
\begin{equation}\label{x_1}
{\bf x}_1\in \partial\frac{1}{\beta}L(A {\bf u} )
\end{equation}
such that ${\bf y_1}=\beta A^T{\bf x}_1$.
Consequently, we obtain that
\begin{equation}\label{a:00}
\alpha  B^T{\bf x}_0 +  \beta A^T{\bf x}_1 ={\bf 0}.
\end{equation}
By employing the equivalence relation \eqref{relation}, we observe that inclusions \eqref{x_0} and \eqref{x_1} are equivalent to
$$
B{\bf u}={\rm prox}_{ \frac{1}{\alpha}\varphi} (B{\bf u}+{\bf x}_0)
$$
and
$$
A{\bf u}={\rm prox}_{ \frac{1}{\beta}L} (A{\bf u}+{\bf x}_1),
$$
respectively. Or equivalently, we have that
\begin{equation}\label{AAU0}
{\bf x}_0=({\cal I}-{\rm prox}_{ \frac{1}{\alpha}\varphi})(B{\bf u}+{\bf x}_0).
\end{equation}
and
\begin{equation}\label{AAU1}
{\bf x}_1=({\cal I}-{\rm prox}_{ \frac{1}{\beta}L})(A{\bf u}+{\bf x}_1).
\end{equation}
Thus, system \eqref{2.90} of the  fixed-point equations follows from \eqref{a:00}, \eqref{AAU0} and \eqref{AAU1}.

Conversely, suppose that there exist  positive constants $\alpha, \beta$ and  vectors ${\bf u}\in {\mathbb R}^N$, ${\bf x}\in {\mathbb R}^M$, ${\bf y}\in {\mathbb R}^{(d+1)(k+1)N_e}$ satisfying the system \eqref{2.90}. According to the equivalence relation \eqref{relation}, we conclude that $\alpha {\bf y}\in  \partial \varphi  (B{\bf u})$ and $\beta {\bf x}\in \partial L (A{\bf u})$. Consequently,
\[
{\bf 0}=\alpha B^T{\bf y}+\beta A^T{\bf x}\in B^T\partial {\varphi} (B{\bf u})+ A^T\partial L (A{\bf u}).
\]
Again, using the chain rule, we obtain that
\[
{\bf 0}\in \partial (\varphi\circ B)({\bf u})+\partial (L\circ A)({\bf u}).
\]
That is,  the zero vector is in the subdifferential of the objective function at ${\bf u}$. By the Fermat rule, ${\bf u}$ is the solution of \eqref{2.5}.
\end{proof}

\subsection{Iterative algorithm}

In light of the conclusion in Proposition \ref{lemma:10},  we design an iterative algorithm for finding the solution of \eqref{2.5} based on the system \eqref{2.90} of fixed-point equations.

Suppose that initial guesses  ${\bf u}^0\in {\mathbb R}^N$, ${\bf x}^0\in {\mathbb R}^M$ and ${\bf y}^0\in {\mathbb R}^{(d+1)(k+1)N_e}$ are chosen. We construct an implicit iterative scheme from \eqref{2.90} as follows:
\begin{eqnarray}\label{al:1}
\left\{
   \begin{array}{ll}
    \beta A^T{\bf x}^{n+1}+\alpha  B^T{\bf y}^{n+1}=0, &\\
    {\bf y}^{n+1}=  B{\bf u}^{n+1}-B {\bf u}^n+({\cal I}-{\rm prox}_{\frac{1}{\alpha}\varphi})(B{\bf u}^{n}+ {\bf y}^n), &\\
     {\bf x}^{n+1}=({\cal I}-{\rm prox}_{\frac{1}{\beta} L} )(A{\bf u}^{n+1}+{\bf x}^{n+1}). &
   \end{array}\right.
\end{eqnarray}
We next re-express \eqref{al:1} in a compact form for simplicity. For a non-negative integer $n$ we define
\begin{equation}\label{vv}
      {\bf v}^{n}: =({\bf y}^n,{\bf u}^n, {\bf x}^n)^T.
\end{equation}
As shown by \eqref{al:1}, we can obtain ${\bf v}^{n+1}$ as long as  ${\bf v}^{n}$ is available by solving a linear system.
To present the linear system,  we first rewrite the third equation of \eqref{al:1} into its equivalent form. 
Note that the third equation of \eqref{al:1} involves the proximity of the  indicator function $L$, which is defined in \eqref{indicator-function}. A direct calculation from \eqref{indicator-function} and the  definition of the proximity operator confirms that
\begin{equation}\label{Au:1}
{\rm prox}_{\frac{1}{\beta} L}({\bf x} )={\bf f},\ \ \mbox{for all}\ \  {\bf x},
\end{equation}
where ${\bf f}$ is given by \eqref{af}.

We next define 
\begin{equation}\label{M1}
S: =\left( \begin{array}{ccc}
  I & -B&   {\bf 0}\\
 {\bf 0} & \alpha B^TB &  \beta A^T \\
 {\bf 0} &\beta A & {\bf 0} \\
 \end{array}
\right ),
\end{equation}
where  $I$ is the identity matrix of order $(d+1)(k+1)N_e$.

\begin{lemma} Let $A$ be a  row full rank matrix defined in \eqref{a}, and $B$ be the  matrix satisfying \eqref{ss1}.  Then for any positive integers
$\alpha$ and $\beta$,  the matrix $S$ defined in \eqref{M1} is non-singular. 
\end{lemma}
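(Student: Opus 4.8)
The plan is to show that $S\mathbf{z} = \mathbf{0}$ forces $\mathbf{z} = \mathbf{0}$, where we write $\mathbf{z} = (\mathbf{y}, \mathbf{u}, \mathbf{x})^T$ partitioned to match the block structure of $S$ in \eqref{M1}. Reading off the three block rows of $S\mathbf{z}=\mathbf{0}$ gives the system
\begin{eqnarray*}
\mathbf{y} - B\mathbf{u} &=& \mathbf{0},\\
\alpha B^T B \mathbf{u} + \beta A^T \mathbf{x} &=& \mathbf{0},\\
\beta A \mathbf{u} &=& \mathbf{0}.
\end{eqnarray*}
First I would use the first equation to eliminate $\mathbf{y} = B\mathbf{u}$, so it suffices to prove $\mathbf{u}=\mathbf{0}$ and $\mathbf{x}=\mathbf{0}$. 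From the third equation, $A\mathbf{u}=\mathbf{0}$ (since $\beta>0$), i.e.\ $\mathbf{u}$ lies in the null space of $A$.

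The key algebraic step is to pair the second equation with $\mathbf{u}$: taking the inner product of $\alpha B^T B\mathbf{u} + \beta A^T\mathbf{x}$ with $\mathbf{u}$ yields $\alpha \|B\mathbf{u}\|^2 + \beta (\mathbf{x}, A\mathbf{u}) = 0$, and since $A\mathbf{u}=\mathbf{0}$ the second term vanishes, leaving $\alpha\|B\mathbf{u}\|^2 = 0$, hence $B\mathbf{u}=\mathbf{0}$. Now I would invoke the connection established earlier in the excerpt between $B$ and the stabilizer: by \eqref{ss1}, $s(v_h) = \varphi(B\mathbf{v})$, and more precisely $B\mathbf{u}=\mathbf{0}$ together with $A\mathbf{u}=\mathbf{0}$ means the corresponding weak function $u_h$ satisfies $s(u_h)=0$ and lies in $S_h$; in particular $u_0 - u_b = 0$ and $\nabla u_0 - \mathbf{u}_g = 0$ on every $\partial T$, so the discrete weak Hessian of $u_h$ agrees with the classical Hessian of $u_0$, exactly as in the uniqueness argument of Theorem~\ref{theo:4.1}. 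Running that same argument (choosing $\sigma = {\cal Q}_h({\cal L} u_0)$ in $({\cal L} u_0, \sigma)=0$, using the continuity of $a_{ij}$ on each element, the $W^{2,p}$-regularity assumption, and the inverse/smallness of the mesh) gives $u_0 \equiv 0$, and then $u_b=0$, $\mathbf{u}_g=\mathbf{0}$ follow, i.e.\ $\mathbf{u}=\mathbf{0}$.

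It remains to show $\mathbf{x}=\mathbf{0}$. With $\mathbf{u}=\mathbf{0}$, the second equation collapses to $\beta A^T\mathbf{x} = \mathbf{0}$, so $A^T\mathbf{x}=\mathbf{0}$; since $A$ has full row rank (guaranteed by the discrete inf-sup condition cited after \eqref{af}), $A^T$ is injective, hence $\mathbf{x}=\mathbf{0}$. Combined with $\mathbf{y}=B\mathbf{u}=\mathbf{0}$, this shows $\mathbf{z}=\mathbf{0}$ and therefore $S$ is nonsingular.

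The main obstacle I anticipate is not the linear algebra bookkeeping but making rigorous the reduction from "$B\mathbf{u}=\mathbf{0}$ and $A\mathbf{u}=\mathbf{0}$" to "$\mathbf{u}=\mathbf{0}$": one must be careful that the smallness-of-mesh hypothesis is inherited here exactly as in Theorem~\ref{theo:4.1}, and that the matrix $B$ genuinely encodes the full jump information $\{u_0-u_b,\ \nabla u_0 - \mathbf{u}_g\}$ on all edges (including boundary edges, via the scaled identity blocks $\bar B_i$), so that $B\mathbf{u}=\mathbf{0}$ really does imply the pointwise matching conditions needed for the weak Hessian to coincide with the strong one. A secondary subtlety is that the lemma as stated asks for \emph{positive integers} $\alpha,\beta$, but the argument only uses $\alpha,\beta>0$; I would simply note this and proceed.
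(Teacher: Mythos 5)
Your proposal follows essentially the same route as the paper's proof: eliminate $\mathbf{y}=B\mathbf{u}$ (equivalently, reduce to the bottom-right $2\times 2$ block $D$), use the inner product of the middle equation with $\mathbf{u}$ together with $A\mathbf{u}=\mathbf{0}$ to obtain $B\mathbf{u}=\mathbf{0}$, conclude $\mathbf{u}=\mathbf{0}$ via the uniqueness mechanism of the discrete problem (the paper invokes Theorem~\ref{theo:4.1} abstractly, you re-run its internals, but the mathematics is the same), and finally use full row rank of $A$ to get $\mathbf{x}=\mathbf{0}$. Your remark that ``positive integers $\alpha,\beta$'' should read ``positive constants'' is also apt — the paper itself only uses $\alpha,\beta>0$.
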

 \begin{proof}  Let 
\begin{equation}
D: =\left( \begin{array}{cc}
    \alpha B^TB &  \beta A^T \\
    \beta A & {\bf 0} \\
 \end{array}
\right ).
\end{equation}
It suffices to show that $D$ is invertible. To this end,  we prove that the equation 
\begin{equation}\label{new:3}
    D{\bf c}={\bf 0}
\end{equation}
where 
\[
{\bf c}:=({\bf u},{\bf x})^T,\ \ {\bf u}:=(u_1,\ldots, u_N)^T,\ \ {\bf x}:=(x_1,\ldots, x_M)^T, 
\] 
has only trivial solution.   
It follows from \eqref{new:3} that 
\begin{equation}\label{new:2}
    \alpha B^TB{\bf u}+\beta A^T{\bf x}={\bf 0}, \ \ \beta Au=0. 
\end{equation}
Multiplying the both sides of the first equation of \eqref{new:2} by ${\bf u}^T$ yields 
\[
\alpha {\bf u}^TB^TB{\bf u}+\beta{\bf u}^T A^T{\bf x}=\alpha {\bf u}^TB^TB{\bf u}+\beta(A{\bf u})^T{\bf x}={\bf 0}. 
\] 
Using $A{\bf u}=0$ in the resulting equation, we get that
\[
{\bf u}^TB^TB{\bf u}={\bf 0}. 
\]
That is equivalent to $B{\bf u}={\bf 0}$, which will be used together with $A{\bf u}={\bf 0}$ to conclude that ${\bf u}={\bf 0}$. 


Given a vector ${\bf u}=(u_1,\ldots u_N)^T$, from \eqref{6.1}-\eqref{6.3} there exists a function $\bar u_h\in V_h^0$ with the coefficient vector ${\bf u}$. Since $B{\bf u}$={\bf 0}, we have that
$\varphi(B{\bf u})=0$, where $\varphi$ is defined by \eqref{varphi}. Then it follows from \eqref{ss1} that 
$s(\bar u_h)=0$. Moreover, using the definition \eqref{a} of matrix $A$ and the equation  $A{\bf u}=0$ leads to 
\[
({\cal L}_w \bar u_h, \sigma)=0,\ \ \mbox{for all}\ \ \sigma\in W_h. 
\]
This implies that 
\[
s(\bar u_h)=0\le  s(v_h),\ \ \mbox{for all}\ \ v_h\in S_h.
\]
In other words,  the function $\bar u_h$ is the minimizer of  \eqref{min:uh-new}.  As the solution of the optimization problem \eqref{min:uh-new} is unique and $s(\bar u_h)=0$, we have that
$\bar u_h=0$, and thus ${\bf u}:=(u_1,\ldots, u_N)^T={\bf 0}$. 
   
It remains to show that ${\bf x}={\bf 0}$. To this end, we substitute the  equation $B{\bf u}={\bf 0}$ into the first equation of \eqref{new:2}, and obtain that 
\[
A^T {\bf x}={\bf 0}. 
\]   
Since the matrix $A$ is of row full rank, the above equation has only the trivial solution. Thus, we have that ${\bf x}=0$. 
 
We have shown that equation \eqref{new:3} has only zero solution. Consequently,  $D$ is  invertible. 
\end{proof}

We rewrite the equations of \eqref{al:1} in the next proposition.

\begin{proposition}\label{Linear-system}
Let $S$ be the block matrix defined in \eqref{M1}.
For given vectors ${\bf u}^n\in {\mathbb R}^N$, ${\bf x}^n\in {\mathbb R}^M$, ${\bf y}\in {\mathbb R}^{(d+1)(k+1)N_e}$, ${\bf v}^{n}$ is defined as in \eqref{vv} and let
\begin{equation}\label{M}
{\bf b}^n :=
\left( \begin{array}{c}
      -B {\bf u}^n+({\cal I}-{\rm prox}_{\frac{1}{\alpha}\varphi})(B{\bf u}^{n}+ {\bf y}^n)\\
\alpha B^T{\rm prox}_{\frac{1}{\alpha}\varphi}(B{\bf u}^{n}+ {\bf y}^n)+\beta A^T{\bf x}^n  \\
 \beta  {\bf f} \\
\end{array}
\right ).
\end{equation}
Then \eqref{al:1} has the following equivalent form
\begin{equation}\label{M2}
 S{\bf v}^{n+1}={\bf b}^n.
\end{equation}
\end{proposition}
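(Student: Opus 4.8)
The plan is to verify directly that the three block-rows of the matrix equation $S{\bf v}^{n+1}={\bf b}^n$ reproduce, respectively, the three equations of the iterative scheme \eqref{al:1}, after the third equation has been put in the explicit form made available by \eqref{Au:1}. First I would unpack ${\bf v}^{n+1}=({\bf y}^{n+1},{\bf u}^{n+1},{\bf x}^{n+1})^T$ and multiply by the block matrix $S$ row by row, matching against the corresponding blocks of ${\bf b}^n$.

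The first block-row of $S{\bf v}^{n+1}$ reads $I{\bf y}^{n+1}-B{\bf u}^{n+1}+{\bf 0}\cdot{\bf x}^{n+1}$, so equating it with the first block of ${\bf b}^n$ gives ${\bf y}^{n+1}-B{\bf u}^{n+1}=-B{\bf u}^n+({\cal I}-{\rm prox}_{\frac1\alpha\varphi})(B{\bf u}^n+{\bf y}^n)$, which upon moving $B{\bf u}^{n+1}$ to the right is exactly the second equation of \eqref{al:1}. The third block-row of $S{\bf v}^{n+1}$ is $\beta A{\bf u}^{n+1}=\beta{\bf f}$, i.e. $A{\bf u}^{n+1}={\bf f}$; here I would invoke \eqref{Au:1}, namely ${\rm prox}_{\frac1\beta L}({\bf x})={\bf f}$ for every ${\bf x}$, to observe that the third equation of \eqref{al:1}, ${\bf x}^{n+1}=({\cal I}-{\rm prox}_{\frac1\beta L})(A{\bf u}^{n+1}+{\bf x}^{n+1})$, simplifies to $A{\bf u}^{n+1}={\bf f}$, so the two statements coincide. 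For the second block-row, $S{\bf v}^{n+1}$ contributes $\alpha B^TB{\bf u}^{n+1}+\beta A^T{\bf x}^{n+1}$; using the first equation of \eqref{al:1} in the form $\alpha B^T{\bf y}^{n+1}=-\beta A^T{\bf x}^{n+1}$, and substituting ${\bf y}^{n+1}=B{\bf u}^{n+1}-B{\bf u}^n+({\cal I}-{\rm prox}_{\frac1\alpha\varphi})(B{\bf u}^n+{\bf y}^n)$, I would expand $\alpha B^T{\bf y}^{n+1}$ and cancel the $\alpha B^TB{\bf u}^{n+1}$ term against the corresponding term produced by $S{\bf v}^{n+1}$; the surviving terms are $-\alpha B^T(B{\bf u}^n)+\alpha B^T({\cal I}-{\rm prox}_{\frac1\alpha\varphi})(B{\bf u}^n+{\bf y}^n)$, and since $({\cal I}-{\rm prox}_{\frac1\alpha\varphi})(B{\bf u}^n+{\bf y}^n)=B{\bf u}^n+{\bf y}^n-{\rm prox}_{\frac1\alpha\varphi}(B{\bf u}^n+{\bf y}^n)$, the $B{\bf u}^n$ pieces cancel and one is left with $\alpha B^T{\bf y}^n-\alpha B^T{\rm prox}_{\frac1\alpha\varphi}(B{\bf u}^n+{\bf y}^n)$, which must be rearranged to match $\alpha B^T{\rm prox}_{\frac1\alpha\varphi}(B{\bf u}^n+{\bf y}^n)+\beta A^T{\bf x}^n$ in ${\bf b}^n$; here I would additionally use the previous-step relation $\alpha B^T{\bf y}^n=-\beta A^T{\bf x}^n$ (the first equation of \eqref{al:1} at index $n$) to close the identity. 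The converse direction — that any solution of \eqref{M2} satisfies \eqref{al:1} — follows by reading the same computations backwards, using the nonsingularity of $S$ established in the preceding lemma to guarantee the iterates are well defined.

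I expect the only mild subtlety, rather than a genuine obstacle, to be the bookkeeping in the second block-row: one must be careful that the cancellation of the $\alpha B^TB{\bf u}^{n+1}$ terms is legitimate and that the leftover terms genuinely reduce to the stated right-hand side, which quietly relies on the first equation of \eqref{al:1} holding at the previous step $n$ (so that $\alpha B^T{\bf y}^n+\beta A^T{\bf x}^n=0$). Since the scheme is implicit and all three equations of \eqref{al:1} are coupled, I would state this as a straightforward equivalence of linear systems and keep the verification to a few lines, noting that the substitution of \eqref{Au:1} is what makes the third row purely algebraic and the elimination of ${\rm prox}_{\frac1\beta L}$ possible.
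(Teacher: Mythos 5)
Your proposal matches the paper's proof in spirit and essentially in execution: block-row verification, substitution of the second equation of \eqref{al:1} into the first, and elimination of ${\rm prox}_{\frac1\beta L}$ via \eqref{Au:1}. You are also right, and the paper glosses over this, that the middle block only closes because the first equation of \eqref{al:1} already holds at the previous index; direct substitution gives
\[
\beta A^T{\bf x}^{n+1}+\alpha B^TB{\bf u}^{n+1}=\alpha B^T{\rm prox}_{\frac1\alpha\varphi}(B{\bf u}^n+{\bf y}^n)-\alpha B^T{\bf y}^n,
\]
so matching the middle block of ${\bf b}^n$ requires $\alpha B^T{\bf y}^n+\beta A^T{\bf x}^n={\bf 0}$, which means either the initial guess should be chosen with $\alpha B^T{\bf y}^0+\beta A^T{\bf x}^0={\bf 0}$ or the stated equivalence should be read for $n\ge 1$. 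One bookkeeping correction: after using $\beta A^T{\bf x}^{n+1}=-\alpha B^T{\bf y}^{n+1}$, the second block of $S{\bf v}^{n+1}$ reduces to $\alpha B^TB{\bf u}^n-\alpha B^T({\cal I}-{\rm prox}_{\frac1\alpha\varphi})(B{\bf u}^n+{\bf y}^n)=-\alpha B^T{\bf y}^n+\alpha B^T{\rm prox}_{\frac1\alpha\varphi}(B{\bf u}^n+{\bf y}^n)$, the negative of the expression you wrote; with that sign fixed and the step-$n$ relation applied, the match with ${\bf b}^n$ is exact.
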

\begin{proof}
Substituting the second equation of \eqref{al:1} into the first equation, we get that
\[
\beta A^T{\bf x}^{n+1}+\alpha  B^TB{\bf u}^{n+1}= \alpha B^T{\rm prox}_{\frac{1}{\alpha}\varphi}(B{\bf u}^{n}+ {\bf y}^n)+\beta A^T{\bf x}^n.
\]
By \eqref{Au:1}, the third equation of \eqref{al:1} can be rewritten as
\[
A {\bf u}^{n+1}= {\bf f}.
\]
Consequently, \eqref{al:1} is equivalent to
\begin{eqnarray}\nonumber\label{al:2}
\left\{
   \begin{array}{ll}
     \beta A^T{\bf x}^{n+1}+\alpha  B^TB{\bf u}^{n+1}= \alpha B^T{\rm prox}_{\frac{1}{\alpha}\varphi}(B{\bf u}^{n}+ {\bf y}^n)+\beta A^T{\bf x}^n, &\\
    {\bf y}^{n+1}-B{\bf u}^{n+1}=-B{\bf u}^{n}+({\cal I}- {\rm prox}_{\frac{1}{\alpha}\varphi})(B{\bf u}^{n}+{\bf y}^n), &\\
   A {\bf u}^{n+1}= {\bf f}. &
   \end{array}\right.
\end{eqnarray}
Thus, \eqref{M2} follows.
\end{proof}

Proposition \ref{Linear-system} gives rise to our algorithm.

\begin{algorithm}\label{ALGORITH}
Choose ${\bf v}^0:=({\bf y}^0,{\bf u}^0, {\bf x}^0)$. For $n=0, 1,  \dots$, do:
\begin{itemize}
\item[1.] Evaluate the proximity operator  $prox_{\frac{1}{\alpha}\varphi}$ at $B{\bf u}^{n}+{\bf y}^n$ and compute ${\bf b}^n$ by \eqref{M};
\item[2.]  Solve the linear system 
$$
S{\bf v}^{n+1}={\bf b}^n
$$ 
for ${\bf v}^{n+1}$, with $S$ being defined by \eqref{M1}.
\end{itemize}
\end{algorithm}

The implementation of Algorithm \ref{ALGORITH} requires computing 
the proximity operator of $\varphi$, which will be presented in section 8.

\section{Convergence Analysis of the Iterative Algorithm}

This  section is devoted to the convergence analysis of the iterative Algorithm \eqref{al:1}.
  
We will need a property of the proximity operator, which review below.
An operator  ${\cal P}: {\mathbb R}^d\rightarrow {\mathbb R}^d$ is called {\it nonexpansive} if 
\[
\|{\cal P}({\bf x})-{\cal P}({\bf y})\|\le \|{\bf x}-{\bf y}\|,\ \ \mbox{for all}\ \ {\bf x}, {\bf y}\in {\mathbb R}^d.
\]
It is well-known (cf. \cite{Combettes-Wajs1}) that the proximity operator of a convex function $\psi$ satisfies the following inequality for all ${\bf x},{\bf y}\in {\mathbb R}^d$
\begin{equation}\label{prox:ineq}
\|{\rm prox}_{\psi}({\bf x})-{\rm prox}_{\psi}({\bf y})\|^2\le ({\bf x}-{\bf y}, {\rm prox}_{\psi}({\bf x})-{\rm prox}_{\psi}({\bf y})).
\end{equation}
In other words, the proximity operator is firmly nonexpansive and thus, it is nonexpansive.  As a result of \eqref{prox:ineq}, we conclude that
${\cal I}-\alpha {\rm prox}_{\psi}$ is also firmly nonexpansive for all $0\le \alpha \le 1$. Here ${\cal I}$ denotes the identity operator.

We will frequently use two technical identities in our convergence analysis of Algorithm \eqref{al:1}. For any vector ${\bf a}\in {\mathbb R}^d$, we define its $\ell_2$-norm by $\|{\bf a}\|:=({\bf a},{\bf a})$. A direct calculation confirms that
\begin{equation}\label{aa:1}
 2({\bf a}+{\bf b},{\bf a})= \|{\bf a}\|^2+\|{\bf a}+{\bf b}\|^2-\| {\bf b}\|^2,\ \ 2({\bf a}-{\bf b},{\bf a})= \|{\bf a}\|^2+\|{\bf a}- {\bf b}\|^2-\|{\bf b}\|^2.
\end{equation}

\begin{lemma} \label{Lemma7.1}
The iterative scheme \eqref{al:1} is stable in the sense that
\begin{equation}\label{a:2}
\|{\bf y}^{n+1}\|^2+\|B{\bf u}^{n+1}\|^2+\sum_{i=0}^n\big(\| B {\bf u}^{i}-B{\bf u}^{i+1}\|^2+\| {\bf y}^{i+1}-{\bf y}^{i}\|^2\big)\le  \|{\bf y}^{0}\|^2+\|B{\bf u}^{0}\|^2, 
\end{equation}
for all nonnegative integers $n$, where ${\bf u} ^{n}$ and ${\bf y} ^{n}$ are  sequences generated from \eqref{al:1}. Moreover, there holds 
\begin{equation}\label{a00:3}
\lim_{n\rightarrow +\infty} (\| B {\bf u}^{n}-B{\bf u}^{n+1}\|^2+\| {\bf y}^{n+1}-{\bf y}^{n}\|^2)=0. 
\end{equation}
\end{lemma}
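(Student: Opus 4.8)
The plan is to derive the stability estimate \eqref{a:2} by testing the iteration \eqref{al:1} against the increments $B{\bf u}^{n+1}-B{\bf u}^n$ and ${\bf y}^{n+1}-{\bf y}^n$, and then exploiting the firm nonexpansiveness of the operators ${\cal I}-{\rm prox}_{\frac1\alpha\varphi}$ and ${\cal I}-{\rm prox}_{\frac1\beta L}$ together with the algebraic identities \eqref{aa:1}. First I would take the inner product of the first equation of \eqref{al:1} with ${\bf u}^{n+1}-{\bf u}^n$ and use $A({\bf u}^{n+1}-{\bf u}^n)=0$ (which follows from ${\rm prox}_{\frac1\beta L}\equiv{\bf f}$, i.e. $A{\bf u}^{n+1}={\bf f}$ for all $n$, via \eqref{Au:1}) to kill the $A^T{\bf x}^{n+1}$ term; this yields $(B^T{\bf y}^{n+1},{\bf u}^{n+1}-{\bf u}^n)=0$, hence $({\bf y}^{n+1},B{\bf u}^{n+1}-B{\bf u}^n)=0$.

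Next I would rewrite the second equation of \eqref{al:1} as ${\bf y}^{n+1}-(B{\bf u}^{n+1}-B{\bf u}^n)=({\cal I}-{\rm prox}_{\frac1\alpha\varphi})(B{\bf u}^n+{\bf y}^n)$, and combine it with the observation that ${\bf y}^n$ itself equals $({\cal I}-{\rm prox}_{\frac1\alpha\varphi})(B{\bf u}^{n-1}+{\bf y}^{n-1})$ (with the analogous statement holding at step $0$ as an initialization, or by treating the first step separately). Applying the firm nonexpansiveness inequality \eqref{prox:ineq} to the operator ${\cal I}-{\rm prox}_{\frac1\alpha\varphi}$ at the arguments $B{\bf u}^n+{\bf y}^n$ and $B{\bf u}^{n-1}+{\bf y}^{n-1}$, the left side becomes $\|{\bf y}^{n+1}-B{\bf u}^{n+1}+B{\bf u}^n - {\bf y}^n\|^2$ and the right side is an inner product of that same quantity with $(B{\bf u}^n+{\bf y}^n)-(B{\bf u}^{n-1}+{\bf y}^{n-1})$. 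Expanding both sides with \eqref{aa:1}, using the orthogonality $({\bf y}^{j+1},B{\bf u}^{j+1}-B{\bf u}^j)=0$ established above to cancel cross terms, I expect to arrive at a one-step inequality of the form
\[
\|{\bf y}^{n+1}\|^2+\|B{\bf u}^{n+1}\|^2 + \|B{\bf u}^{n+1}-B{\bf u}^n\|^2+\|{\bf y}^{n+1}-{\bf y}^n\|^2 \le \|{\bf y}^{n}\|^2+\|B{\bf u}^{n}\|^2,
\]
and then \eqref{a:2} follows by summing this telescoping inequality from $0$ to $n$. The limit statement \eqref{a00:3} is then immediate: the partial sums $\sum_{i=0}^n(\|B{\bf u}^i-B{\bf u}^{i+1}\|^2+\|{\bf y}^{i+1}-{\bf y}^i\|^2)$ are bounded above by $\|{\bf y}^0\|^2+\|B{\bf u}^0\|^2$ and nondecreasing, hence convergent, so the general term tends to zero.

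The main obstacle I anticipate is bookkeeping the cross terms correctly when expanding the firm-nonexpansiveness inequality via \eqref{aa:1}: one must carefully group the quantity ${\bf y}^{n+1}-(B{\bf u}^{n+1}-B{\bf u}^n)$ versus its counterpart at the previous step, verify that the orthogonality relation $({\bf y}^{j+1},B{\bf u}^{j+1}-B{\bf u}^j)=0$ eliminates exactly the terms that would otherwise spoil the telescoping, and handle the base case $n=0$ cleanly (where ${\bf y}^0$ and $B{\bf u}^0$ are free initial data rather than outputs of a proximity step). A secondary point requiring care is confirming that $A{\bf u}^{n+1}={\bf f}$ holds for \emph{every} $n\ge 0$ — it does, because the third line of \eqref{al:1} together with \eqref{Au:1} forces it regardless of the initial guess ${\bf u}^0$ — which is what makes the term $\beta A^T{\bf x}^{n+1}$ disappear when tested against ${\bf u}^{n+1}-{\bf u}^n$.
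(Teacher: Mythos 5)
Your proposal contains two genuine errors that prevent it from producing the one-step dissipation inequality you are aiming for.

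First, the identity you state as an ``observation'' --- that ${\bf y}^n = ({\cal I}-{\rm prox}_{\frac1\alpha\varphi})(B{\bf u}^{n-1}+{\bf y}^{n-1})$ --- is false. The second line of \eqref{al:1} at step $n-1$ reads ${\bf y}^{n} - (B{\bf u}^{n}-B{\bf u}^{n-1}) = ({\cal I}-{\rm prox}_{\frac1\alpha\varphi})(B{\bf u}^{n-1}+{\bf y}^{n-1})$, so ${\bf y}^n$ differs from the prox term by the increment $B{\bf u}^n - B{\bf u}^{n-1}$. Second, and more fundamentally, comparing the firmly nonexpansive operator at two \emph{consecutive} arguments $B{\bf u}^n+{\bf y}^n$ and $B{\bf u}^{n-1}+{\bf y}^{n-1}$ produces an inequality tying together indices $n-1$, $n$, and $n+1$ --- a second-difference estimate --- which does not telescope to \eqref{a:2}. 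What the paper actually does is apply \eqref{prox:ineq} to $P:={\cal I}-{\rm prox}_{\frac1\alpha\varphi}$ at the single pair $(B{\bf u}^n+{\bf y}^n,\; {\bf 0})$, using that $P({\bf 0})={\bf 0}$ (because $\varphi\ge 0$ with $\varphi({\bf 0})=0$). With $P(B{\bf u}^n+{\bf y}^n)={\bf y}^{n+1}+B{\bf u}^n-B{\bf u}^{n+1}$, this yields a single inequality in the two indices $n,n+1$, which after expansion and \eqref{aa:1} gives exactly the one-step energy decrease.

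There is also a third ingredient you omit entirely. After using orthogonality to cancel the mixed terms, what remains of the firm-nonexpansiveness inequality is
\begin{equation*}
({\bf y}^{n+1},{\bf y}^{n+1}-{\bf y}^{n}) + (B{\bf u}^{n+1}-B{\bf u}^{n},B{\bf u}^{n+1}) - ({\bf y}^{n+1},B{\bf u}^{n+1}) \le 0,
\end{equation*}
and one must still show that $-({\bf y}^{n+1},B{\bf u}^{n+1})\ge 0$ before the first two (telescoping) terms can be isolated. This sign is obtained by converting $({\bf y}^{n+1},B{\bf u}^{n+1})$ into $-\tfrac{\beta}{\alpha}({\bf x}^{n+1},A{\bf u}^{n+1})$ via the first equation of \eqref{al:1}, and then applying firm nonexpansiveness of ${\cal I}-{\rm prox}_{\frac1\beta L}$ at the pair $(A{\bf u}^{n+1}+{\bf x}^{n+1},\;{\bf 0})$ to get $({\bf x}^{n+1},A{\bf u}^{n+1})\ge 0$. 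Your proposal derives only the orthogonality $({\bf y}^{n+1},B{\bf u}^{n+1}-B{\bf u}^{n})=0$ (which is fine and is indeed part of the argument, together with the companion relation $({\bf y}^{n}-{\bf y}^{n+1},B{\bf u}^{n+1}-B{\bf u}^{n})=0$), but without the sign condition $({\bf x}^{n+1},A{\bf u}^{n+1})\ge 0$ the one-step decrease does not follow. Once these two corrections are made --- compare against the zero vector rather than the previous iterate, and establish the nonnegativity of the residual term via the second proximity operator --- the telescoping and the limit argument you sketch in the final sentence go through as stated.
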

\begin{proof} 
Since ${\cal I}- {\rm prox}_{\frac{1}{\alpha} \varphi}$ is  nonexpansive,  we have from \eqref{prox:ineq} and the second equation of \eqref{al:1} that for $i=0,1,\dots$,
\begin{eqnarray*}
({\bf y}^{i+1}+B{\bf u}^{i}-B {\bf u}^{i+1},  {\bf y}^{i+1}+B{\bf u}^{i}-B {\bf u}^{i+1})\le ( {\bf y}^{i+1}+B{\bf u}^{i}-B {\bf u}^{i+1},  {\bf y}^{i}+B{\bf u}^{i}).
\end{eqnarray*}
This is equivalent to 
$$
({\bf y}^{i+1}+B{\bf u}^{i}-B {\bf u}^{i+1}, {\bf y}^{i+1}-B {\bf u}^{i+1}-{\bf y}^{i})\leq 0, \ \ \mbox{for}\ \ i=0,1,\dots, n.
$$
For $i=0,1,\dots,n$, letting 
\[
I_i:=(B{\bf u}^{i}-B {\bf u}^{i+1},{\bf y}^{i+1}-{\bf y}^{i})-({\bf y}^{i+1}, B {\bf u}^{i+1}),
\]
from the inequality above we have that
\begin{eqnarray}\label{eq:18}
({\bf y}^{i+1}, {\bf y}^{i+1}-{\bf y}^{i})+(B{\bf u}^{i+1}-B {\bf u}^{i}, B{\bf u}^{i+1})+I_i\leq 0,  \ \ \mbox{for}\ \ i=0,1,\dots, n.
\end{eqnarray}

We next show that $I_i\geq 0$ for $i=0,1,\dots,n$. From \eqref{Au:1} and the third equation  of \eqref{al:1} we obtain that
\begin{equation}\label{Au:2}
A{\bf u}^{i+1}={\bf f}, \ \ \mbox{for}\ \ i=0,1,\dots, n.
\end{equation}
This ensures that 
$$
A{\bf u}^{i+1}-A{\bf u}^{i}=0, \ \ \mbox{for}\ \ i=0,1,\dots, n.
$$
Consequently,
\begin{eqnarray*}
(A{\bf u}^{i+1}-A{\bf u}^{i}, \beta ({\bf x}^{i+1}-{\bf x}^i))=0,
\end{eqnarray*}
which implies that
\begin{eqnarray*}
({\bf u}^{i+1}-{\bf u}^{i}, \beta (A^T{\bf x}^{i+1}-A^T{\bf x}^{i}))=0.
\end{eqnarray*}
By using the first equation of \eqref{al:1}, we obtain that 
\begin{eqnarray*}
-({\bf u}^{i+1}-{\bf u}^{i}, \alpha (B^T{\bf y}^{i+1}-B^T{\bf y}^{i}))=0.
\end{eqnarray*}
This yields that
\begin{eqnarray*}
(B{\bf u}^{i+1}-B{\bf u}^{i}, {\bf y}^{i+1}-{\bf y}^{i})=0.
\end{eqnarray*}
Substituting the above equation into the expression of $I_i$ and using the first equation of \eqref{al:1} in the resulting expression, we find that
\begin{eqnarray*}
I_i =-({\bf y}^{i+1}, B {\bf u}^{i+1})=-(B^T{\bf y}^{i+1},  {\bf u}^{i+1})=\frac{\beta}{\alpha}( {\bf x}^{i+1}, A {\bf u}^{i+1}).
\end{eqnarray*}
On the other hand, noticing that the proximity operator ${\cal I}- \mathit {prox}_{\frac{1}{\beta} L}$ is nonexpansive,  we have from \eqref{prox:ineq} and the third equation of \eqref{al:1} that
\begin{eqnarray*}
({\bf x}^{i+1},  {\bf x}^{i+1})\le ({\bf x}^{i+1}, A{\bf u}^{i+1}+{\bf x}^{i+1}), \ \ \mbox{for}\ \ i=0,1,\dots, n.
\end{eqnarray*}
This yields
\begin{equation}\label{a:3}
   ({\bf x}^{i+1}, A{\bf u}^{i+1})\ge 0, \ \ \mbox{for}\ \ i=0,1,\dots, n.
\end{equation}
This yields for $i=0,1,\dots,n$ that $I_i\ge 0$.

In light of \eqref{eq:18} and the non-negativity of $I_i$, we observe that
\begin{eqnarray}\label{eq:18-0}
({\bf y}^{i+1}, {\bf y}^{i+1}-{\bf y}^{i})+(B{\bf u}^{i+1}-B {\bf u}^{i}, B{\bf u}^{i+1})\leq 0,  \ \ \mbox{for}\ \ i=0,1,\dots, n.
\end{eqnarray}
Combining inequality \eqref{eq:18-0} and \eqref{aa:1},  we have hat for $i=0,1,\dots,n,$
\begin{eqnarray*}
\|{\bf y}^{i+1}\|^2+\| {\bf y}^{i+1}-{\bf y}^{i}\|^2-\|{\bf y}^{i}\|^2+\|B{\bf u}^{i+1}\|^2+  \| B {\bf u}^{i}-B{\bf u}^{i+1}\|^2-\|B{\bf u}^{i}\|^2\leq 0.
\end{eqnarray*}
Summing up the above inequalities for $i=0,1, \dots,n$ yields the desired estimate \eqref{a:2}.

To prove \eqref{a00:3},  we define
\[
{\bf a}^{n+1}: = \sum_{i=0}^n\big(\| B {\bf u}^{i}-B{\bf u}^{i+1}\|^2+\| {\bf y}^{i+1}-{\bf y}^{i}\|^2\big).
\]
Clearly, the estimate \eqref{a:2} indicates that 
the sequence $\{ {\bf a}^{n}\}$ is monotonically increasing and has a  upper bound.  Therefore, $\{ {\bf a}^{n}\}$ converges and thus
\[
\lim_{n\rightarrow +\infty} ({\bf a}^{n+1}- {\bf a}^{n})= \lim_{n\rightarrow +\infty} (\| B {\bf u}^{n}-B{\bf u}^{n+1}\|^2+\| {\bf y}^{n+1}-{\bf y}^{n}\|^2)=0.
\]
Then the desired result \eqref{a00:3}  follows. 
 \end{proof}

We are ready to present the convergence result of the iterative algorithm \eqref{al:1}.

\begin{theorem}
The sequence $\{{\bf u}^n\}$ generated by iterative scheme \eqref{al:1} converges to the minimizer of \eqref{2.5}.
\end{theorem}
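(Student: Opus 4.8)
The plan is to combine the stability and asymptotic-regularity estimates of Lemma~\ref{Lemma7.1}, the fixed-point characterization of Proposition~\ref{lemma:10}, and a Fej\'er-monotonicity argument centered at an accumulation point of the iterates. The first task is to upgrade the partial boundedness in \eqref{a:2} --- which controls only $\{{\bf y}^n\}$ and $\{B{\bf u}^n\}$ --- to boundedness of the full state vector ${\bf v}^n=({\bf y}^n,{\bf u}^n,{\bf x}^n)$. For $n\ge1$ the third equation of \eqref{al:1} gives $A{\bf u}^n={\bf f}$ (cf. \eqref{Au:2}) and the first gives $\beta A^T{\bf x}^n=-\alpha B^T{\bf y}^n$; since $A$ has full row rank, $A^T$ is injective, so $\|{\bf x}^n\|\le C\|B^T{\bf y}^n\|$ is bounded, and since the stacked map ${\bf u}\mapsto(A{\bf u},B{\bf u})$ is injective --- which is exactly what is shown inside the proof that $S$ is nonsingular --- one gets $\|{\bf u}^n\|\le C(\|{\bf f}\|+\|B{\bf u}^n\|)$, also bounded. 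Hence $\{{\bf v}^n\}$ is bounded and admits a subsequence ${\bf v}^{n_k}\to{\bf v}^*=({\bf y}^*,{\bf u}^*,{\bf x}^*)$.

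Next I would show that ${\bf v}^*$ solves the fixed-point system \eqref{2.90}. By \eqref{a00:3} we have $\|B{\bf u}^{n_k+1}-B{\bf u}^{n_k}\|\to0$ and $\|{\bf y}^{n_k+1}-{\bf y}^{n_k}\|\to0$, so $B{\bf u}^{n_k+1}\to B{\bf u}^*$ and ${\bf y}^{n_k+1}\to{\bf y}^*$; combining $A{\bf u}^{n_k+1}={\bf f}$ with the injectivity of ${\bf u}\mapsto(A{\bf u},B{\bf u})$ yields ${\bf u}^{n_k+1}\to{\bf u}^*$, and then $\beta A^T{\bf x}^{n_k+1}=-\alpha B^T{\bf y}^{n_k+1}$ together with injectivity of $A^T$ yields ${\bf x}^{n_k+1}\to{\bf x}^*$; thus ${\bf v}^{n_k+1}\to{\bf v}^*$ as well. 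Passing to the limit along $\{n_k\}$ in the three equations of \eqref{al:1} --- using that the proximity operator is (firmly) nonexpansive, hence continuous, and that the $B{\bf u}^{n+1}-B{\bf u}^n$ term disappears in the limit --- gives $\beta A^T{\bf x}^*+\alpha B^T{\bf y}^*=0$, ${\bf y}^*=({\cal I}-{\rm prox}_{\frac1\alpha\varphi})(B{\bf u}^*+{\bf y}^*)$, and ${\bf x}^*=({\cal I}-{\rm prox}_{\frac1\beta L})(A{\bf u}^*+{\bf x}^*)$; i.e. ${\bf v}^*$ satisfies \eqref{2.90}. By the converse part of Proposition~\ref{lemma:10}, ${\bf u}^*$ is the minimizer of \eqref{2.5}. (Note that from the third equation of \eqref{2.90} and \eqref{Au:1} one also gets $A{\bf u}^*={\bf f}$, which will be used below.)

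The decisive step is to promote this subsequential convergence to convergence of the whole sequence. I would rerun the computation of Lemma~\ref{Lemma7.1} "centered at ${\bf v}^*$": set $\tilde{\bf y}^i:={\bf y}^i-{\bf y}^*$ and $\tilde{\bf z}^i:=B({\bf u}^i-{\bf u}^*)$, and apply the firm-nonexpansiveness inequality \eqref{prox:ineq} for $T:={\cal I}-{\rm prox}_{\frac1\alpha\varphi}$ to the two points $B{\bf u}^i+{\bf y}^i$ and $B{\bf u}^*+{\bf y}^*$, noting $T(B{\bf u}^i+{\bf y}^i)={\bf y}^{i+1}-B{\bf u}^{i+1}+B{\bf u}^i$ and $T(B{\bf u}^*+{\bf y}^*)={\bf y}^*$. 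After expanding with \eqref{aa:1} this produces
\begin{equation*}
(\tilde{\bf y}^{i+1},\tilde{\bf y}^{i+1}-\tilde{\bf y}^i)+(\tilde{\bf z}^{i+1}-\tilde{\bf z}^i,\tilde{\bf z}^{i+1})+\tilde I_i\le0,
\end{equation*}
with cross term $\tilde I_i=(B{\bf u}^i-B{\bf u}^{i+1},{\bf y}^{i+1}-{\bf y}^i)-({\bf y}^{i+1}-{\bf y}^*,B({\bf u}^{i+1}-{\bf u}^*))$. The first piece of $\tilde I_i$ vanishes by the orthogonality $(B{\bf u}^{i+1}-B{\bf u}^i,{\bf y}^{i+1}-{\bf y}^i)=0$ already derived in the proof of Lemma~\ref{Lemma7.1}; the second piece equals $-\tfrac1\alpha(\alpha B^T({\bf y}^{i+1}-{\bf y}^*),{\bf u}^{i+1}-{\bf u}^*)=\tfrac\beta\alpha({\bf x}^{i+1}-{\bf x}^*,A({\bf u}^{i+1}-{\bf u}^*))$ by the first equation of \eqref{al:1} at step $i+1$ and at ${\bf v}^*$, and this vanishes since $A{\bf u}^{i+1}={\bf f}=A{\bf u}^*$. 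Hence $\tilde I_i=0$, so $\|{\bf y}^n-{\bf y}^*\|^2+\|B({\bf u}^n-{\bf u}^*)\|^2$ is nonincreasing in $n$ and therefore convergent; since it tends to $0$ along $\{n_k\}$, it tends to $0$. Finally, for $n\ge1$ we have $A({\bf u}^n-{\bf u}^*)=0$ and $B({\bf u}^n-{\bf u}^*)\to0$, so injectivity of ${\bf u}\mapsto(A{\bf u},B{\bf u})$ gives $\|{\bf u}^n-{\bf u}^*\|\le C\|B({\bf u}^n-{\bf u}^*)\|\to0$, which is the assertion.

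The main obstacle is this last Fej\'er step. The iteration does not act by a single (firmly) nonexpansive self-map of the state space --- the three blocks ${\bf y},{\bf u},{\bf x}$ are coupled through the linear solve $S{\bf v}^{n+1}={\bf b}^n$ --- so one cannot simply invoke a generic Krasnoselskii--Mann convergence theorem; the monotone Lyapunov quantity must be identified by hand and shown to be compatible with the specific structure, in particular with the facts that the constraint residual $A{\bf u}^n-{\bf f}$ vanishes for $n\ge1$ and that the increments of $B{\bf u}$ and of ${\bf y}$ are orthogonal. A secondary technical point, needed before any limit can be extracted, is that \eqref{a:2} bounds only $\{{\bf y}^n\}$ and $\{B{\bf u}^n\}$, so the full row rank of $A$ and the injectivity of ${\bf u}\mapsto(A{\bf u},B{\bf u})$ have to be invoked to obtain compactness of $\{{\bf v}^n\}$ and, at the very end, to recover convergence of ${\bf u}^n$ itself from convergence of $B{\bf u}^n$.
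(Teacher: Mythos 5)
Your proposal follows the paper's argument step-for-step: bound the iterates using Lemma~\ref{Lemma7.1} and the linear constraints, extract a convergent subsequence, verify its limit solves the fixed-point system \eqref{2.90}, and establish Fej\'er monotonicity of $\|{\bf y}^n-{\bf y}^*\|^2+\|B({\bf u}^n-{\bf u}^*)\|^2$ centered at the accumulation point to upgrade subsequential to full convergence. The only stylistic differences are that you invoke injectivity of ${\bf u}\mapsto(A{\bf u},B{\bf u})$ and of $A^T$ directly rather than through nonsingularity of $S$ (which encapsulates the same facts), and you derive the monotonicity from a single firm-nonexpansiveness inequality for ${\cal I}-{\rm prox}_{\frac{1}{\alpha}\varphi}$ with the cross term $\tilde I_i=0$, whereas the paper combines two such inequalities (for ${\cal I}-{\rm prox}$ and for ${\rm prox}$) together with the orthogonality relations \eqref{BBBB}--\eqref{BBBB2}.
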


\begin{proof}
We show that the sequence 
$({\bf y}^{n},{\bf u}^{n}, {\bf x} ^{n})$ generated by \eqref{al:1} converges, and then prove that the limit of the sequence $\{{\bf u}^{n}\}_{n=1}^{\infty}$ is actually the minimizer of \eqref{2.5}. 

Let $S, {\bf v}^n$ be defined in \eqref{M1} and \eqref{vv}, respectively. By using the equation $A^T{\bf x} ^{n}=-\frac{\alpha}{\beta}B^T{\bf y} ^{n}$ and the boundedness of the sequence $\{B{\bf u}^{n}\}$,  $\{{\bf y} ^{n}\}$ in \eqref{a:2},  we conclude that $\{S{\bf v}^{n}\}$ is bounded and thus there exists a subsequence $\{S{\bf v}^{n_k}\}$ 
 of $\{S{\bf v}^{n}\}$  that converges to a point ${\bf a}^*$.  Since $S$ is non-singular, there exists a subsequence $\{{\bf v}^{n_k}\}$ of 
 $\{{\bf v}^{n}\}$ that converges to ${\bf v}^{*}=S^{-1}{\bf a}^*:=({\bf y}^*,{\bf u}^*, {\bf x}^*)^T.$
 Then it follows from  \eqref{al:1}  and \eqref{a00:3} that the triple $({\bf y}^*,{\bf u}^*, {\bf x}^*)^T$ satisfies the equation \eqref{2.90}, which yields, together with  the equation \eqref{Au:1}, that 
 \begin{eqnarray}\label{2.91}
\left\{
\begin{array}{ll}
    \beta A^T{\bf x}^*+ \alpha B^T{\bf y}^*=0, &\\
    {\bf y}^*=({\cal I}-{\rm prox}_{\frac{1}{\alpha}\varphi})(B{\bf u}^*+{\bf y}^*), &\\
    A{\bf u}^*={\bf f}. &
\end{array}\right.
\end{eqnarray}

We next prove that  $({\bf y} ^{n},{\bf u}^{n}, {\bf x} ^{n})$ converges to $( {\bf y} ^{*},{\bf u}^{*}, {\bf x} ^{*})$ as $n\to \infty$.
To this end, we define 
${\bf \bar y}^{n}:={\bf y}^{n}-{\bf y}^{*}$, ${\bf \bar u}^{n}:={\bf u}^{n}-{\bf u}^{*}$ and ${\bf \bar x}^{n}:={\bf x}^{n}-{\bf x}^{*}$,
and show that the sequence $\|{\bf \bar y}^{n}\|^2+
\|B{\bf \bar u}^{n}\|^2$ is nonincreasing.
It follows from the second equation of \eqref{al:1} and that of \eqref{2.91} that 
$$
{\bf y}^{n+1}+B{\bf u}^{n}-B{\bf u}^{n+1}-{\bf y}^*
    ={\cal P}(B{\bf u}^n+{\bf y}^n)-{\cal P}(B{\bf u}^*+{\bf y}^*).
$$
where ${\cal P}:={\cal I}-{\rm prox}_{\frac{1}{\alpha}\varphi}$. Noticing that ${\cal P}$ is firmly nonexpansive, we obtain from the last equation and the definition of firm nonexpansiveness that
\begin{equation}\label{new:1}
     \|{\bf \bar y}^{n+1}+B{\bf u}^{n}-B {\bf u}^{n+1}\|^2\le ({\bf \bar y}^{n+1}+B{\bf u}^{n}-B {\bf u}^{n+1},{\bf \bar y}^{n}+B{\bf \bar u}^{n}).
\end{equation}
In light of the first  equation of \eqref{al:1} and that of \eqref{2.91}, we have for all  $i,j>0$ that
\[
(B{\bf \bar u}^i,{\bf \bar y}^{j})=({\bf u}^{i}- {\bf u}^{*},B^T{\bf \bar y}^{j})=-\frac{\beta}{\alpha}({\bf u}^{i}- {\bf u}^{*},A^T{\bf \bar x}^{j})
=-\frac{\beta}{\alpha}(A{\bf u}^{i}-A {\bf u}^{*},{\bf \bar x}^{j}).
\]   
By using equation \eqref{Au:2} and the third equation of \eqref{2.91}, we have that 
$$
A{\bf u}^{i}-A {\bf u}^{*}=f-f=0.
$$
Combining the equations above yields
\begin{equation}\label{BBBB}
    (B{\bf \bar u}^i,{\bf \bar y}^{j})=0.
\end{equation}
Following the same argument, we obtain that
\begin{equation}\label{BBBB2}
    (B{\bf u}^{i+1}-B{\bf u}^{i},{\bf \bar y}^{j})=0. 
\end{equation}  
Expanding the left side of inequality \eqref{new:1} and using equations \eqref{BBBB} and \eqref{BBBB2}, we get that
\begin{equation}\label{ex:00}
    \|{\bf \bar y}^{n+1}\|^2+\|B{\bf u}^{n}-B {\bf u}^{n+1}\|^2\le ({\bf \bar y}^{n+1},  {\bf \bar y}^{n})+(B{\bf u}^{n}-B {\bf u}^{n+1},  B{\bf \bar u}^{n}). 
\end{equation}
Likewise, by the second equation of \eqref{al:1} and that of \eqref{2.91}, we have that 
\[
B{\bf u}^{n+1}+{\bf y}^n-{\bf y}^{n+1}-B{\bf u}^{*}
    ={\cal P}_0(B{\bf u}^n+{\bf y}^n)-{\cal P}_0(B{\bf u}^*+{\bf y}^*).
\]
where ${\cal P}_0:={\rm prox}_{\frac{1}{\alpha}\varphi}$. 
Using the fact that ${\cal P}_0$ is firmly nonexpansive yields that 
\[
\|B{\bf \bar u}^{n+1}+{\bf y}^n-{\bf y}^{n+1}\|^2\le (B{\bf\bar u}^{n+1}+{\bf y}^n-{\bf y}^{n+1},{\bf \bar y}^{n}+B{\bf\bar  u}^{n}). 
\]
We expand the left and right sides of the above inequality  and use equations \eqref{BBBB} and \eqref{BBBB2} to obtain 
\begin{equation}\label{ex:01}
\|B{\bf \bar u}^{n+1}\|^2+
\|{\bf y}^{n+1}-{\bf y}^{n}\|^2\le ({\bf y}^n-{\bf y}^{n+1},{\bf\bar  y}^{n})+(B{\bf \bar u}^{n+1},B{\bf \bar u}^{n}).
\end{equation}
Combining inequalities \eqref{ex:00} and \eqref{ex:01} leads to 
\[
   \|{\bf \bar y}^{n+1}\|^2+\|B{\bf u}^{n}-B {\bf u}^{n+1}\|^2+ \|B{\bf \bar u}^{n+1}\|^2+
    \|{\bf y}^{n+1}-{\bf y}^{n}\|^2\le \|{\bf \bar y}^{n}\|^2+\|B{\bf \bar u}^{n}\|^2,
\]
which ensures that 
\[
\|{\bf \bar y}^{n+1}\|^2+\|B{\bf\bar u}^{n+1}\|^2\le \|{\bf \bar y}^{n}\|^2+\|B{\bf \bar u}^{n}\|^2. 
\] 
That is, the sequence $\|{\bf \bar y}^{n}\|^2+\|B{\bf \bar u}^{n}\|^2$ is nonincreasing. Moreover, it has a lower bound $0$. Consequently,  there exists a nonnegative number $c$ such that 
\[
\lim_{n\rightarrow +\infty} \|{\bf\bar y}^{n}\|^2+\|B{\bf\bar u}^{n}\|^2=c. 
\]  
Since we already have
 \[
    \lim_{k\rightarrow +\infty} \|{\bf\bar y}^{n_k}\|^2+\|B{\bf\bar u}^{n_k}\|^2=0, 
\]   
we must have $c=0$. Therefore,  the sequences $\{B {\bf u}^{n}\}_{n=1}^{\infty}$ and $\{ {\bf y}^{n}\}_{n=1}^{\infty}$  converge to 
$B {\bf u}^{*}$ and ${\bf y}^*$, respectively. 
  
Furthermore,   as a direct consequence of the first equation of \eqref{al:1} and that of \eqref{2.91},  we have 
\[
    \lim_{n\rightarrow +\infty}   A^T{\bf x}^{n+1}=-\frac{\alpha}{\beta}\lim_{n\rightarrow +\infty}  B^T{\bf y}^{n}=-\frac{\alpha}{\beta}B^T{\bf y}^{*}=A^T{\bf x}^{*}.
\]
Then  the sequence $\{A^T {\bf x}^{n}\}_{n=1}^{\infty}$ converges to $A{\bf x}^*$. Consequently, 
\[
 \lim_{n\rightarrow +\infty} S{\bf v}^{n}=S{\bf v}^{*}. 
\]
Since $S$ is non-singular, the above equation indicates ${\bf v}^{n}-{\bf v}^{*}\rightarrow 0$ as $n\to \infty$.  
Thus,
\[
    \lim_{n\rightarrow +\infty}({\bf y}^{n},{\bf u}^{n},{\bf x}^{n})=({\bf y}^{*},{\bf u}^{*},{\bf x}^{*}).
\]

Finally, as the limits $( {\bf y}^*,{\bf u}^*, {\bf x}^*)$ of $({\bf y} ^{n},{\bf u}^{n}\,{\bf x} ^{n})$ are solutions of \eqref{2.90}, Proposition \ref{lemma:10} ensures that the sequence $\{{\bf u^n}\}_{n=1}^{\infty}$ generated by the iterative scheme \eqref{al:1} converges to the  exact solution of  \eqref{2.5}. This completes the proof.
\end{proof}

\begin{remark}
  As we may observe, the iterative scheme \eqref{al:1} is implicit and the algorithm is convergent for any positive
  constant $\alpha,\beta$.   This implicit algorithm is different from the explicit schemes
  introduced in \cite{li-shen-xu-zhang}, where
  the parameters  $\alpha,\beta$  should be carefully chosen to ensure
  convergence of the algorithm.
  \end{remark}

\section{Computing the Proximity Operator of ${\varphi}$}

In this section, we discuss the calculation of the proximity operator of ${\varphi}$ defined in \eqref{varphi}.

We first consider the case $k=0$ in \eqref{varphi},  i.e.,  the piecewise constant approximation. Note that in this case, ${\varphi}({\bf q})$ is actually the $l_1$ norm of the vector ${\bf q}$, and thus the proximity of ${\varphi}$ can be calculated component-wise as (see, e.g., \cite{Micchelli-Shen-Xu})
\begin{equation}\label{app:1}
   {\rm prox}_{\alpha^{-1} \varphi}({\bf q})=({\rm prox}_{\alpha^{-1} \varphi}( q_1),\ldots, {\rm prox}_{\alpha^{-1} \varphi}( q_{M})),\ \ \mbox{for all}\ \ {\bf q}:=(q_1,\ldots, q_M)^T
\end{equation}
with
\begin{equation}\label{app:2}
    {\rm prox}_{\alpha^{-1} \varphi}( q_i)=\max\{|q_i|-\alpha^{-1},0\}{\rm sign}(q_i).
\end{equation}

We next consider the case $k=1$ in \eqref{varphi},  i.e.,  the piecewise linear approximation. To this end, we first discuss the relationship between the proximity operator of  $\varphi$ and the subdifferential of $\varphi$ at the zero vector. Recalling the definition of $\varphi$ in  \eqref{varphi}, we conclude that the convex function $\varphi$ is positive homogeneous, i.e., for any positive constant $\alpha$, there holds
\[
\varphi(\alpha {\bf q}) = \alpha \varphi( {\bf q}),\ \  \mbox{for all}\ \ {\bf q}.
\]
By $\partial\varphi(\bf 0)$ we denote the subdifferential of $\varphi$  at the zero vector, and by $L_{\partial\varphi(\bf 0)}$ we denote the indicator function of $\partial\varphi(\bf 0)$, Thus, we have that
\begin{eqnarray}\label{LL}
   L_{\partial\varphi(\bf 0)}({\bf v}) &=&\left\{
   \begin{array}{ll}
   0, & \ {\rm if}\ {{\bf v}}\in \partial\varphi(\bf 0), \vspace{1mm}  \\
   +\infty, & \ {\rm otherwise}.
   \end{array}\right.
\end{eqnarray}
For a homogeneous function $\varphi$, we have that
\begin{equation}\label{prox:00}
      ( {\cal I}-{\rm prox}_{ \varphi})({\bf v})={\rm prox}_{L_\partial\varphi(\bf 0)}({\bf v}),\ \ \mbox{for all}\ \ {\bf v}\in \bR^{2(d+1)N_e},
\end{equation}
see, e.g., \cite{Bauschke-Combettes}, Theorem 14.3. Therefore, to compute the proximity operator of ${\varphi}$, we may instead calculate the subdifferential $\partial\varphi(\bf 0)$, since the proximity operator of $L$ can be computed explicitly.

For any ${\bf y}:=(y_1,\ldots, y_{(d+1)N_e})^T\in \partial\varphi({\bf 0})$ with $y_i:=(y_{i,1}, y_{i,2})$,  we have that, from
the definition of the subdifferential and the fact that $\varphi({\bf 0})=0$, 
\[
({\bf y}, {\bf v})\le  \varphi({\bf v}),\ \ \mbox{for all}\ \  {\bf v}:=(v_1,\ldots, v_{(d+1)N_e})^T, \ v_i:=(v_{i,1}, v_{i,2}).
\]
In other words, to  calculate the subdifferential $\partial\varphi(\bf 0)$, we need to find  ${\bf y}$ satisfying the above inequality for all ${\bf v}$. By choosing ${\bf v}=(0,\ldots,v_i,0,\ldots,0)^T, 1\le i\le(d+1)N_e$ in the above inequality and using the definition of $\varphi$  in \eqref{varphi}, we obtain that
\begin{equation}\label{eq:0001}
     y_{i,1}v_{i,1}+y_{i,2}v_{i,2}\le   
     \int_{0}^{1} |v_{i,1}+v_{i,2}s|ds,\ \ \mbox{for all}\ \ (v_{i,1}, v_{i,2})\in\bR^2. 
\end{equation}
We need to solve inequality \eqref{eq:0001} for $y_{i,1}$ and $y_{i,2}$. We first compute the integral on the right-hand side 
of \eqref{eq:0001}.

\begin{lemma}\label{Lemma 8.1}
Let $a, b$ be two real numbers, and
$$
I(a,b):=\int_0^1|a+bs|ds.
$$
(1) If $ab\ge 0$, then 
$$
I(a,b)=|a|+\frac{|b|}{2}.
$$
(2) If $ab<0$ and $b>0$, then 
\begin{eqnarray}\label{new:8.5}
  I(a,b)=\left\{
  \begin{array}{ll}
     -a-\frac{b}{2},  &\ {\rm if}\  a+b\le 0,\\\
    a+\frac{b}{2}+\frac{a^2}{b},  &\ {\rm if}\ a+b> 0.
   \end{array}\right.
\end{eqnarray}
(3) If $ab<0$ and $b<0$, then
\begin{eqnarray}
  I(a,b)=\left\{
  \begin{array}{ll}
     a+\frac{b}{2},  &\ {\rm if}\  a+b\ge 0,\\\
    -a-\frac{b}{2}-\frac{a^2}{b},  &\ {\rm if}\ a+b< 0.
  \end{array}\right.
\end{eqnarray}
\end{lemma}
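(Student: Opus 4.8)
The plan is to compute $I(a,b)=\int_0^1|a+bs|\,ds$ by locating the unique zero $s^*=-a/b$ of the affine map $s\mapsto a+bs$ (when $b\neq 0$) and splitting the integral at $s^*$ exactly when $s^*$ lies in the open interval $(0,1)$. Throughout I will use the antiderivative $F(s):=as+\tfrac{b}{2}s^2$, so that $\int_\alpha^\beta (a+bs)\,ds=F(\beta)-F(\alpha)$ and in particular $F(0)=0$, $F(1)=a+\tfrac{b}{2}$.

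For part (1), when $ab\ge 0$ the function $a+bs$ keeps a constant sign on $[0,1]$ — it is nonnegative there if $a,b\ge 0$ and nonpositive there if $a,b\le 0$ (and constant if $b=0$) — so $I(a,b)=\bigl|F(1)-F(0)\bigr|=\bigl|a+\tfrac{b}{2}\bigr|$, and since $a$ and $b$ share a sign this equals $|a|+\tfrac{|b|}{2}$. For part (2), with $ab<0$ and $b>0$ we have $a<0<b$ and $s^*=-a/b>0$; moreover $s^*\ge 1$ is equivalent to $a+b\le 0$. If $a+b\le 0$, then $a+bs\le 0$ on all of $[0,1]$, whence $I(a,b)=-F(1)=-a-\tfrac{b}{2}$. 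If $a+b>0$, then $0<s^*<1$, the sign of $a+bs$ flips from $-$ to $+$ at $s^*$, and $I(a,b)=-(F(s^*)-F(0))+(F(1)-F(s^*))=F(1)-2F(s^*)$; a one-line computation gives $F(s^*)=-\tfrac{a^2}{2b}$, hence $I(a,b)=a+\tfrac{b}{2}+\tfrac{a^2}{b}$.

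For part (3) I would avoid repeating the work by observing that $|a+bs|=|(-a)+(-b)s|$, so $I(a,b)=I(-a,-b)$; when $ab<0$ and $b<0$, the pair $(-a,-b)$ satisfies the hypotheses of part (2), and substituting back — noting that the condition $a+b\ge 0$ corresponds to $(-a)+(-b)\le 0$ — yields the two stated formulas. I do not expect a genuine obstacle here, as the content is elementary calculus; the only points needing care are the bookkeeping of the sign conditions (in particular checking that the borderline case $s^*=1$, i.e.\ $a+b=0$, is handled consistently — both branch formulas in fact agree there) and the algebraic simplification $F(s^*)=-a^2/(2b)$.
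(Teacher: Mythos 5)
Your proof is correct and follows essentially the same route as the paper's: sign analysis of the affine function, splitting the integral at the zero $s^*=-a/b$ when it falls in $(0,1)$, and reducing case (3) to case (2) via the substitution $(a,b)\mapsto(-a,-b)$. The antiderivative notation $F$ is a minor cosmetic convenience; otherwise the two arguments coincide.
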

\begin{proof} Case (1) may be proved by a direct calculation.

We next consider case (2). In case (2), when $a+b\le 0$, we have that
$$
a+bs\leq a+b\leq 0, \ \ \mbox{for}\ \ 0\leq s\leq 1,
$$
since $b>0$. Thus, we obtain that 
$$
I(a,b)=-\int_0^1(a+bs)ds=-a-\frac{1}{2}b. 
$$
When $a+b>0$, we have that $0<-\frac{a}{b}<1$ and thus, for $0\leq s\leq -\frac{a}{b}$, $a+bs\leq 0$ and for $-\frac{a}{b}<s\leq 1$, $0<a+bs$. It follows that
$$
I(a,b)=\int_0^{-\frac{a}{b}}-(a+bs)ds+\int_{-\frac{a}{b}}^1(a+bs)ds=a+\frac{1}{2}b+\frac{a^2}{b}.
$$

In case (3),  we note that 
\[
   \int_0^1|a+bs|ds=\int_0^1|a'+b's|ds
\]
with $a':=-a<0$ and $b':=-b>0$. Then the conclusion in case (2) is valid with $a,b$ replaced by $a',b'$, which yields the result for case (3). 
\end{proof}


We next establish the main result of this section regarding the proximity operator of the convex function $\varphi$ defined by \eqref{varphi} with $k=1$.  To this end, we define 
\begin{equation}\label{omega0}
   \Omega_0:=\left\{(x,y)\in \bR^2: \frac{(1+x)^2}{4}-\frac 12\le y\le \frac 12 -\frac{(1-x)^2}{4},\ |x|\le 1\right\}. 
\end{equation}
Note that set $\Omega_0$ is nonempty because when $|x|\le 1$, 
\[
    \frac 12 -\frac{(1-x)^2}{4}-\left[\frac{(1+x)^2}{4}-\frac 12\right]=1-\frac{(1-x)^2+(1+x)^2}{4}\ge 0.
\]

\begin{proposition}
If $\varphi$ is the convex function defined by \eqref{varphi} with $k=1$, 
then for any  vector ${\bf v}:=(v_1,\ldots,v_{(d+1)N_e})^{T}$ with $v_i:=(v_{i,1},v_{i,2})$,
\begin{equation}\label{eq:112}
prox_{ \varphi}({\bf v})={\bf x}:=(x_1,\ldots, x_{(d+1)N_e})^T,
\end{equation}
where for each $i=1, 2, \dots, (d+1)N_e$,  $x_i:=(x_{i,1},x_{i,2})$ is given by
\begin{eqnarray}\label{eq:113}
x_i=\left\{
   \begin{array}{ll}
    (0,0), &  {\rm if}\ (v_{i,1},v_{i,2})\in\Omega_0 \vspace{1mm}  \\
     (v_{i,1},v_{i,2})-\argmin\limits_{(x,y)\in\Omega_0} (v_{i,1}-x)^2+(v_{i,2}-y)^2, &   {\rm otherwise},
                  \end{array}\right.
\end{eqnarray} 
with $\Omega_0$ being defined by \eqref{omega0}. 
\end{proposition}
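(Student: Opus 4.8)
The plan is to reduce the computation of ${\rm prox}_{\varphi}$ to a Euclidean projection onto one fixed planar convex set, and then to identify that set with $\Omega_0$. First I would use that $\varphi$ is separable across the $(d+1)N_e$ blocks: writing $\psi(a,b):=\int_0^1|a+bs|\,ds=I(a,b)$, one has $\varphi({\bf q})=\sum_{i=1}^{(d+1)N_e}\psi(q_i)$, and since the quadratic term $\tfrac12\|{\bf v}-{\bf q}\|^2=\sum_i\tfrac12\|v_i-q_i\|^2$ splits the same way, the minimization defining ${\rm prox}_{\varphi}({\bf v})$ decouples into $(d+1)N_e$ independent two-dimensional problems; hence ${\rm prox}_{\varphi}({\bf v})=\bigl({\rm prox}_{\psi}(v_1),\dots,{\rm prox}_{\psi}(v_{(d+1)N_e})\bigr)$. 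Next, since $\psi$ is positively homogeneous, the identity \eqref{prox:00} (stated there for any positively homogeneous convex function) applies to $\psi$ and gives $({\cal I}-{\rm prox}_{\psi})(v_i)={\rm prox}_{L_{\partial\psi({\bf 0})}}(v_i)$; as the proximity operator of the indicator function of a nonempty closed convex set is precisely the Euclidean projection onto that set (immediate from the definition of ${\rm prox}$), this reads ${\rm prox}_{\psi}(v_i)=v_i-P_{\partial\psi({\bf 0})}(v_i)$, where $P_{\partial\psi({\bf 0})}$ denotes that projection. Comparing with the asserted formula \eqref{eq:113}, the whole Proposition reduces to the single claim $\partial\psi({\bf 0})=\Omega_0$.

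The heart of the argument is thus to prove $\partial\psi({\bf 0})=\Omega_0$. By \eqref{eq:0001}, $(y_1,y_2)\in\partial\psi({\bf 0})$ exactly when $y_1v_1+y_2v_2\le I(v_1,v_2)$ for every $(v_1,v_2)\in\mathbb{R}^2$, and since both sides are positively homogeneous of degree one in $(v_1,v_2)$ it suffices to test this on normalized directions using the explicit formulas of Lemma~\ref{Lemma 8.1}. Taking $v_2=0$ gives $|y_1|\le1$; on the two quadrants where $v_1$ and $v_2$ have the same sign, Lemma~\ref{Lemma 8.1}(1) gives $I=|v_1|+\tfrac12|v_2|$, which forces the (ultimately redundant) bound $|y_2|\le\tfrac12$. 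The curved boundary pieces of $\Omega_0$ come from the mixed-sign directions: normalizing to $v_2=1$, $v_1=-a$ with $a>0$, Lemma~\ref{Lemma 8.1}(2) gives $I(-a,1)=a^2-a+\tfrac12$ for $0<a<1$ and $I(-a,1)=a-\tfrac12$ for $a\ge1$, so the requirement $-ay_1+y_2\le I(-a,1)$ becomes $y_2\le a^2-a(1-y_1)+\tfrac12$ for all $a\in(0,1)$ (the $a\ge1$ constraints turn out weaker, the comparison reducing to $(1+y_1)^2\ge0$). Minimizing the right-hand side over $a$, whose minimizer $a^*=(1-y_1)/2$ lies in $[0,1]$ exactly because $|y_1|\le1$, yields $y_2\le\tfrac12-\tfrac{(1-y_1)^2}{4}$; replacing $v$ by $-v$ and using that $I$ is even produces the mirror inequality $y_2\ge\tfrac{(1+y_1)^2}{4}-\tfrac12$. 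These are the defining inequalities of $\Omega_0$ in \eqref{omega0}, so $\partial\psi({\bf 0})\subseteq\Omega_0$; for the reverse inclusion I would run the same case split in the opposite direction, verifying that for $(y_1,y_2)\in\Omega_0$ the inequality $y_1v_1+y_2v_2\le I(v_1,v_2)$ holds on every direction class (using $y_1\le1$, $y_2\le\tfrac12$ on the equal-sign quadrants and the two parabolic inequalities on the mixed-sign ones). A more conceptual route, which I might use instead, is to recognize $\psi$ as the support function of the planar zonoid $\{\int_0^1\sigma(s)(1,s)\,ds:\ \sigma\colon[0,1]\to[-1,1]\ \text{measurable}\}$; by the bang-bang principle its boundary is swept out by the single-switch controls $\sigma={\bf 1}_{[0,t]}-{\bf 1}_{(t,1]}$ and their negatives, and computing $\bigl(\int_0^1\sigma,\ \int_0^1 s\,\sigma\bigr)$ for these recovers exactly the two parabolic arcs in \eqref{omega0}.

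Finally I would assemble the pieces. The set $\Omega_0$ is nonempty (as noted immediately after \eqref{omega0}), and it is closed and convex, being the region between the graph of the convex function $x\mapsto\tfrac{(1+x)^2}{4}-\tfrac12$ and that of the concave function $x\mapsto\tfrac12-\tfrac{(1-x)^2}{4}$ over $|x|\le1$; hence $P_{\Omega_0}$ is well defined and single-valued. If $v_i\in\Omega_0$ then $P_{\Omega_0}(v_i)=v_i$, so ${\rm prox}_{\psi}(v_i)=v_i-v_i=(0,0)$; otherwise ${\rm prox}_{\psi}(v_i)=v_i-P_{\Omega_0}(v_i)=(v_{i,1},v_{i,2})-\argmin_{(x,y)\in\Omega_0}\bigl\{(v_{i,1}-x)^2+(v_{i,2}-y)^2\bigr\}$. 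Stacking the blocks gives precisely \eqref{eq:112} and \eqref{eq:113}. The main obstacle is the identification $\partial\psi({\bf 0})=\Omega_0$: the sign cases of Lemma~\ref{Lemma 8.1} must be tracked carefully, and one must check that the two parabolic inequalities are simultaneously necessary and sufficient — in particular that, among the mixed-sign test directions, the constraints with $a\ge1$ add nothing beyond the curved ones. The remaining steps are routine.
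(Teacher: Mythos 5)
Your proof is correct and follows essentially the same approach as the paper: reduce ${\rm prox}_{\varphi}$ to ${\rm prox}_{L_{\partial\varphi({\bf 0})}}$ via the homogeneity identity \eqref{prox:00}, compute the subdifferential at ${\bf 0}$ componentwise from the defining inequality \eqref{eq:0001} and the explicit values $I(a,b)$ of Lemma~\ref{Lemma 8.1}, identify it with (a product of copies of) $\Omega_0$, and finish with the fact that the proximity operator of an indicator function is the Euclidean projection. Your organization is a bit cleaner than the paper's: you make the block separability explicit up front so that everything is genuinely a single two-dimensional calculation with $\psi=I$, and instead of the paper's Cases~1--3 on the signs of $(v_{i,1},v_{i,2})$ leading to the five intermediate condition sets \eqref{001}, \eqref{yy06}, \eqref{yy03}, \eqref{yy08}, \eqref{002}, you normalize the test direction, minimize the constraint over the scalar parameter $a$, and verify directly that the $a\ge1$ branch adds nothing (reducing to $(1+y_1)^2\ge0$); this gives the two parabolic inequalities and $|y_1|\le1$ in one pass. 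Both you and the paper leave the reverse inclusion $\Omega_0\subseteq\partial\psi({\bf 0})$ at the level of a sketch, so there is no meaningful difference in rigor there. The alternative zonoid/support-function observation you mention is a nice conceptual cross-check but is not what the paper does.
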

\begin{proof}
In light of \eqref{prox:00}, we proceed our proof in three steps: First, we calculate $\partial\varphi({\bf 0})$, the subdifferential of $\varphi$ at the  zero  vector and then we compute the proximity operator of the indicate function $L_{\partial\varphi({\bf 0})}$ according to \eqref{LL}, and finally we obtain the proximity operator of $\varphi$ by using \eqref{prox:00} and the proximity operator of $L_{\partial\varphi({\bf 0})}$.

For any ${\bf y}:=(y_1,\ldots, y_{(d+1)N_e})^T\in \partial\varphi({\bf 0})$, with $y_i:=(y_{i,1}, y_{i,2})$, each component-wise element $y_i$ satisfies the inequality \eqref{eq:0001}. In other words, the calculation of the subdifferential of $\varphi$ at the  zero  vector is to find all $y_i$ satisfying \eqref{eq:0001} for all $(v_{i,1},v_{i,2})\in {\mathbb R}^2$. The right-hand-side of inequality \eqref{eq:0001} is identified as $I(v_{i,1},v_{i,2})$, which may be computed according to Lemma \ref{Lemma 8.1}. Following Lemma \ref{Lemma 8.1} we consider three cases.

{\bf Case 1: $v_{i,1}v_{i,2}\ge 0$.} In this case, from Lemma \ref{Lemma 8.1} we have that $I(v_{i,1},v_{i,2})= |v_{i,1}|+\frac{1}{2}|v_{i,2}|$. It follows from \eqref{eq:0001} that 
\[
v_{i,1}y_{i,1}+v_{i,2}y_{i,2}\le  |v_{i,1}|+\frac{1}{2}|v_{i,2}|, \  \mbox{for all} \ (v_{i,1}, v_{i,2})\in \bR^2 \ \mbox{with}\ v_{i,1}v_{i,2}\ge 0. 
\]
Solving the above inequality we obtain that 
\begin{equation}\label{001}
    |y_{i,1}|\le 1,\ \  |y_{i,2}|\le 1/2.
\end{equation}

{ \bf Case 2: $v_{i,1}<0, v_{i,2}>0$. } In this case, by using \eqref{eq:0001} and taking  $a=v_{i,1},b=v_{i,2}$ in \eqref{new:8.5} of Lemma \ref{Lemma 8.1}, we have that 
\begin{eqnarray}\label{8.10}
  v_{i,1}y_{i,1}+v_{i,2}y_{i,2} \le\left\{
  \begin{array}{ll}
     -v_{i,1}-\frac{v_{i,2}}{2},  &\ {\rm if}\  v_{i,1}+v_{i,2}\le 0,\\\
    v_{i,1}+\frac{v_{i,2}}{2}+\frac{(v_{i,1})^2}{v_{i,2}},  &\ {\rm if}\ v_{i,1}+v_{i,2}> 0.
   \end{array}\right.
\end{eqnarray}
We solve  $y_{i,1}$ and $y_{i,2}$ from the above inequality for all $v_{i,1}<0, v_{i,2}>0$   in  two cases: $v_{i,1}+v_{i,2}\le 0$ and $v_{i,1}+v_{i,2}> 0$. 
    
When $v_{i,1}+v_{i,2}\le 0$, the right-hand-side 
 of \eqref{8.10} is $-v_{i,1}-\frac{v_{i,2}}{2}$, which gives the following inequality
\[
   v_{i,1}(y_{i,1}+1)+v_{i,2}(y_{i,2}+\frac{1}{2})\le 0,\ \  {\rm for\ all}\ \ 0< v_{i,2}\le -v_{i,1}. 
\]
This is equivalent to 
\[
\sup_{0<v_{i,2}\le -v_{i,1} }\left(v_{i,1}(y_{i,1}+1)+v_{i,2}(y_{i,2}+\frac{1}{2})\right)\le 0. 
 \]
We first solve the above inequality for a fixed $v_{i,1}$. To this end, we  define a linear function
\[
g_1(x):=(y_{i,2}+\frac{1}{2})x+v_{i,1}(y_{i,1}+1),\ \ x\in [0,-v_{i,1}].
\]
Then the last inequality implies that 
\[
\sup_{0<x\le -v_{i,1} } g_1(x) \le 0. 
\]  
Now we consider the maximal value of the function $g_1$ over the interval $[0,-v_{i,1}].$ When $y_{i,2}+\frac{1}{2}\ge 0$,  
$g_1$ is monotone increasing  and thus, it has a maximal value at the point $x=-v_{i,1}$. Consequently, we have that  $g_1(-v_{i,1})\le 0$, that is,
\[
   v_{i,1}(y_{i,1}+1)-v_{i,1}(y_{i,2}+\frac{1}{2})\le 0,\ \ {\rm for\ all}\ v_{i,1}<0. 
\]
Solving this inequality, we get that
\begin{equation} \label{yy01}
   y_{i,1}+\frac{1}{2}-y_{i,2} \ge  0,\ \ y_{i,2}+\frac 12\ge 0.
\end{equation}
When $y_{i,2}+\frac{1}{2}< 0$, $g_1$ is monotone decreasing and thus, it has a maximal value at  $x=0$. Therefore,  
\[
    \sup_{0<x\le -v_{i,1} } g_1(x)=g_1(0)\le 0. 
\]
This gives rise to  
\[
v_{i,1}(y_{i,1}+1)\le 0,\ \ \mbox{for all}\ \ v_{i,1}<0.
\]
Solving the above inequality we get that 
 \begin{equation}\label{yy001}
     y_{i,1}+1\ge 0,\ \ y_{i,2}+\frac 12< 0. 
 \end{equation}
Combining \eqref{yy01} and \eqref{yy001}, we conclude that when $v_{i,1}+v_{i,2}\le 0$, the vectors $(y_{i,1}, y_{i,2})$ satisfying inequality  \eqref{8.10} have the form
\begin{eqnarray}\label{yy06}
   y_{i,1}\ge\left\{
  \begin{array}{ll}
     -1,  &\ {\rm if}\    y_{i,2}< -\frac 12,\\
    y_{i,2}-\frac 12,  &\ {\rm if}\  y_{i,2}\ge  -\frac 12 
   \end{array}\right. 
\end{eqnarray} 
  
When $v_{i,1}+v_{i,2}>0$,  we have from \eqref{8.10} that 
\[
   v_{i,1}y_{i,1}+v_{i,2}y_{i,2}\le 
   v_{i,1}+\frac{v_{i,2}}{2}+\frac{v_{i,1}^2}{v_{i,2}}.
\]
This yields the inequality
\begin{equation}\label{eqq:00}
v_{i,2}^2(\frac{1}{2}-y_{i,2})+v_{i,1}v_{i,2}(1-y_{i,1})+v_{i,1}^2\ge 0, \ \  \mbox{for all}\ \ -v_{i,2}< v_{i,1}< 0.
\end{equation}
Define 
\[
g_2(x):=x^2+v_{i,2}(1-y_{i,1})x+v_{i,2}^2(\frac{1}{2}-y_{i,2}),\ \ x\in [-v_{i,2},0].
\]
Hence, solving \eqref{eqq:00} is equivalent to solving
\begin{equation}\label{yy02}
    \inf_{-v_{i,2}< x<0}g_2(x)\ge 0.
\end{equation}
Clearly, the quadratic curve determined by the function $g_2$ has its minimizer at $x^*:=v_{i,2}(y_{i,1}-1)/2$. If $x^*\in [-v_{i,2},0]$, then
\eqref{yy02} is equivalent to
\begin{equation}\label{yy02-0}
    \inf_{-v_{i,2}< x<0}g_2(x)=g_2(v_{i,2}(y_{i,1}-1)/2)\ge 0.
\end{equation}
Solving  \eqref{yy02-0}, we obtain that
\begin{equation}\label{yy07}
\frac{1}{2}-y_{i,2}-\frac{1}{4}(1-y_{i,1})^2\ge 0,\ \ 
-1\le y_{i,1}\le 1.
\end{equation}
If $x^*\notin [-v_{i,2},0]$, then either $x^*< -v_{i,2}$, which is equivalent to $y_{i,1}< -1$, or $x^*>0$, which is equivalent to  $y_{i,1}> 1$.
In the first case, $g_2$ is monotone increasing over $[-v_{i,2},0]$ and thus 
\[
   \inf_{-v_{i,2}< x<0}g_2(x)=g_2(-v_{i,2})\ge 0.
\]
Solving the above inequality we obtain that
\begin{equation}\label{yy04}
    y_{i,1}+\frac 12-y_{i,2}\ge 0,\ \ y_{i,1}< -1. 
\end{equation}
In the second case, $g_2$ is monotone decreasing over $[-v_{i,2},0]$ and thus 
\[
\inf_{-v_{i,2}< x<0}g_2(x)=g_2(0)\ge 0,
\]
which yields 
\begin{equation}\label{yy05}
\frac 12-y_{i,2}\ge 0,\ \ y_{i,1}\ge 1. 
\end{equation}
   
Combining \eqref{yy07}, \eqref{yy04} and \eqref{yy05}, we get that when $v_{i,1}+v_{i,2}>0$,
\begin{eqnarray}\label {yy03}
   y_{i,2}\le\left\{
  \begin{array}{ll}
     \frac{1}{2}-\frac{1}{4}(1-y_{i,1})^2,  &{\rm if}\  -1\le y_{i,1}\le 1,\\
    y_{i,1}+\frac 12,  &\ {\rm if}\  y_{i,1}<  -1,\\
    \frac 12,  &\ {\rm if}\  y_{i,1}>  1
   \end{array}\right. 
\end{eqnarray}

 { \bf Case 3: $v_{i,2}<0, v_{i,1}>0$.}
In this case, note that   $-v_{i,1}<0, -v_{i,2}>0$, and \eqref{eq:0001} can be rewritten as
 \[
    -v_{i,1}(-y_{i,1})-v_{i,2}(-y_{i,2})\le I(v_{i,1},v_{i,2})=I(-v_{i,1},-v_{i,2}).
 \]
By employing the conclusion in Case 2, we obtain that  
\begin{eqnarray}\label{yy08}
   -y_{i,1}\ge\left\{
  \begin{array}{ll}
     -1,  &\ {\rm if}\    y_{i,2}> \frac 12,\\
    -y_{i,2}-\frac 12,  &\ {\rm if}\  y_{i,2}\le \frac 12,  
   \end{array}\right. 
\end{eqnarray} 
and 
\begin{eqnarray}\label{002}
   -y_{i,2}\le\left\{
  \begin{array}{ll}
     \frac{1}{2}-\frac{1}{4}(1+y_{i,1})^2,  &\ {\rm if}\  -1\le y_{i,1}\le 1,\\
    -y_{i,1}+\frac 12,  &\ {\rm if}\  y_{i,1}> 1,\\
    \frac 12,  &\ {\rm if}\  y_{i,1}< -1. 
   \end{array}\right. 
 \end{eqnarray}

Note that \eqref{eq:0001} is valid for all $v_{i,1},v_{i,2}$. This requires that $y_{i,1}$, $y_{i,2}$ must satisfy all of \eqref{001}, \eqref{yy06}, \eqref{yy03},  \eqref{yy08} and \eqref{002}. Thus, we conclude that  
\begin{equation*}
-\frac 12 +\frac{1}{4}(1+y_{i,1})^2\le y_{i,2}\le \frac 12 -\frac{1}{4}(1-y_{i,1})^2,\ \  |y_{i,1}|\le 1.
\end{equation*}
Consequently, the subdifferential of $\varphi$ at the  zero  vector can be represented as 
\[
\partial\varphi({\bf 0})=\{ {\bf y}:=(y_1,\ldots, y_M)^T:  y_i:=(y_{i,1}, y_{i,2})\in\Omega_0 \}.
\]

We next calculate the proximity operator of the indicator function $L_{\partial\varphi(\bf 0)}$. 
Given a ${\bf v}:=(v_1,\ldots,v_{(d+1)N_e})^{T}$ with $v_i:=(v_{i,1},v_{i,2})$, we let 
${\bf z}:={\rm prox}_{L_\partial\varphi(\bf 0)}({\bf v})$,
where
${\bf z}:=(z_1,\ldots, z_{(d+1)N_e})^T$, with $z_i:=(z_{i,1}, z_{i,2})$.
Recalling the definition of $L_{\partial\varphi(\bf 0)}$ in \eqref{LL} and its proximity operator, we conclude that the proximity operator of the indicator function $L_{\partial\varphi(\bf 0)}$ at ${\bf v}$ is the $L^2$ projection of ${\bf v}$ onto the set $\Omega_0$. This implies that 
\begin{eqnarray*}
z_i&=&\left\{
\begin{array}{ll}
    (v_{i,1},v_{i,2}), &  {\rm if}\ (v_{i,1},v_{i,2})\in\Omega_0 \vspace{1mm}  \\
     \argmin\limits_{(x,y)\in\Omega_0,} \{(v_{i,1}-x)^2+(v_{i,2}-y)^2\}, &   {\rm otherwise}.
\end{array}\right.
\end{eqnarray*}

Finally, by using formula \eqref{prox:00}, we have that
\[
    {\rm prox}_{ \varphi}({\bf v})={\bf v}-{\rm prox}_{L_\partial\varphi(\bf 0)}({\bf v})={\bf v}-{\bf z}.
\]
Therefore,   \eqref{eq:112} and \eqref{eq:113}  follow immediately. 
\end{proof}

Computing the proximity operator of function $\varphi$ with a general order $k\ge 2$ requires further investigation. Below, we describe an approximation of  ${\rm prox}_{\varphi}$ for general $k$.
It was shown in \cite{jing-li-xu} that  for any ${\bf q}:=(q_1,\ldots, q_{(d+1)N_e})^T$ with $ q_i:=(q_{i,1},\ldots,  q_{i,k+1})$,
there exist positive $\gamma,\eta$ such that
\[
\gamma \|{\bf q}\|_* \le {\varphi}({\bf q})\le \eta \|{\bf q}\|_*,\ \  {\rm with}\  \|{\bf q}\|_*:=\sum_{i=1}^{(d+1)N_e}\sum_{m=0}^{k}\frac{1}{m+1} |q_{i,m+1}|.
\]
A re-scaling with $\bar {\bf q}:=(\bar q_1,\ldots,\bar q_{(d+1)N_e})^T$, $\bar q_i:=(\bar q_{i,1},\bar q_{i,2},\ldots, \bar q_{i,k+1})$, $\bar q_{i,j}=\frac{1}{j} q_{i,j}$, for $j=1,2,\dots, k+1$ yields that
\[
\gamma \|{\bar {\bf q}}\|_{l_1} \le {\varphi}({\bf {\bf q}})\le \eta \|{\bar {\bf q}}\|_{l_1}. 
\]
In other words, the function $\varphi$ at the vector ${\bf q}$ is equivalent to the $\ell_1$ norm of ${\bar {\bf q}}$. Noting that the proximity operator of the $\ell_1$ norm can be computed by using \eqref{app:1}-\eqref{app:2}, we may use the proximity operator of the $\ell_1$ norm to approximate the proximity operator of ${\varphi}$.



\section{Numerical Results}
In this section, we present three numerical examples to verify the theoretical findings in the previous sections. We use the primal-dual weak Galerkin method \eqref{min:uh-new} based on the $L^p$ stabilizer
to solve the elliptic equation \eqref{poission} in two-dimensional setting for $p=1,2,\infty$.  Without loss of generality, we test $k=2$ in our numerical experiments. That is, the
  FE space $V_h$ in \eqref{Vh} is given by
 \[
   V_h:=\{v:=\{v_0,v_b, {\bf v}_g\}: \{v_0,v_b, {\bf v}_g\}|_{T}\in P_2(T)\times P_2(e)\times [P_{1}(e)]^2,\ e\in \partial T, T\in {\cal T}_h \}.
\]
The dual space $W_h$ for the Lagrange multiplier is chosen as
\[
      W_h:=\{\sigma: \sigma|_{T}\in P_{1},\ \ \forall T \in{\cal T}_h\}.
\]
We take the domain $\Omega:=[0,1]\times [0,1]$, and obtain our the triangular partitions by successively applying a uniform refinement procedure that
divides each coarse level thriangel into four congruent sub-triangles by connecting the tree mid-points on the edges of each triangle.
In our numerical experiment, we test various errors including  the discrete $W^{2,p}$-norm (defined in \eqref{discrete-H2}), $W^{1,p}$- and
$L^p$-norms.

  \subsection{Numerical results for continuous constant coefficients}
    We first consider the problem \eqref{poission}  with continuous constant coefficients. That is,  the coefficients $a_{ij}$  is taken as  following:
  \[
     a_{11}=1,\ \ a_{12}=a_{21}=1,\ \ a_{22}=6.
  \]
   The right-hand side function is chosen such that the exact solution is
  \[
     u(x,y)=\sin(\pi x)\sin(\pi y).
  \]
    List in Table \ref{1} are the approximation errors and   convergence rates for the WG solution  based on $L^p$ stabilizer with $p=1,2,\infty$.
    As we may observe, the convergence rate of $\|u-u_h\|_{0,p}$ and $\|u-u_h\|_{1,p}$ is separately $k+1$ and $k$ for all $p=1,2,\infty$, which are consistent with
    the results \eqref{es:2} and \eqref{est:3} in Theorem \ref{theo:1} and Theorem \ref{theo:2}.  As for the discrete $W^{2,p}$ norm, the convergence rate is $k-1$ for both $p=1$ and $p=2$.
    While for $p=\infty$, it seems that the convergence rate can arrive at $k$, $1$ order higher than the error estimate given by \eqref{EQ:Feb25:100}.

  \begin{table}[htbp]\caption{Various errors and  convergence rates based on the $L^p$ stabilizer for continuous constant coefficients.}\label{1} \centering
\begin{threeparttable}
        \begin{tabular}{c |c c c c c c c }
        \hline
    & N &   $\|u-u_h\|_{2,p,h}$ & order& $\|u-u_h\|_{1,p}$  & order & $\|u-u_h\|_{0,p}$ & order   \\
       \hline \cline{2-8}
& 4&  5.64e-00 &  -- &  2.56e-01  & -- & 1.17e-02  & --\\
 & 8&   2.79e-00 &  1.04&  4.70e-02 &  2.45&  1.08e-03  & 3.44 \\
$p=1$ & 16&  1.42e-00 & 0.99  & 9.87e-03   &2.25 & 1.07e-04 &  3.33 \\
& 32&   7.19e-01 &  0.98&   2.34e-03  & 2.07  & 1.24e-05 &  3.11 \\
& 64&   3.62e-01 &  0.99&   5.77e-04  & 2.02  & 1.51e-06 &  3.04 \\
 \hline \cline{2-8}
               & 4&    1.45e-00  & -- &  1.05e-00 & -- &  4.60e-02&    -- \\
                & 8&     5.69e-01 &  1.35 &  1.80e-01 &  2.54&   4.20e-03 & 3.45 \\
$p=\infty$ & 16&    1.65e-01  & 1.79 &  4.19e-02 &  2.10  &4.03e-04   &3.38 \\
               & 32&     4.31e-02 &  1.94&  1.06e-02  & 1.99 & 4.99e-05   &3.01 \\
                & 64&     1.09e-02 &  1.98&  2.64e-03  & 2.00 & 6.22e-06 & 3.00\\
 \hline \cline{2-8}
                  & 4&    8.41e-01&   --  & 1.72e-01 &  -- &  1.12e-02  & -- \\
                & 8&    4.99e-01 &  0.75  & 3.42e-02  & 2.34 &  1.09e-03  & 3.36 \\
       $p=2$  & 16& 2.62e-01  & 0.93   &8.18e-03  & 2.06   &1.28e-04 &  3.09 \\
                & 32&    1.34e-01 & 0.97&  2.04e-03  & 2.01& 1.59e-05  & 3.00  \\
                &64&   6.74e-02 & 0.99 & 5.09e-04  & 2.00& 2.00e-06  & 3.00  \\
 \hline
 \end{tabular}
 \end{threeparttable}
\end{table}

 \subsection{Numerical results for continuous variable coefficients}
   We now suppose  coefficients $a_{ij}$ in  \eqref{poission}  are variable functions with
  \[
     a_{11}=1+x,\ \ a_{12}=a_{21}=0.5xy,\ \  a_{22}=1+y.
  \]
  We still choose a right-hand side function $f$ such that the exact solution is
  \[
     u(x,y)=\sin(\pi x)\sin(\pi y).
  \]

  We present in Table \ref{2}  the numerical results for the problem \eqref{poission}  with continuous variable coefficients.
  From Table \ref{2},
    we  can observe similar convergence rates as those for continuous constant coefficients. That is, we see an optimal convergence order of
    $k+1$ and $k$ for the $L^p$ and $W^{1,p}$ norm, respectively for all $p=1,2,\infty$, and an optimal convergence rate $k-1$ for the discrete  $W^{2,p}$ norm
   for $p=1,2$. All the numerical results confirm the theoretical findinds given in \eqref{EQ:Feb25:100}, \eqref{es:2} and \eqref{est:3}. Again, we observe a convergence  order of $k$ for
   the error $\|u-u_h\|_{2,p,h}$ based on the
    $L^{\infty}$ stabilizer,  which is $1$ order higher than the  optimal convergence rate $k-1$.

 \begin{table}[htbp]\caption{Various errors and  convergence rates based on the $L^p$ stabilizer for continuous variable coefficients.}\label{2}\centering
\begin{threeparttable}
        \begin{tabular}{c |c c c c c c c }
        \hline
    & N &   $\|u-u_h\|_{2,p,h}$ & order& $\|u-u_h\|_{1,p}$  & order & $\|u-u_h\|_{0,p}$ & order   \\
       \hline \cline{2-8}
                       & 4  &2.32e-00  & --  &   2.25e-01 &  --  & 1.02e-02 &  --\\
  $p=1$ &  8   &1.02e-00 &  1.19 &  4.87e-02  & 2.21   &1.14e-03 &  3.16\\
                      & 16  & 4.79e-01 &  1.08 &  1.14e-02  & 2.10  & 1.35e-04&   3.07\\
                     &  32  &2.38e-01  & 1.01&   2.78e-03 &  2.04   &1.73e-05  & 2.97\\
                      & 64 &1.19e-01  & 0.99&   6.88e-04 &  2.01   &2.46e-06  & 2.97\\
 \hline \cline{2-8}
               & 4 & 1.90e-00   &--&  7.23e-01 &  --  & 3.76e-02  & -- \\
              & 8 &  5.41e-01 & 1.81 & 1.54e-01  & 2.22  & 3.98e-03  & 3.24  \\
$p=\infty$ & 16 &  1.40e-01   &1.96  & 4.02e-02&   1.95&  4.69e-04  & 3.08  \\
              & 32&   3.44e-02   &2.01  & 1.02e-02  & 1.98  & 5.76e-05  & 3.02\\
              & 64  &   8.27e-03 &  2.06  & 2.55e-03  & 1.99  & 8.16e-06  & 2.82 \\
 \hline \cline{2-8}
               & 4 &8.47e-01  &  -- &  1.68e-01   & --   & 1.19e-02   & -- \\
   $p=2$ & 8  & 4.95e-01  &  0.75  &  3.98e-02   & 2.08   &1.40e-03   & 3.08\\
               & 16&  2.60e-01   & 0.93   &9.85e-03    &2.02 & 1.73e-04   & 3.02\\
                & 32 &   1.33e-01    &0.97   & 2.46e-03  &  2.00 &  2.23e-05   & 2.96\\
                & 64 &   6.70e-02    &0.99   & 6.14e-04  &  2.00 &  3.13e-06   & 2.83\\
 \hline
 \end{tabular}
 \end{threeparttable}
\end{table}

 \subsection{Numerical results for discontinuous  coefficients}
   In this subsection, we consider \eqref{poission} with discontinuous coefficients, which is  given by
  \[
      a_{11}=2,\ \  a_{12}=a_{21}=\frac{x-0.5}{|x-0.5|}\frac{y-0.5}{|y-0.5|},\ \  a_{22}=2.
  \]
    The right-hand side function $f$ is chosen such that the exact solution is
  \[
     u(x,y)=xy(1-e^{1-x}) (1-e^{1-y}).
  \]
   The computational results are given in Table \ref{3}.

  From Table \ref{3} we observe that the error $\|u-u_h\|_{2,p,h}$ and $W^{1,p}$ converges separately with order
  $k-1$ and $k$, for all $p=1,2,\infty$, which are consistent with the results in Theorems \ref{theo:001} and \ref{theo:2}.
  Note that $a_{12}$ and $a_{21}$ in this case  is discontinuous across the line $x=0.5$ and $y=0.5$.
  As expected, we do not see the optimal convergence rate for the  error $\|u-u_h\|_{0,p}$.

 \begin{table}[htbp]\caption{Various errors and  convergence rates based on the $L^p$ stabilizer for discontinuous  coefficients.}\label{3}\centering
\begin{threeparttable}
        \begin{tabular}{c |c c c c c c c }
        \hline
    & N &   $\|u-u_h\|_{2,p,h}$ & order& $\|u-u_h\|_{1,p}$  & order & $\|u-u_h\|_{0,p}$ & order   \\
       \hline \cline{2-8}
              & 4 &5.79e-01 &  --  & 2.63e-02 &  -- & 1.60e-03 &  --\\
      & 8 &   3.29e-01  & 0.81 &  5.45e-03  & 2.27 &  3.08e-04 &  2.37\\
$p=1$    & 16&  1.68e-01  & 0.97   &1.18e-03  & 2.20&  5.36e-05  & 2.52\\
           &32 &   8.54e-02 &  0.99&   2.67e-04  & 2.14 & 8.42e-06  & 2.67\\
            &64 &   4.30e-02 &  0.99&   6.30e-05  & 2.09 & 1.23e-06  & 2.77\\
  \hline \cline{2-8}
                    & 4 & 1.61e-01  & --&  1.03e-01  & --& 4.62e-03  & --\\
                 & 8 & 4.55e-02  & 1.83  & 3.10e-02  & 1.73& 1.05e-03  & 2.13\\
$p=\infty$ & 16 & 1.71e-02   &1.41& 8.52e-03  & 1.87  & 1.93e-04   &2.45\\
                   & 32&   7.49e-03  & 1.19 & 2.23e-03 &  1.94 & 3.16e-05 &  2.60\\
                    & 64&  3.51e-03  & 1.09 &5.68e-04 &  1.97 & 4.84e-06 &  2.70\\
     \hline \cline{2-8}
             & 4&  1.10e-01  & --  & 1.88e-02 &  --  & 1.30e-03 &  --\\
              & 8&  6.65e-02  & 0.73  & 4.31e-03&  2.12 &2.32e-04 &  2.49\\
    $p=2$    & 16& 3.59e-02  & 0.89& 1.01e-03  & 2.09&  4.03e-05   &2.52\\
           & 32&  1.85e-02  & 0.95 &  2.43e-04&   2.06&  6.56e-06 &  2.62\\
            & 64&  9.41e-03  & 0.98 &  5.90e-05&   2.04&  1.05e-06 &  2.64\\
   \hline
 \end{tabular}
 \end{threeparttable}
\end{table}

\vfill\eject

\end{document}